\renewcommand{\setminus}{{\smallsetminus}}
\newcommand{\bp}{\begin{pmatrix}}
\newcommand{\ep}{\end{pmatrix}}
\newcommand{\be}{\begin{equation}}
\newcommand{\ee}{\end{equation}}
\newcommand{\ol}[1]{\overline{#1}}
\newcommand{\id}{\operatorname{Id}}
\numberwithin{equation}{section}
\theoremstyle{plain}
\newtheorem{theorem}[equation]{Theorem}
\newtheorem{lemma}[equation]{Lemma}
\newtheorem{proposition}[equation]{Proposition}
\newtheorem{corollary}[equation]{Corollary}
\newtheorem*{question}{Question}
\newtheorem*{cor:m-doesnotdivide-n}{Corollary~\ref{cor:m-doesnotdivide-n}}
\newtheorem*{cor:examples}{Corollary~\ref{cor:examples}}
\newtheorem*{cor:smooth-vs-top}{Corollary~\ref{cor:smooth-vs-top}}
\newtheorem*{cor:infinitelynshakeslice}{Corollary~\ref{cor:infinitelynshakeslice}}
\newtheorem*{cor:infinitelymanyhomcobsurgeries}{Corollary~\ref{cor:infinitelymanyhomcobsurgeries}}
\newtheorem*{cor:1isweird}{Corollary~\ref{cor:1isweird}}
\newtheorem*{prop:torus}{Proposition~\ref{prop:torus}}
\newtheorem*{prop:shaking-num}{Proposition~\ref{prop:shaking-num}}
\newtheorem{thm}[equation]{Theorem}
\newtheorem{cor}[equation]{Corollary}
\newtheorem{lem}[equation]{Lemma}
\newtheorem{prop}[equation]{Proposition}
\theoremstyle{definition}
\newtheorem{example}[equation]{Example}
\newtheorem{definition}[equation]{Definition}
\newtheorem{remark}[equation]{Remark}
\theoremstyle{remark}
\numberwithin{equation}{section}
\def\Z{\mathbb Z}
\def\R{\mathbb R}
\def\Q{\mathbb Q}
\def\C{\mathbb C}
\def\wt#1{\widetilde{#1}}
\def\sm{\setminus}
\def\bp{\begin{pmatrix}}
\def\ep{\end{pmatrix}}
\def\ba{\begin{array}}
\def\ea{\end{array}}
\def\bn{\begin{enumerate}}
\def\en{\end{enumerate}}
\def\CP{\mathbb{CP}}
\def\RP{\mathbb{RP}}
\DeclareMathOperator\Arf{Arf}
\DeclareMathOperator\lk{lk}
\DeclareMathOperator\mult{mult}
\DeclareMathOperator\Wh{Wh}
\DeclareMathOperator\Hom{Hom}
\DeclareMathOperator\Id{Id}
\DeclareMathOperator\GL{GL}
\DeclareMathOperator\pt{pt}
\DeclareMathOperator\fr{fr}
\DeclareMathOperator\ks{ks}
\DeclareMathOperator\BO{BO}
\DeclareMathOperator\BTOP{BTOP}
\newcommand{\gs}{g^{\operatorname{top}}_4}
\newcommand{\galg}{g_{\operatorname{alg}}}
\newcommand{\gZ}{g_4^{\Z}}
\newcommand{\ts}{\textsuperscript}
\begin{document}

\title[Embedding spheres in knot traces]{Embedding spheres in knot traces}

\author[Feller]{Peter Feller}
\address{Department of Mathematics, ETH Z\"{u}rich, Switzerland}
\email{peter.feller@math.ethz.ch}

\author[Miller]{Allison N. Miller}
\address{Department of Mathematics, Rice University, Houston, Texas, USA}
\email{allison.miller@rice.edu}

\author[Nagel]{Matthias Nagel}
\address{Department of Mathematics, ETH Z\"{u}rich, Switzerland}
\email{matthias.nagel@math.ethz.ch}

\author[Orson]{\\ Patrick Orson}
\address{Department of Mathematics, Boston College, Chestnut Hill, USA}
\email{patrick.orson@bc.edu}

\author[Powell]{Mark Powell}
\address{Department of Mathematical Sciences, Durham University, United Kingdom}
\email{mark.a.powell@durham.ac.uk}

\author[Ray]{Arunima Ray}
\address{Max-Planck-Institut f\"{u}r Mathematik, Bonn, Germany}
\email{aruray@mpim-bonn.mpg.de }

\def\subjclassname{\textup{2020} Mathematics Subject Classification}
\expandafter\let\csname subjclassname@1991\endcsname=\subjclassname
\subjclass{
57K40, 
57K10, 
57N35, 
57N70, 
57R67. 
}
\keywords{shake slice, locally flat embedding, Arf invariant, Tristram-Levine signatures, Alexander polynomial.}

\begin{abstract}
The trace of the $n$-framed surgery on a knot in $S^3$ is a 4-manifold homotopy equivalent to the 2-sphere. We characterise when a generator of the second homotopy group of such a manifold can be realised by a locally flat embedded $2$-sphere whose complement has abelian fundamental group. Our characterisation is in terms of classical and computable $3$-dimensional knot invariants. For each $n$, this provides conditions that imply a knot is topologically $n$-shake slice, directly analogous to the result of Freedman and Quinn that a knot with trivial Alexander polynomial is topologically slice.
\end{abstract}

\maketitle

\section{Introduction}

\begin{question}
  Let $M$ be a compact topological $4$-manifold and let $x \in \pi_2(M)$. Can $x$ be represented by a locally flat embedded $2$-sphere?
\end{question}

Versions of this fundamental question have been studied by many authors, such as~\cite{kervaire-milnor,  tristram, rochlin, hsiang-szczarba}. The seminal work of Freedman and Quinn~\cite{F, FQ} provided new tools with which to approach this problem. In independent work of Lee-Wilczy\'{n}ski~\cite[Theorem~1.1]{lee-wilczynski} and Hambleton-Kreck~\cite[Theorem~4.5]{hambleton-kreck:1993}, the methods of topological surgery theory were applied to provide a complete answer for simply connected, closed 4-manifolds, in the presence of a natural fundamental group restriction. That is, they classified when an element of the second homotopy group of such a 4-manifold can be represented by a locally flat embedded sphere whose complement has abelian fundamental group. Lee-Wilczy\'{n}ski~\cite{lee-wilczynski-AJM} later generalised their theorem to apply to simply connected, compact $4$-manifolds with homology sphere boundary.  In this article, we expand our understanding to another general class of 4-manifolds with boundary.

Our main result is an answer to the sphere embedding question for $4$-manifolds called \emph{knot traces}, with $x$ a generator of the second homotopy group.
Let $\nu K$ be an open tubular neighbourhood of a knot $K$ in $S^3$ and let $n$ be an integer. The \emph{$n$-framed knot trace} $X_n(K)$ is the smooth 4-manifold obtained by attaching a 2-handle $D^2\times D^2$ to the 4-ball along $\nu K\subset S^3$, using framing coefficient $n$ and smoothing corners.
The boundary of $X_n(K)$ is the \emph{$n$--framed surgery} $S_n^3(K) := (S^3 \sm \nu K) \cup D^2 \times S^1$, where $\partial D^2 \times \{1\}$ is attached to the $n$-framed longitude of $K$.


\begin{theorem}\label{theorem:Zn-shake-slice-thm}
 Let $K$ be a knot in $S^3$ and let $n$ be an integer.
A generator of~$\pi_2(X_n(K))$ can be represented by a locally flat embedded $2$-sphere whose complement has
abelian fundamental group if and only if:
  \begin{enumerate}[(i)]
    \item\label{item:trivialh1}
    $H_1(S_n^3(K);\Z[\Z/n])=0$; or  equivalently for $n \neq 0$,
     $\prod_{\{\xi  \mid \xi^n=1\} }\Delta_K(\xi) =1$;
    \item\label{item:trivialarf} $\Arf(K)=0$; and
    \item\label{item:signaturesobstruct} $\sigma_K(\xi) =0$ for every $\xi \in S^1$ such that $\xi^n=1$.
     \end{enumerate}
\end{theorem}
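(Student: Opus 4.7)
My plan is to split the proof into necessity (forward) and sufficiency (reverse) directions, with the latter being the substantive part.

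For necessity, I would assume $\Sigma \subset X_n(K)$ is a locally flat embedded $2$-sphere generating $\pi_2(X_n(K))$ with $\pi_1(X_n(K) \sm \nu \Sigma)$ abelian. Because $\Sigma \cdot \Sigma = n$ and $X_n(K)$ is simply connected, a Mayer--Vietoris and abelianisation computation forces $\pi_1(X_n(K) \sm \nu \Sigma) \cong \Z/n$ (interpreting $\Z/0$ as $\Z$). The existence of the associated $\Z/n$-cover, together with the fact that $S_n^3(K) \subset \partial(X_n(K)\sm \nu \Sigma)$, yields condition~(i); the equivalence with the Alexander polynomial product is the classical Fox calculus presentation of $H_1$ of the $n$-fold cyclic cover of $S_n^3(K)$. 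For (ii), I would appeal to Robertello's formula: a Seifert surface for $K$ capped by the core of the attached $2$-handle is homologous to $\Sigma$ and hence provides a characteristic-type surface whose existence forces $\Arf(K)=0$. For (iii), I would pass to the $\Z/n$-cover (branched, if $n$ is even) of $X_n(K)$ and apply a Casson--Gordon / Atiyah--Singer $G$-signature argument: the Tristram--Levine signatures $\sigma_K(\xi)$ at $n$-th roots of unity appear as character-twisted signatures of this cover, and the existence of $\Sigma$ forces these twisted signatures to vanish.

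For sufficiency, I would start with the ``Seifert surface'' representative $\Sigma_0 \subset X_n(K)$ of the generator, constructed by pushing a minimal-genus Seifert surface for $K$ into $D^4$ and capping off with the core of the attached $2$-handle. This is a locally flat surface of genus $g(K)$ with complement having $\pi_1 \cong \Z/n$. The plan is to ambiently surger $\Sigma_0$ down to a $2$-sphere, by finding a symplectic basis of $H_1(\Sigma_0;\Z)$ represented by disjoint, locally flat embedded disks in $X_n(K) \sm \nu\Sigma_0$ with the appropriate framings. The algebraic intersection form on this complement is controlled by condition~(i), which also ensures that the required surgery disks exist immersed. The obstructions to upgrading these immersed disks to embedded ones are the standard $L$-theoretic obstructions over $\Z[\Z/n]$: condition~(iii) eliminates the multisignature part, and condition~(ii) eliminates the residual $\Z/2$-valued Kervaire--Arf piece. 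Once the algebraic obstructions vanish, the Freedman--Quinn disk embedding theorem, applicable because $\Z$ and $\Z/n$ are good groups, produces the required embedded disks, and the resulting ambient surgery yields the desired sphere.

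The principal obstacle will be executing the sufficiency direction, specifically organising the geometric problem as a controlled topological surgery problem over $\Z[\Z/n]$ with boundary $S_n^3(K)$, and verifying that the three classical conditions package precisely as the needed surgery obstructions with no further hidden invariants. Making the disk embedding theorem apply in this relative setting with non-simply-connected boundary $S_n^3(K)$, while maintaining control over the fundamental group of the complement throughout the sequence of ambient modifications of $\Sigma_0$, is the technical heart of the proof.
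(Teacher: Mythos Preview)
Your necessity sketch is broadly in line with the paper's approach, though the paper's treatment of $\Arf(K)$ uses the $\tau$ invariant of Matsumoto and Freedman--Kirby rather than Robertello directly.

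The sufficiency direction has a genuine gap concerning the role of the Arf invariant. You assert that condition~(ii) ``eliminates the residual $\Z/2$-valued Kervaire--Arf piece'' of the $L$-theoretic obstruction, but $L_4^s(\Z[\Z/n])$ has no such torsion component: it is detected entirely by multisignatures and is torsion-free (isomorphic to a direct sum of copies of $4\Z$ and $8\Z$). So the three conditions do \emph{not} package as a single $L_4^s$ obstruction in the way you propose. In the paper's proof the Arf invariant enters at two entirely different stages depending on the parity of $n$: for $n$ even it is the obstruction in $\Omega_3^{\fr}$ to constructing a degree one normal bordism from $S_n^3(K)$ to $L(n,1)$ in the first place, before any surgery obstruction is computed; for $n$ odd the normal bordism exists regardless, but after surgery produces a homology cobordism $V$ and one forms $X := V \cup_{L(n,1)} D_n$, the Arf invariant computes $\ks(X)$ and so decides, via Boyer's classification of simply connected $4$-manifolds with boundary, whether $X$ is homeomorphic to $X_n(K)$.

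More broadly, your ambient-surgery-on-$\Sigma_0$ strategy is not the paper's route and would require substantial additional input to execute. The disk embedding theorem needs algebraically dual spheres for the proposed compressing disks, and it is not clear these are available in $X_n(K) \setminus \nu\Sigma_0$; nor is it clear how the framing and $\pi_1$ conditions on the compressing disks translate into an $L_4^s(\Z[\Z/n])$ problem. The paper instead works externally: it builds a degree one normal bordism $W$ from $S^3_n(K)$ to $L(n,1)$, uses the multisignature computation of $L_4^s(\Z[\Z/n])$ together with condition~(iii) to surger $W$ to a homotopy $L(n,1)\times I$, caps off with $D_n$, and then invokes Boyer's classification to identify the result with $X_n(K)$. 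The embedded sphere is then simply the zero-section of $D_n$.
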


For certain choices of $n$, there are logical dependencies among the conditions \eqref{item:trivialh1}, (\ref{item:trivialarf}), and (\ref{item:signaturesobstruct}) above.  When $n=0$, condition \eqref{item:trivialh1} states that $H_1(S^3_0(K); \Z[\Z])=0$, which is equivalent to $\Delta_K(t)=1$, which in turn implies both conditions (\ref{item:trivialarf}) and (\ref{item:signaturesobstruct}). When $n=\pm1$, conditions (\ref{item:trivialh1}) and (\ref{item:signaturesobstruct}) are automatically satisfied.

When $n \neq 0$, condition~\eqref{item:trivialh1} is equivalent to the condition that $\Sigma_{|n|}(K)$, the $n$-fold cyclic branched cover of $S^3$ with branching set $K$, is an integral homology sphere. This is due to the classical formula $|H_1(\Sigma_{|n|}(K);\Z)| = \prod_{\{\xi  \mid \xi^n=1\} }\Delta_K(\xi)$ due to~\cite{Goeritz34, Fox-56-iii}. When $n \neq 0$ is even, (\ref{item:trivialh1}) implies (\ref{item:trivialarf}), as follows. The expression $\prod_{\{\xi  \mid \xi^n=1\} }\Delta_K(\xi)$ equals the \emph{resultant} $\operatorname{Res}(\Delta_K(t), t^n-1) \in \Z$. Whenever $m$ divides $n$, $t^m-1$ divides $t^n-1$, and so by the characterising properties of resultants we have
\[
\operatorname{Res}(\Delta_K(t), t^n-1) = \operatorname{Res}(\Delta_K(t), t^m-1) \cdot \operatorname{Res}(\Delta_K(t),(t^n-1)/(t^m-1)).
\]
Since  $\Arf(K)=1$ implies that $\operatorname{Res}(\Delta_K(t), t^2-1) = \Delta_K(-1) \neq 1$, it follows that $\Arf(K)=1$ further implies that $\operatorname{Res}(\Delta_K(t), t^n-1)= \prod_{\{\xi  \mid \xi^n=1\} }\Delta_K(\xi)$ is not equal to 1 for $n$ even.

Throughout the paper we will assume knots are oriented in order to make various constructions in the standard way. However, none of the conditions (\ref{item:trivialh1}), (\ref{item:trivialarf}), or (\ref{item:signaturesobstruct}) depends on a given orientation for the knot, and so the characterisation provided by Theorem~\ref{theorem:Zn-shake-slice-thm} is independent of this choice.

The remainder of the introduction proceeds as follows. In Section~\ref{sec:shakeintro}, we discuss the applications of Theorem~\ref{theorem:Zn-shake-slice-thm} to the study of whether a knot is \emph{shake slice}. In Section~\ref{sec:examples-simplest-cases} we give a quick proof of the main theorem for the case $n=0$, and then some further results that we have obtained when $n=\pm 1$. In Section~\ref{sec:outline} we outline the topological surgery theory strategy we use to prove our main result.

\subsection{Shake slice knots}\label{sec:shakeintro}
The embedding question for a generator of the second homotopy group of a knot trace is of interest via the lens of knot theory.

\begin{definition}
  A knot $K$ is \emph{$n$-shake slice} if a generator of $\pi_2(X_n(K)) \cong \Z$ can be realised by a locally flat embedded 2-sphere $S$.
 We say a knot $K$ is \emph{$\Z/n$-shake slice} if in addition $\pi_1(X_n(K) \sm S) \cong \Z/n$.
\end{definition}

For every $n \in \mathbb{Z}$, the fundamental group of the complement of an embedded sphere generating $\pi_2(X_n(K))$ abelianises to~$\Z/n$, so our condition that $\pi_1(X_n(K)) \cong \Z/n$ is just a more specific way to express the abelian condition on the fundamental group.

Theorem~\ref{theorem:Zn-shake-slice-thm} can be viewed as a characterisation of when a knot $K$ is $\Z/n$-shake slice.
Although the term `$n$-shake slice' was not coined until much later, classical obstructions to being $n$-shake slice were already obtained in the 1960s. In the course of proving Theorem~\ref{theorem:Zn-shake-slice-thm} we obtained several new proofs of these classical results. Robertello~\cite{robertello} showed that the Arf invariant obstructs $K$ from being $n$-shake slice, for every $n$. We give a new proof of Robertello's result in Proposition~\ref{prop:arf-obstruction-2} for all $n$, and we outline a second new proof in Remark~\ref{prop:Brown-Kervaire} for even $n$. Both proofs are different from Robertello's. Saeki~\cite{Saeki-92} has yet another  proof in the smooth category that uses the Casson invariant.

Tristram~\cite{tristram} showed that the signatures $\sigma_K(\xi_p^m)$, for $p$ a prime power dividing~$n$, obstruct $K$ being $n$-shake slice. We provide a different proof in Section~\ref{sec:obstructions} that also explains Tristram's theorem in the context of our results. Our proof is similar to that sketched by Saeki in~\cite[Theorem~3.4]{Saeki-92}.

For $n=0$, it remains unknown in both the smooth and topological categories whether every $0$-shake slice knot is slice.
An immediate result of our theorem is that a knot is $\Z$-shake slice if and only if it is $\Z$-slice: both correspond to Alexander polynomial one.

Now we describe some further consequences of Theorem~\ref{theorem:Zn-shake-slice-thm} that are proved as corollaries in Section~\ref{sec:shakeslice}. First, we provide new examples of the difference between the smooth and topological categories.  Recall that  knot is \emph{smoothly $n$-shake slice} if a generator of $\pi_2(X_n(K)) \cong \Z$ can be realised by a smoothly embedded 2-sphere $S$.

\begin{cor:smooth-vs-top}
For every $n>0$ there exist infinitely many knots that are $n$-shake slice but neither smoothly $n$-shake slice nor $($topologically$)$ slice. These knots may be chosen to be distinct in concordance.
\end{cor:smooth-vs-top}

We then show that being $n$-shake slice for infinitely many $n \in \mathbb{Z}$ does not imply slice.

\begin{cor:infinitelynshakeslice}
There exist infinitely many non-slice knots, each of which is $n$-shake slice for infinitely many $n \in \mathbb{Z}$. Moreover, these knots may be chosen to be distinct in concordance.
\end{cor:infinitelynshakeslice}

A question of Hedden~\cite{MPIM} asks whether the concordance class of a knot must be determined by the infinite tuple of homology cobordism classes $(S^3_{p/q}(K))_{p/q \in \mathbb{Q}}$.
We provide partial progress towards a negative answer to this question as follows, in what we believe is the first example of non-concordant knots with the property that infinitely many of their integer surgeries are homology cobordant.

\begin{cor:infinitelymanyhomcobsurgeries}
There exist infinitely many knots  $\{K_i\}$, mutually distinct in concordance, and an infinite family of integers $\{n_j\}$ such that $S^3_{n_j}(K_i)$ is homology cobordant to $S^3_{n_j}(U)$ for all $i, j\in \mathbb{Z}$.
\end{cor:infinitelymanyhomcobsurgeries}

We remark that $0$ is not an element of our family of integers $\{n_j\}$:  $S^3_0(K)$ is homology cobordant to $S^3_0(U)$ if and only if $K$ is slice in an integral homology 4-ball (which, incidentally, implies that $S^3_n(K)$ is homology cobordant to $S^3_n(U)$ for all $n \in \mathbb{Z}$.) It is currently an open question in both categories whether there exists a non-slice knot that is slice in a homology ball.


We then show that, for most $m$ and $n$, the $m$-shake slice and $n$-shake slice conditions are independent.

\begin{cor:examples}
  If $m$ does not divide $n$ then there exist infinitely many knots which are $n$-shake slice but not $m$-shake slice. These knots may be chosen to be distinct in concordance.
\end{cor:examples}

The difference between $n$-shake slice and $\Z/n$-shake slice can also be investigated using Theorem~\ref{theorem:Zn-shake-slice-thm}. For composite $n$ the theorem says there are extra signatures away from the prime power divisors of $n$ whose vanishing is necessary for an $n$-shake slice knot to be moreover $\Z/n$-shake slice; cf.~\cite[Proposition~3.7]{Saeki-92}. Consider that every slice knot is $n$-shake slice for all $n$, but according to Cha-Livingston~\cite{Cha-Livingston}, for any choice of composite $n$ and root of unity $\xi_n$, there exist slice knots $K$ with $\sigma_K(\xi_n)\neq 0$. We will prove that examples of this sort are not peculiar to composite $n$.

\begin{cor:1isweird}
For all $n \neq \pm1$, there exists a slice, and therefore $n$-shake slice, knot that is not $\Z/n$-shake slice.
\end{cor:1isweird}

\subsection{The cases  $n=0$ and $n=\pm1$}\label{sec:examples-simplest-cases}
The cases $n=0$ and $n=\pm1$ for Theorem~\ref{theorem:Zn-shake-slice-thm} are special, in that they can be proved relatively quickly by appealing directly to results of Freedman and Quinn. We provide a quick proof for $n=0$ now. Recall that when $n=0$ the three conditions in Theorem~\ref{theorem:Zn-shake-slice-thm} reduce to $\Delta_K(t)=1$.

\begin{example}[The case $n=0$]\label{ex:0}
First assume a generator of $\pi_2(X_0(K))$ is represented by an embedded sphere whose complement has fundamental group $\Z$. Since $n=0$, the normal bundle of this sphere is trivial. Perform surgery on $X_0(K)$ along this 2-sphere to obtain a pair $(V, S_0^3(K))$ where $\pi_2(V)=0$, and $\pi_1(V)\cong\Z$, generated by a meridian of $K$. Now attach a 2-handle to a meridian in the boundary. The cocore of the 2-handle is a slice disc $D'$ for $K$ in a homotopy 4-ball $B'$ such that $\pi_1(B' \smallsetminus D') \cong \Z$. Hence $\Delta_K(t)=1$.

Now assume $\Delta_K(t)=1$. By~\cite[Theorem~7]{Freedman:1984-1}\cite[11.7B]{FQ} (see also \cite[Appendix]{Garoufalidis-Teichner}), $K$ has a slice disc $D$ in $D^4$ with $\pi_1(D^4 \smallsetminus D) \cong \Z$. Cap this disc off with the core of the 2-handle to obtain the desired sphere in $X_0(K)$. This completes the proof of Theorem~\ref{theorem:Zn-shake-slice-thm} for $n=0$.
\end{example}



To see a similarly quick proof for $n=\pm1$, the reader is invited to skip ahead to Example~\ref{ex:1}. In this case, we rely on the result of Freedman that as $S^3_{\pm1}(K)$ is an integer homology sphere, it must bound a contractible 4-manifold. When $n=\pm1$ we have obtained a wholly different proof of Theorem~\ref{theorem:Zn-shake-slice-thm} using Seifert surface constructions. These methods, detailed in Section~\ref{sec:n=1}, lead to the two results described next.

For each $n$, one can measure how far a knot $K$ is from being $n$-shake slice by considering the minimal genus of a locally flat embedded surface generating $H_2(X_n(K))$. This minimum is called the (topological) \emph{$n$-shake genus} $g_{\operatorname{sh}}^n(K)$. For $n=1$ we have a precise understanding of this invariant.

\begin{prop:torus}
For every knot $K$ there exists a locally flat embedded torus in $X_1(K)$ that generates $H_2(X_1(K))$ and has simply connected complement. In particular, \[g^1_{\operatorname{sh}}(K) = \Arf(K) \in \{0,1\}.\]
\end{prop:torus}

Now for a slice knot $K$, and for each $n$, a slice disc capped off by the core of the 2-handle in $X_n(K)$ geometrically intersects the cocore once. This suggests a way to measure of how far an $n$-shake slice knot $K$ is from being slice, by taking the minimum over all embedded spheres $S$ generating $\pi_2(X_n(K))$ of the geometric intersection number of $S$ with the cocore of the 2-handle of $X_n(K)$. We call this minimum the \emph{$n$-shaking number} of $K$.

For a knot $K$, $\gs(K)$ denotes the (topological) 4-genus or slice genus, the minimal genus among compact, oriented, locally flat surfaces in~$D^4$ with boundary $K$.  The (topological) $\Z$-slice genus~$\gZ(K)$ is the minimal genus among such surfaces whose complement has infinite cyclic fundamental group.
Computable upper bounds for~$\gZ(K)$ are discussed in~\cite{FellerLewark_16}, and include $2\gZ(K) \leq \deg(\Delta_K)$~\cite{MR3523068}.
For $n=1$, as well as proving again that a knot with vanishing $\Arf$ invariant is $1$-shake slice, the Seifert surface method provides explicit upper bounds on the $1$-shaking number.

\begin{prop:shaking-num}
For a knot $K$ with  $\Arf(K)=0$ we have
\[2\gs(K)+1\leq \text{$1$-shaking number of $K$}\leq 2\gZ(K)+1.
\]
In particular, for each integer $k \geq 0$ there exists a $1$-shake slice knot $K_k$ such that the $1$-shaking number of $K_k$ is exactly $2k+1$.
\end{prop:shaking-num}

Note that for every $n$, the $n$-shaking number is always odd since the algebraic intersection of the generator of $\pi_2(X_n(K))$ with the cocore of the 2-handle is~$1$. Thus this is a complete realisation result for $1$-shaking numbers.

\subsection{Proof outline}\label{sec:outline}
Having already proved Theorem~\ref{theorem:Zn-shake-slice-thm} for the case of $n=0$ above, we now restrict to $n\neq 0$, and will outline the proof of the `if' direction.
We note that the components of the `only if' direction  are proven respectively in Proposition~\ref{prop:trivialh1}, Proposition~\ref{prop:arf-obstruction-2}, and Proposition~\ref{prop:signaturesobstruct}.

Given a group $\pi$ and closed $3$-manifolds $M_1$ and $M_2$, together with homomorphisms $\varphi_i\colon\pi_1(M_i)\to\pi$, we say a cobordism $W$ from $M_1$ to $M_2$ is \emph{over} $\pi$ if $W$ is equipped with a map $\pi_1(W)\to \pi$ restricting to the given homomorphisms on the boundary.

The key idea of our proof is that
for a fixed knot $K$, a generator of $\pi_2(X_n(K))$ can be represented by a locally flat embedded 2-sphere $S$ with $\pi_1(X_n(K) \sm S) \cong \Z/n$
if and only if there exists a homology cobordism $V$ from $S^3_n(K)$ to the lens space $L(n,1)$ over $\Z/n$, extending standard maps $\pi_1(S^3_n(K))\to \Z/n$ and $\pi_1(L(n,1))\to \Z/n$, such that  $\pi_1(V) \cong \Z/n$ and $V \cup_{L(n,1)} D_n$ is homeomorphic to $X_n(K)$, where $D_n$ denotes the $D^2$-bundle over $S^2$ with euler number $n$.

 The proof of the ``if'' direction involves constructing such a cobordism $V$ when the list of invariants in Theorem~\ref{theorem:Zn-shake-slice-thm} vanish. Here is an outline.

\begin{enumerate}
\item Show there exists a cobordism $W$ between $S^3_n(K)$ and the lens space $L(n,1)$  and a map $W \to L(n,1)\times [0,1]$ that restricts to a degree one normal map (Definition \ref{defn:DONM}) $f\colon S^3_n(K) \to L(n,1) \times \{0\}$ and the identity map $L(n,1) \to L(n,1) \times \{1\}$ (Section~\ref{sec:surgeryproblem}). This uses the assumption that $\Arf(K)=0$ when $n$ is even and no assumptions when $n$ is odd.
\item Use the computation of the simple surgery obstruction groups $L_4^s(\Z[\Z/n])$ in terms of multisignatures
 to show that we can replace $W$ with a homology cobordism $V$ between $S^3_n(K)$ and $L(n,1)$ over $\Z/n$, with $V$ homotopy equivalent to $L(n,1) \times I$ (Section~\ref{sec:homologycob}). This uses the assumptions that $H_1(S_n^3(K);\Z[\Z/n])=0$ and $\sigma_K(\xi_n^m) =0$ for all $m$.
\item Let $X:= V \cup_{L(n,1)} D_n$. Note that a generator for $\pi_2(X)$ is represented by an embedded sphere in $D_n$.
Use Boyer's classification (Theorem \ref{thm:Boyer}) to conclude that $X$ is homeomorphic to $X_n(K)$ (Section~\ref{sec:fake-trace}). This uses the assumption that $\Arf(K)=0$ when $n$ is odd and no additional assumptions when $n$ is even. More precisely, according to the classification, $X$ is homeomorphic to $X_n(K)$ automatically when $n$ is even, and if and only if $\ks(X)=\ks(X_n(K))=0$ when $n$ is odd; the latter equality follows since $X_n(K)$ is smooth. We show in Proposition~\ref{proposition:identifying} that $\Arf(K)=\ks(X)$.
\end{enumerate}

An interesting aspect of the proof (of both the ``if'' and ``only if'' directions) of Theorem~\ref{theorem:Zn-shake-slice-thm} is that the Arf invariant appears in different places for $n$ odd and $n$ even. However, in each case its vanishing is required.

 Recent work of Kim and Ruberman~\cite{Kim-Ruberman-2019} uses similar techniques to prove the existence of topological spines in certain 4-manifolds. Their argument is in some ways structurally quite similar to ours, since both works follow a surgery theoretic strategy.
There is no overlap between our results: every knot trace $X_n(K)$ admits a PL-spine consisting of the cone on $K$ union the core of the attached 2-handle, as is crucially used in \cite{Kim-Ruberman-2019}. Moreover, there is a key difference: Kim and Ruberman have flexibility in their choice of a second 3-manifold, whereas we have a fixed choice of $S^3_n(K)$ and $L(n,1)$.

\subsection*{Conventions}
From Section \ref{sec:obstructions} onwards, we assume for convenience that $n>0$. The case of $n=0$ was proved in Example \ref{ex:0}. When $n<0$, the argument is the same as for $-n$.
Throughout, manifolds are compact and oriented, and knots are oriented.

\subsection*{Acknowledgments}

We are grateful to Peter Teichner for explaining a proof of the $n=\pm 1$ case to us in 2016, and to Danny Ruberman and Fico Gonz\'{a}lez-Acu\~{n}a for interesting discussions.
This project started during the ``Workshop on 4-manifolds'' at the Max Planck Institute for Mathematics in Bonn in the autumn of 2019, and we thank the organisers of this workshop and the MPIM.
ANM is supported by NSF DMS-1902880.
PF and MN gratefully acknowledge support by the SNSF Grant~181199.


\section{Corollaries to Theorem~\ref{theorem:Zn-shake-slice-thm}}\label{sec:shakeslice}

Before embarking on the main work of proving Theorem~\ref{theorem:Zn-shake-slice-thm}, we use it to prove several knot theoretic corollaries.

\begin{cor}\label{cor:cables}
Let $K$ be any knot  and let $n$ be an integer. Suppose that $n$ is even or $\Arf(K)=0$. Then $C_{n,1}(K)$ is $\Z/n$-shake slice.
\end{cor}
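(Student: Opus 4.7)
The plan is to apply Theorem~\ref{theorem:Zn-shake-slice-thm} directly to the knot $C_{n,1}(K)$ at framing $n$, verifying each of the three listed conditions, and then to conclude $\Z/n$-shake sliceness via the introduction's remark that an abelian fundamental group on a sphere complement automatically equals $\Z/n$.

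The central computational input will be the specialisation of the standard satellite formulas to the pattern $T(n,1)$, which is unknotted. This gives
\[
\Delta_{C_{n,1}(K)}(t) = \Delta_K(t^n) \qquad \text{and} \qquad \sigma_{C_{n,1}(K)}(\omega) = \sigma_K(\omega^n),
\]
the first being the classical cable formula and the second a direct instance of Litherland's satellite signature formula (winding number $n$, pattern signature zero). From here, conditions~\eqref{item:trivialh1} and~\eqref{item:signaturesobstruct} fall out at once: for any $\xi$ with $\xi^n=1$ we have $\Delta_{C_{n,1}(K)}(\xi) = \Delta_K(1) = 1$ and $\sigma_{C_{n,1}(K)}(\xi) = \sigma_K(1) = 0$, so the product in~\eqref{item:trivialh1} equals $1$ and every signature in~\eqref{item:signaturesobstruct} vanishes, unconditionally on $K$ and $n$.

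The one step where the hypothesis actually enters is~\eqref{item:trivialarf}, and this proceeds via a parity dichotomy controlled by the Levine characterisation of $\Arf$ through $\Delta(-1) \pmod 8$. Evaluating at $t = -1$ yields $\Delta_{C_{n,1}(K)}(-1) = \Delta_K((-1)^n)$, which is $\Delta_K(1) = 1$ when $n$ is even and $\Delta_K(-1)$ when $n$ is odd. In the even case this forces $\Arf(C_{n,1}(K)) = 0$ automatically, and in the odd case it gives $\Arf(C_{n,1}(K)) = \Arf(K)$, which vanishes by the remaining hypothesis. The disjunctive hypothesis ``$n$ even or $\Arf(K)=0$'' is precisely tailored to cover both branches of this dichotomy. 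With~\eqref{item:trivialh1}, \eqref{item:trivialarf} and~\eqref{item:signaturesobstruct} all verified, Theorem~\ref{theorem:Zn-shake-slice-thm} completes the argument.

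The proof has no serious obstacle: it reduces to an invariant check, with the only genuine subtlety being the parity observation for the Arf invariant that dictates the exact form of the hypothesis.
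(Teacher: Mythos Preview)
Your proposal is correct and follows essentially the same approach as the paper: both proofs verify the three conditions of Theorem~\ref{theorem:Zn-shake-slice-thm} for $C_{n,1}(K)$ via the satellite formulas $\Delta_{C_{n,1}(K)}(t)=\Delta_K(t^n)$ and $\sigma_{C_{n,1}(K)}(\omega)=\sigma_K(\omega^n)$, and both handle the Arf condition using Levine's $\Delta(-1)\pmod 8$ criterion together with the parity dichotomy on $n$.
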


\begin{proof}
We use the formulae for  the Alexander polynomial and signatures of a satellite knot, due respectively to Seifert~\cite{Seifert:1950-1} and Litherland~\cite{Litherland:1979-1}, to verify the conditions of Theorem~\ref{theorem:Zn-shake-slice-thm} for $C_{n,1}(K)$.

 Since $C_{n,1}(U)=U$ we have
$\Delta_{C_{n,1}(K)}(t)= \Delta_K(t^n)$ and $\sigma_{C_{n,1}(K)}(\omega)= \sigma_K(\omega^n)$ for all $ \omega \in S^1$.
Letting $\omega_n$ denote a primitive $n$th root of unity, we therefore have for $1 \leq k \leq n$ that
\[ \sigma_{C_{n,1}(K)}(\omega_n^k)= \sigma_K(\omega_n^{nk})= \sigma_K(1) =0.\]
We have that
\begin{align*}
\prod_{k=1}^n \Delta_{C_{n,1}(K)}(\omega_n)= \prod_{k=1}^n \Delta_{K}(\omega_n^n) =1.
\end{align*}
Levine~\cite[Proposition~3.4]{Levine:1966-1} showed that $\Arf(J)=0$ if and only if $\Delta_J(-1)\equiv \pm 1 \pmod 8$, and so since
\[\Delta_{C_{n,1}(K)}(-1)= \Delta_{K}((-1)^n)=  \begin{cases}
\Delta_K(-1) & \text{ $n$ odd} \\ 1 & \text{ $n$ even.} \end{cases}\]
we obtain as desired that $\Arf(C_{n,1}(K))=0$.
\end{proof}

\begin{remark}\label{rem:easyway}
Gordon~\cite{Gordon-83-Dehn} observed that for any knot $K$
\[ S^3_n(C_{n,1}(K))= L(n,1)\, \#\, S^3_{1/n}(K).\]
This gives a slightly more direct argument that  $C_{n,1}(K)$ is $\Z/n$-shake slice whenever $n$ is even or $\Arf(K)=0$, as follows. Let $C$ be the contractible 4-manifold with boundary  $S^3_{1/n}(K)$ guaranteed by~\cite[Theorem~1.4$'$]{F}, and define $V$ to be the boundary connected sum of $L(n,1) \times I$ and $C$. The manifold $V$ is now a homology cobordism from $S^3_n(C_{n,1}(K))$ to $L(n,1)$ that is homotopy equivalent to $L(n,1) \times I$. This allows one to skip the work of Sections~\ref{sec:surgeryproblem} and~\ref{sec:homologycob} constructing a homology cobordism and go straight to proving that $V \cup D_n$ is homeomorphic to $X_n(C_{n,1}(K))$ using the results we prove in Section \ref{sec:fake-trace} and Boyer's classification (Theorem \ref{thm:Boyer}), which when $n$ is odd requires $\Arf(C_{n,1}(K))= \Arf(K)=0$.
\end{remark}

We can use Corollary~\ref{cor:cables} to obtain many new examples of $n$-shake slice knots.

\begin{corollary}\label{cor:smooth-vs-top}
For every $n \neq 0$ there exist infinitely many topological concordance classes of  knots that are $n$-shake slice but not smoothly $n$-shake slice.
\end{corollary}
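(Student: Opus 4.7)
The plan is to construct the examples as $(n,1)$-cables via Corollary~\ref{cor:cables}, and to obstruct smooth $n$-shake sliceness using Heegaard Floer correction terms together with Gordon's identity $S^3_n(C_{n,1}(K)) = L(n,1)\# S^3_{1/n}(K)$ from Remark~\ref{rem:easyway}. For any knot $K$ with $\Arf(K) = 0$ (automatic when $n$ is even), Corollary~\ref{cor:cables} directly gives that $C_{n,1}(K)$ is topologically $\Z/n$-shake slice, and thus in particular topologically $n$-shake slice.

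For the smooth obstruction, I would suppose for contradiction that $C_{n,1}(K)$ were smoothly $n$-shake slice, witnessed by a smoothly embedded sphere $S \subset X_n(C_{n,1}(K))$ representing a generator of $\pi_2$. Since $[S]^2 = n$, the boundary of a tubular neighbourhood of $S$ is $L(n,1)$, so $W := X_n(C_{n,1}(K)) \setminus \nu S$ is a smooth cobordism from $S^3_n(C_{n,1}(K))$ to $L(n,1)$. A Mayer--Vietoris calculation gives $H_2(W;\Z) = 0$ and shows that the inclusion $L(n,1) \hookrightarrow W$ induces an isomorphism on $H_1$; combined with Gordon's identity, $W$ is thus a smooth $\Z$-homology cobordism from $L(n,1)\# S^3_{1/n}(K)$ to $L(n,1)$. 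Since $b_2^\pm(W) = 0$, applying the Ozsv\'ath--Szab\'o $d$-invariant cobordism inequality in both directions forces equality of $d$-invariants under restriction of any spin-c structure on $W$; combined with additivity of $d$ under connected sum and the uniqueness of the spin-c structure on the integer homology sphere $S^3_{1/n}(K)$, this yields $d(S^3_{1/n}(K)) = 0$.

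It then remains to exhibit infinitely many knots $K$ with $\Arf(K) = 0$ and $d(S^3_{1/n}(K)) \neq 0$ whose $(n,1)$-cables are pairwise distinct in topological concordance. A natural choice is the family $K_m = T(2, 8m + 7)$ for $m \geq 0$: each satisfies $\Delta_{K_m}(-1) = 8m + 7 \equiv -1 \pmod 8$, so by Levine's criterion $\Arf(K_m) = 0$; positivity of $K_m$ as a torus knot gives $V_0(K_m) > 0$, so that $d(S^3_{1/n}(K_m)) = -2 V_0(K_m) < 0$; and their classical Tristram--Levine signatures $\sigma_{K_m}(-1) = -(8m+6)$ are pairwise distinct. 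Via the satellite formula $\sigma_{C_{n,1}(K)}(\omega) = \sigma_K(\omega^n)$ evaluated at any $2n$-th root of unity $\omega$ with $\omega^n = -1$, this translates to pairwise distinct signatures for the cables, hence distinct topological concordance classes. The main obstacle is the spin-c bookkeeping in the $d$-invariant argument: one must verify that every spin-c structure on $W$ restricts to the $S^3_n(C_{n,1}(K))$ boundary as $\mathfrak{s}|_{L(n,1)} \# \mathfrak{s}_0$ with $\mathfrak{s}_0$ the unique spin-c structure on $S^3_{1/n}(K)$, which reduces to checking that the inclusion $L(n,1) \hookrightarrow W$ induces an isomorphism on second cohomology.
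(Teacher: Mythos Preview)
Your approach is essentially the paper's: $(n,1)$-cables of suitable torus knots are topologically $n$-shake slice by Corollary~\ref{cor:cables}, and a smooth shake-slicing sphere would (via Lemma~\ref{lem:shake-slice-to-hom-cob} and Gordon's splitting) force $d(S^3_{1/n}(K))=0$, contradicted by an explicit computation. The choice $T_{2,8m+7}$ versus the paper's $T_{2,8j+1}$, and distinguishing concordance classes by $\sigma_{K_m}(-1)$ via the satellite formula versus the paper's first-jump argument, are cosmetic differences.

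There is one genuine gap. Your identity $d(S^3_{1/n}(K_m)) = -2V_0(K_m)$ is only valid for $n>0$ (this is where Ni--Wu's Proposition~1.6, relating $d(S^3_{1/n}(K))$ to $d(S^3_1(K))$, applies). For $n<0$ one has $d(S^3_{1/n}(K)) = -d(S^3_{1/|n|}(\overline{K})) = 2V_0(\overline{K})$, and the mirror of a positive torus knot has $V_0=0$; so for your specific family the obstruction evaporates when $n<0$. The paper handles this at the outset by observing that $K$ is (smoothly or topologically) $n$-shake slice if and only if $-K$ is $(-n)$-shake slice, reducing to $n>0$. You should insert this reduction.

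On the spin-c bookkeeping you flag as the main obstacle: the paper sidesteps it entirely by summing the homology cobordism $W$ with $-L(n,1)\times I$ along a copy of $D^4\times I$, producing a smooth rational homology cobordism directly from $S^3_{1/n}(K)$ to $S^3$, whence $d(S^3_{1/n}(K))=d(S^3)=0$ with no spin-c matching required. Your route also works (restriction of spin-c structures is bijective on a homology cobordism, and summing over all of them cancels the $L(n,1)$ contributions), but the paper's trick is cleaner.
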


\begin{proof}
In both the smooth and topological categories a knot $K$ is $n$-shake slice if and only if $-K$ is $(-n)$-shake slice. So it suffices to show the $n>0$ case as follows.

We can use the description of $S^3_n(C_{n,1}(K))$ from Remark~\ref{rem:easyway} to show that $C_{n,1}(K)$ is often not smoothly $n$-shake slice. Given an integer homology sphere $Y$, Ozsv{\'a}th-Szab{\'o} associate a so-called $d$-invariant $d(Y) \in \mathbb{Q}$, with the property that $d(Y)=0$ if $Y$ bounds a rational homology ball~\cite{Ozsvath-Szabo:2003-2}.  We show that if $C_{n,1}(K)$ is smoothly $n$-shake slice for some $n>0$, then $d(S^3_{1}(K))=0$ as follows.

Suppose that $C_{n,1}(K)$ is smoothly $n$-shake slice via a sphere $S$ in $X_n(J)$. The  exterior of $S$ can be quickly confirmed to be a smooth homology cobordism between $S^3_n(J)$ and $L(n,1)$; see Lemma~\ref{lem:shake-slice-to-hom-cob}.
%
%
Therefore $S^3_n(C_{n,1}(K))= L(n,1)\, \#\, S^3_{1/n}(K)$  and  $S^3_n(U)= L(n,1)$  are homology cobordant via some  smooth $W$.  By summing $W$ with $-L(n,1) \times I$ along $D^4 \times I \subset W$, we further obtain that $S^3_{1/n}(K)$ is smoothly rationally homology cobordant to $S^3$. Therefore, $d(S^3_{1/n}(K))= d(S^3)=0$. 
Furthermore, since $n >0$ we have $d(S^3_{1/n}(K))= d(S^3_1(K))$ by~\cite[Proposition 1.6]{Ni-Wu-2015}.

Now for each $j \in \mathbb{N}$ let $K_j:= T_{2,8j+1}$.  Note that since $K_j$ is alternating and the ordinary signature $\sigma_{K_j}(-1) <0$, \cite[Corollary 1.5]{Ozsvath-Szabo-03-alt} implies that $d(S^3_1(K)) \neq 0$. So $C_{n,1}(K_j)$ is not smoothly $n$-shake slice, despite being $\Z/n$-shake slice by Corollary~\ref{cor:cables}. One can use Litherland's satellite formula~\cite{Litherland:1979-1} to compute that the first jump of the Tristram-Levine signature function of $C_{n,1}(K_j)$ occurs  at $e^{2 \pi i \theta_j}$, where $\theta_j=\frac{1}{2n(8j+1)}$. Therefore the knots $C_{n,1}(K_j)$ are distinct in concordance.
%
\end{proof}

In another direction, we are able to show that there exist non-slice knots which are nevertheless $n$-shake slice for infinitely many $n \in \mathbb{Z}$. It is presently unknown in either category whether $0$-shake slice implies slice, and so the question of whether being $n$-shake slice for \emph{all} $n \in \mathbb{Z}$ implies slice appears both interesting and difficult.

\begin{cor}\label{cor:infinitelynshakeslice}
There exist knots that are $\Z/n$-shake slice for every prime power $n \in \mathbb{Z}$, but are not slice. Moreover, these knots may be chosen to represent infinitely many concordance classes.
\end{cor}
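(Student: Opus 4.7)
The plan is to apply Theorem~\ref{theorem:Zn-shake-slice-thm} to reformulate ``$\Z/n$-shake slice for every prime power $n$'' as three algebraic conditions on $K$: (i) $\operatorname{Res}(\Delta_K(t),t^n-1)=\pm 1$ for every prime power $n$, (ii) $\Arf(K)=0$, and (iii) $\sigma_K(\xi)=0$ for every root of unity $\xi$ of prime power order. An initial simplification is useful: since the prime power roots of unity are dense in $S^1$, and $\sigma_K$ is locally constant off the (finitely many) roots of $\Delta_K$ on $S^1$, condition~(iii) is equivalent to $\sigma_K\equiv 0$. I will therefore aim to produce algebraically slice knots---for which (ii) and (iii) hold for free---whose Alexander polynomials additionally satisfy (i).

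To engineer (i), I will take $\Delta_K(t)=\Phi_d(t)^2$, where $d=30=2\cdot 3\cdot 5$, or any positive integer with at least three distinct prime factors. The standard cyclotomic resultant formula---$\operatorname{Res}(\Phi_m,\Phi_n)=p^{\phi(n)}$ when $m>n$ and $m/n=p^k$ for a prime $p$, and $\operatorname{Res}(\Phi_m,\Phi_n)=1$ otherwise (for $m\neq n$)---combined with the observation that for $d=30$ neither $30/p^k$ nor $p^k/30$ is ever a prime power integer (the former would force $d$ to have at most two distinct prime factors; the latter is never even an integer), gives $\operatorname{Res}(\Phi_{30},\Phi_{p^k})=1$ for every prime power $p^k$. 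Multiplicativity over divisors then upgrades this to $\operatorname{Res}(\Phi_{30}^2,t^{p^k}-1)=1$ for every prime power $p^k$. Direct expansion shows $\Phi_{30}(-1)=1$, so $\Delta_K(-1)=1\equiv 1\pmod 8$ and Levine's criterion automatically yields $\Arf(K)=0$.

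My next step is to produce an actual knot $K$ with this Alexander polynomial and metabolic Seifert form. Since $\Phi_{30}(t)^2$ is palindromic of degree $16$ with $\Phi_{30}(1)^2=1$ and factorises as the norm $\Phi_{30}\cdot\Phi_{30}^*$ (as $\Phi_{30}$ is self-dual), standard Seifert matrix realisation---building a $16\times 16$ metabolic integer Seifert matrix $V=\bp 0 & A\\ A^\transpose+I & 0\ep$ with $\det(V-tV^\transpose)=\pm\Phi_{30}(t)^2$ and realising it via a genus-$8$ Seifert surface in $S^3$---supplies such a $K$. The metabolicity of the Seifert form then forces $\sigma_K\equiv 0$, so all three conditions of Theorem~\ref{theorem:Zn-shake-slice-thm} hold for every prime power $n$.

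The main obstacle is arranging non-sliceness and, further, producing infinitely many non-concordant such knots. Crucially, because condition~(i) implies $\Sigma_{p^k}(K)$ is a $\Z$-homology sphere for every prime power $p^k$, the usual Casson--Gordon invariants at prime power covers vanish trivially, so I must instead work at covers of non-prime-power order. For instance, $|H_1(\Sigma_6(K);\Z)|=\operatorname{Res}(\Phi_{30}^2,t^6-1)=625$, so the Casson--Gordon invariants of this cover are nontrivially defined. I plan to exploit this via the Cochran--Orr--Teichner (or Cha--Kim) iterated infection machinery: starting from the algebraically slice seed above and infecting along carefully chosen meridional curves with auxiliary knots of suitably varied higher-order signatures preserves $\Delta_K$, $\Arf(K)$, $\sigma_K$, and algebraic sliceness, while distinguishing infinitely many concordance classes of non-slice knots sharing all these invariants. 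This delivers the claimed infinite family of $\Z/n$-shake slice (for every prime power $n$), non-slice, pairwise non-concordant knots.
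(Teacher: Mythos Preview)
Your approach is essentially the same as the paper's: both take $\Delta_K=\Phi_m(t)^2$ with $m$ divisible by at least three distinct primes (the paper phrases this as $K$ sharing the Seifert form of $J\#-J$ where $\Delta_J=\Phi_m$), verify conditions (i)--(iii) of Theorem~\ref{theorem:Zn-shake-slice-thm} from this, and then produce infinitely many non-concordant knots with this fixed Seifert form. The paper black-boxes your resultant computation as a citation to Livingston, and black-boxes your infection sketch as a citation to Kim's theorem that any nontrivial-polynomial Seifert form is realised by infinitely many concordance classes; your version unpacks both ingredients.

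One terminological correction: in your final step you say you will infect along ``meridional curves.'' Curves with linking number $\pm 1$ with $K$ do \emph{not} preserve $\Delta_K$ or the Seifert form under infection. You need the infection curves to be null-homologous in the knot complement (for instance, lying in the complement of a fixed Seifert surface, or in the derived series as in the Cochran--Orr--Teichner setup). With that fix, the sketch is sound.
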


\begin{proof}
Let $J$ be a knot with Alexander polynomial equal to the $m^{th}$ cyclotomic polynomial, where $m$ is divisible by at least 3 distinct primes.
Since $J \#-J$ does not have trivial Alexander polynomial,
there exist infinitely many non-concordant knots sharing its Seifert form~\cite{Kim05}.
We show that any such knot $K$ is $\Z/n$-shake slice for all prime powers $n$.

By~\cite{Livingston02Seifert}, we have $|H_1(\Sigma_{|n|}(J))|=1$, so condition $(i)$ of Theorem~\ref{theorem:Zn-shake-slice-thm} follows immediately:
\[\prod_{\{\xi  \mid \xi^n=1\} }\Delta_K(\xi) =
 |H_1(\Sigma_{|n|}(K))|= |H_1(\Sigma_{|n|}(J))|^2=1.\]
For conditions $(ii)$ and $(iii)$, observe that
 since $K$ shares a Seifert form with the knot $J \#-J$, we have that $\Arf(K)=0$ and for every $\omega \in S^1$ we have
 \[\sigma_{\omega}(K)=\sigma_{\omega}(J)+ \sigma_{\omega}(-J)=0. \qedhere\]
\end{proof}

We remark that it is open whether there is a smoothly non-slice knot that is smoothly $n$-shake slice for infinitely many $n$. The smooth analogue of the next result is also open.

\begin{cor}\label{cor:infinitelymanyhomcobsurgeries}
There exist infinitely many knots $\{K_i\}$, mutually distinct in concordance, and an infinite family of integers $\{n_j\}$ such that $S^3_{n_j}(K_i)$ is homology cobordant to $S^3_{n_j}(U)$ for all $i, j\in \mathbb{N}$. 
\end{cor}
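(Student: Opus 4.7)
The plan is to reuse the knots provided by Corollary~\ref{cor:infinitelynshakeslice} and let the family $\{n_j\}$ consist of all prime powers (which is infinite). Corollary~\ref{cor:infinitelynshakeslice} supplies infinitely many knots $\{K_i\}$, mutually distinct in concordance, each of which is $\Z/n$-shake slice for every prime power $n$. So for each pair $(i,j)$, we have by assumption a locally flat embedded sphere $S_{i,j} \subset X_{n_j}(K_i)$ representing a generator of $\pi_2(X_{n_j}(K_i))$ and with $\pi_1(X_{n_j}(K_i)\setminus S_{i,j}) \cong \Z/n_j$.

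The key geometric step is to extract a homology cobordism from such a sphere. Let $W_{i,j}$ denote the exterior of an open tubular neighbourhood $\nu S_{i,j}$ in $X_{n_j}(K_i)$. Since the generator of $\pi_2(X_{n_j}(K_i))$ has self-intersection $n_j$, the boundary component $\partial \nu S_{i,j}$ is the lens space $L(n_j,1) = S^3_{n_j}(U)$; the other boundary component is $S^3_{n_j}(K_i)$. A standard Mayer--Vietoris computation applied to the decomposition $X_{n_j}(K_i) = W_{i,j} \cup \nu S_{i,j}$, using that $X_{n_j}(K_i)$ has the homology of $S^2$ and $\nu S_{i,j}$ is the disc bundle $D_{n_j}$, shows that $W_{i,j}$ is a homology cobordism from $S^3_{n_j}(K_i)$ to $S^3_{n_j}(U)$. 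This is precisely the content of Lemma~\ref{lem:shake-slice-to-hom-cob}, which is already invoked in the proof of Corollary~\ref{cor:smooth-vs-top}, so we can cite it directly.

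Combining these observations, $\{K_i\}$ and $\{n_j\}$ as above have the desired properties: the $K_i$ are pairwise non-concordant by the conclusion of Corollary~\ref{cor:infinitelynshakeslice}, and for every $i$ and every prime power $n_j$ the manifold $W_{i,j}$ is a homology cobordism between $S^3_{n_j}(K_i)$ and $S^3_{n_j}(U)$. There is no real obstacle once Corollary~\ref{cor:infinitelynshakeslice} is in hand; the whole proof is essentially a two-line deduction, so the main content of this statement was already established in proving that corollary via Theorem~\ref{theorem:Zn-shake-slice-thm}. The only point worth remarking on is that we need the stronger $\Z/n$-shake slice conclusion (not just $n$-shake slice) in order to apply Lemma~\ref{lem:shake-slice-to-hom-cob}, and this is exactly what Corollary~\ref{cor:infinitelynshakeslice} provides.
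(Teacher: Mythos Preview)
Your proof is correct and follows exactly the paper's approach: the paper's proof is the one-liner ``This follows immediately from Corollary~\ref{cor:infinitelynshakeslice} and Lemma~\ref{lem:shake-slice-to-hom-cob},'' and you have spelled out precisely that deduction. One minor inaccuracy in your closing remark: Lemma~\ref{lem:shake-slice-to-hom-cob} only requires $n$-shake slice (not $\Z/n$-shake slice) to produce the homology cobordism, so the stronger hypothesis is not actually needed here, though Corollary~\ref{cor:infinitelynshakeslice} happens to supply it.
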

\begin{proof}
This follows immediately from Corollary~\ref{cor:infinitelynshakeslice} and Lemma~\ref{lem:shake-slice-to-hom-cob}.
\end{proof}

We are also able to show that homology cobordism of the $n$-surgeries is often not enough to determine that a knot is $n$-shake slice, as follows.

\begin{cor}\label{cor:homcobnotshakeslice}
For each odd $n \in \mathbb{N}$,  there exists
 $K$ such that $S^3_n(K)$ and $S^3_n(U)$ are topologically homology cobordant but $K$ is not $n$-shake slice.
  In fact, for each odd $n$ there exist knots representing infinitely many concordance classes that satisfy this.
\end{cor}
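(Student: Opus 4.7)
The plan is to run the construction of Remark~\ref{rem:easyway} with $J$ chosen so that $\Arf(J)=1$, relying on Freedman's theorem to produce the homology cobordism while using Robertello's obstruction to prevent shake sliceness. Concretely, set $K := C_{n,1}(J)$ (with the convention $C_{1,1}(J)=J$). Since $n$ is odd, Seifert's satellite formula $\Delta_K(t)=\Delta_J(t^n)$ yields $\Delta_K(-1)=\Delta_J(-1)$, and Levine's characterisation of the Arf invariant then gives $\Arf(K)=\Arf(J)=1$. By Robertello's theorem (Proposition~\ref{prop:arf-obstruction-2}), $K$ is therefore not $n$-shake slice.

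To obtain the topological homology cobordism, I would invoke Gordon's surgery formula (cf.~Remark~\ref{rem:easyway}) to write
\[
  S^3_n(K) \;=\; L(n,1) \,\#\, S^3_{1/n}(J).
\]
Since $S^3_{1/n}(J)$ is an integer homology $3$-sphere, Freedman's theorem~\cite[Theorem~1.4$'$]{F} produces a contractible topological $4$-manifold $C$ with $\partial C=S^3_{1/n}(J)$. Deleting an open ball from $C$ yields a topological homology cobordism from $S^3_{1/n}(J)$ to $S^3$, and boundary-connect-summing with $L(n,1)\times I$ gives a topological homology cobordism from $L(n,1)\#S^3_{1/n}(J)=S^3_n(K)$ to $L(n,1)\#S^3=L(n,1)=S^3_n(U)$. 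This is exactly the shortcut argument of Remark~\ref{rem:easyway}, but \emph{without} its Arf hypothesis: the cobordism exists regardless of $\Arf(J)$; it is only the final identification of $V\cup D_n$ with $X_n(K)$ in the shake-slice argument that needs $\Arf=0$ in the odd-$n$ case.

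For the infinitely many concordance classes, take $J_k:=T_{2,8k+3}$; each satisfies $\Delta_{J_k}(-1)=8k+3\equiv 3\pmod 8$, so $\Arf(J_k)=1$, and set $K_k:=C_{n,1}(J_k)$. By Litherland's satellite formula~\cite{Litherland:1979-1}, $\sigma_{K_k}(\omega)=\sigma_{J_k}(\omega^n)$, so the first jump of the Tristram--Levine signature function of $K_k$ occurs at $e^{\pi i/(n(8k+3))}$, and these locations are distinct for distinct~$k$; this distinguishes the $K_k$ in concordance. The main technical point to check is that the boundary-connect-sum above really does yield a $\Z$-homology cobordism, which reduces to a routine Mayer--Vietoris computation using that $L(n,1)\times I$ is a product cobordism and that $C\setminus\operatorname{int}(D^4)$ has the homology of $S^3$.
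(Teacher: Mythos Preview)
Your proof is correct and uses essentially the same examples as the paper: the knots $K_k=C_{n,1}(T_{2,8k+3})$, with $\Arf(K_k)=1$ blocking $n$-shake sliceness via Robertello/Proposition~\ref{prop:arf-obstruction-2}, and the first jump of the Tristram--Levine signature distinguishing concordance classes.

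The one genuine difference is in how you produce the homology cobordism. The paper invokes the surgery-theoretic construction from the proof of Theorem~\ref{theorem:Zn-shake-slice-thm} (that is, Proposition~\ref{prop:homology-cobordism}), after noting that $C_{n,1}(J)$ satisfies conditions~\eqref{item:trivialh1} and~\eqref{item:signaturesobstruct} for any~$J$. You instead use the shortcut of Remark~\ref{rem:easyway}: Gordon's splitting $S^3_n(C_{n,1}(J))\cong L(n,1)\#S^3_{1/n}(J)$ together with Freedman's contractible filling of the homology sphere $S^3_{1/n}(J)$. Your route is more elementary and self-contained, avoiding the $L$-group and multisignature machinery entirely; the paper's route has the advantage of yielding a cobordism with $\pi_1\cong\Z/n$ (indeed homotopy equivalent to $L(n,1)\times I$), which is not needed here but is the stronger statement. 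Both are valid for this corollary.
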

\begin{proof}
By the proof of Theorem~\ref{theorem:Zn-shake-slice-thm}, if $n$ is odd and  $K$ is a knot with
$\prod_{\{\xi  \mid \xi^n=1\} }\Delta_K(\xi) =1$ and $\sigma_K(\xi)=0$ for all $n$th roots of unity $\xi$, then $S^3_n(K)$ and $S^3_n(U)= L(n,1)$ are homology cobordant. As discussed in the proof of Corollary~\ref{cor:smooth-vs-top}, for every knot $J$ the knot $K= C_{n,1}(J)$ satisfies these conditions. However, if $\Arf(J) \neq 0$, then since $\Arf(K)= \Arf(J)$ we obtain that $C_{n,1}(J)$ is not $n$-shake slice (or even, for the cognoscenti, $n$-shake concordant to the unknot).
Therefore the set $\{C_{n,1}(T_{2, 8j+3}\}_{j\geq 1}$ is an infinite collection of such knots, all distinguished in concordance by the first jump of their Tristram-Levine signature functions, which occur at $e^{2 \pi i y_j}$, where $y_j= \frac{1}{2n(8j +3)}$.
\end{proof}

We remark that in the examples of Corollary~\ref{cor:homcobnotshakeslice},  $S^3_n(K)$ and $S^3_n(U)$ are homology cobordant  not just with integer coefficients,  but also with $\Z[\Z/n]$-coefficients.

We also compare the difference between $m$-shake slice and $n$-shake slice for $m\neq n$.

\begin{corollary}\label{cor:m-doesnotdivide-n}
  If $m \mid n$ and $K$ is $\Z/n$-shake slice, then $K$ is $\Z/m$-shake slice.
\end{corollary}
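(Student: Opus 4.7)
The plan is to apply Theorem~\ref{theorem:Zn-shake-slice-thm} twice: first to convert the hypothesis that $K$ is $\Z/n$-shake slice into the vanishing of the three invariants (i), (ii), (iii) for the integer $n$, then to deduce the analogous vanishing for $m$, and finally to apply the theorem in reverse. A direct geometric argument (promoting an embedded sphere in $X_n(K)$ to one in $X_m(K)$) seems unlikely to work, since $X_n(K)$ and $X_m(K)$ are different $4$-manifolds; the invariant-level route is much cleaner.

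Conditions (ii) and (iii) transfer essentially for free. Condition (ii), $\Arf(K)=0$, is a $3$-dimensional invariant of $K$ with no reference to $n$, so it holds for $m$ automatically. For condition (iii), whenever $m\mid n$ one has the containment $\{\xi\in S^1 : \xi^m=1\}\subseteq \{\xi\in S^1 : \xi^n=1\}$, so the hypothesis $\sigma_K(\xi)=0$ on the larger set restricts to the vanishing on the smaller set.

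The main step is condition (i). Assuming $n\neq 0$, I would invoke the resultant factorization already displayed in the introduction: since $(t^m-1)\mid(t^n-1)$ in $\Z[t]$,
\[
\operatorname{Res}(\Delta_K(t),t^n-1) \,=\, \operatorname{Res}(\Delta_K(t),t^m-1)\cdot \operatorname{Res}\big(\Delta_K(t),(t^n-1)/(t^m-1)\big).
\]
By the hypothesis applied to $n$, the left side equals $\prod_{\xi^n=1}\Delta_K(\xi)=1$, and both factors on the right are integers, so each has absolute value $1$. Combined with the classical formula $|H_1(\Sigma_{|m|}(K);\Z)|=\prod_{\xi^m=1}\Delta_K(\xi)$ recorded just after Theorem~\ref{theorem:Zn-shake-slice-thm}, this forces $\Sigma_{|m|}(K)$ to be an integer homology sphere, which is precisely condition (i) for $m$. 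The remaining case $n=0$ reduces to $\Delta_K(t)=1$ by Example~\ref{ex:0}, from which all three conditions for $m$ are immediate.

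The only subtlety worth flagging is sign bookkeeping in the resultant step — the identity above yields only $|\operatorname{Res}(\Delta_K(t),t^m-1)|=1$ rather than equality to $1$ on the nose — but the topological reinterpretation as the order of the finite group $H_1(\Sigma_{|m|}(K);\Z)$ resolves this automatically, since a group order is a positive integer. Once all three conditions are verified for $m$, a second application of Theorem~\ref{theorem:Zn-shake-slice-thm} produces the desired sphere in $X_m(K)$ with complement having $\pi_1\cong\Z/m$.
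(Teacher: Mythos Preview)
Your proposal is correct and follows essentially the same approach as the paper: handle $n=0$ separately via $\Delta_K(t)=1$, and for $n\neq 0$ observe that the divisibility $\prod_{\xi^m=1}\Delta_K(\xi)\mid\prod_{\xi^n=1}\Delta_K(\xi)$ (equivalently, your resultant factorisation) forces condition~(i) for $m$, while conditions~(ii) and~(iii) transfer immediately. Your treatment is somewhat more explicit---in particular you spell out the resultant identity and the sign bookkeeping via $|H_1(\Sigma_{|m|}(K);\Z)|$---but the underlying argument is the same as the paper's.
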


\begin{proof}
First, note that if $n=0$ and $K$ is $\Z/n$-shake slice then $\Delta_K(t)=1$, and so the conditions $(i)$, $(ii)$, and $(iii)$ of Theorem~\ref{theorem:Zn-shake-slice-thm} are satisfied for all $m \in \Z$.

So assume $n \neq 0$.
Since $\prod_{\{\xi  \mid \xi^m=1\} }\Delta_K(\xi)$ divides $\prod_{\{\xi  \mid \xi^n=1\} }\Delta_K(\xi)$ and both are integers, 
if the criterion~\eqref{item:trivialh1} holds for~$n$, then it also holds for~$m$. 
The signature and Arf invariant conditions are immediate.
\end{proof}

\begin{corollary}\label{cor:examples}
  If $m$ does not divide $n$ then there exist infinitely many knots that are $n$-shake slice but not $m$-shake slice. These knots may be chosen to be distinct in concordance.
\end{corollary}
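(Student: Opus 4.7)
The plan is to mimic the strategy of Corollary~\ref{cor:smooth-vs-top} and take $(n,1)$-cables of suitable torus knots. Assume for concreteness that $n > 0$ (by the paper's conventions) and $m > 0$; the hypothesis $m \nmid n$ then excludes $n = 0$, and the remaining sign/zero cases are handled identically.

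For each $j \geq 1$ set $K_j := C_{n,1}(T_{2, 8j+1})$. Since $8j+1 \equiv 1 \pmod 8$, Levine's criterion yields $\Arf(T_{2, 8j+1}) = 0$, and Corollary~\ref{cor:cables} then certifies that $K_j$ is $\Z/n$-shake slice, hence $n$-shake slice.

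To rule out $m$-shake sliceness, the plan is to combine Tristram's obstruction (Section~\ref{sec:obstructions}) with Litherland's satellite signature formula $\sigma_{C_{n,1}(K)}(\omega) = \sigma_K(\omega^n)$. Using $m \nmid n$, I pick a prime power $q = p^a$ with $q \mid m$ and $q \nmid n$, and write $p^b$ for the $p$-part of $n$, so $0 \leq b < a$. Then $\xi_q^n$ is a primitive $p^{a-b}$-th root of unity, and
\[
\sigma_{K_j}(\xi_q) \;=\; \sigma_{T_{2, 8j+1}}(\xi_q^n).
\]
The main obstacle is showing that this signature is nonzero for all sufficiently large $j$. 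I expect to resolve this via the classical description of $\sigma_{T_{2, 2k+1}}$ as a step function with $2k$ jumps of size $-2$ uniformly distributed on the upper semicircle: at any fixed $\zeta \neq 1$ in $S^1$, its value tends to $-\infty$ with $k$. Once this is in hand, since $\xi_q$ is a prime-power root of unity of order dividing $m$, Tristram's obstruction precludes $K_j$ from being $m$-shake slice.

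Finally, I plan to verify that the $K_j$ lie in infinitely many concordance classes by copying the first-jump argument from the proof of Corollary~\ref{cor:smooth-vs-top}: the first jump of $\sigma_{K_j}$ on the upper semicircle occurs at $\exp(2\pi i/(2n(8j+1)))$, so for $i < j$ the signature $\sigma_{K_i \# -K_j}$ will be nonzero on an interval between the first jumps of the two summands, away from the finitely many roots of $\Delta_{K_i \# -K_j}$, which obstructs $K_i \# -K_j$ from being slice.
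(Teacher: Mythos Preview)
Your proposal is correct and follows essentially the same route as the paper: take $(n,1)$-cables of torus knots $T_{2,8j+1}$, use Corollary~\ref{cor:cables} for $n$-shake sliceness, and use a nonvanishing Tristram--Levine signature at a $q$th root of unity (for a prime power $q$ dividing $m$ but not $n$) together with Litherland's satellite formula to obstruct $m$-shake sliceness, distinguishing concordance classes by the first jump of the signature function. The paper phrases the ``large $j$'' step slightly differently---it demands $\sigma_{T_{2,8N+1}}(e^{2\pi ik/q})\neq 0$ for \emph{all} $k=1,\dots,q-1$ rather than your more targeted $\sigma_{T_{2,8j+1}}(\xi_q^n)\neq 0$---but this is cosmetic and your step-function justification is exactly what is needed.
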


\begin{proof}
Let $q$ be a prime power which divides $m$ but not $n$.
Let $K$ be any knot with $\Arf(K)=0$ and  $\sigma_{K}(e^{2 \pi ik/q}) \neq 0$ for all $k=1, \dots, q-1$. Such knots are easy to find, for example by taking $K=T_{2, 8N+1}$ for sufficiently large $N$.
 Now let $J=C_{n,1}(K)$, and note that since $\Arf(K)=0$, Corollary~\ref{cor:cables} tells us that $J$ is $n$-shake slice.
However,
$\sigma_{C_{n,1}(K)}(e^{2 \pi i/q})=  \sigma_{K}(e^{2 \pi in/q}) \neq 0$,
since $q$ does not divide~$n$. So $J$ is not $m$-shake slice.

We can obtain $\{J_j\}_{j \in \mathbb{N}}$ representing infinitely many concordance classes of $n$-shake but not $m$-shake slice knots  by letting $J_j=C_{n,1}(T_{2,8(N+j)+1})$ for sufficiently large $N$.  By choosing knots with distinct signature functions, such as the family $T_{2,8(N+j)+1}$, again for large $N$.
  As in the proof of Corollary~\ref{cor:smooth-vs-top}, these knots are distinguished in concordance by the first jump of the Tristram-Levine signature, which occurs for $J_j$ at $e^{2 \pi i x_j}$, where $x_j= \frac{1}{2n(8(N+j)+1)}$.
\end{proof}

\begin{remark}
For many pairs $(n,m)$ such that $m$ does not divide $n$, one can also find examples of $n$-shake slice knots which are not $m$-shake slice by considering certain linear combinations of $(2,2k+1)$ torus knots. For example, one can verify that $K_n:=6T_{2,4n+1} \# - 4T_{2, 6n+1}$ satisfies the conditions of Theorem~\ref{theorem:Zn-shake-slice-thm} for $n$, and hence is $\Z/n$-shake slice. Additionally, computation of Tristram-Levine signatures shows that if $m$ does not divide $2n$, then  $K_n$ is not $m$-shake slice.
\end{remark}

Finally, we show  in almost all cases that  $\Z/n$-shake slice is a strictly stronger condition than  $n$-shake slice.

\begin{corollary}\label{cor:1isweird}
A knot is $(\pm 1)$-shake slice if and only if it is $\Z/1$-shake slice.
For all other $n$ there exists a slice, and therefore $n$-shake slice, knot that is not $\Z/n$-shake slice.
\end{corollary}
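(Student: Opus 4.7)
The equivalence at $n=\pm 1$ is immediate from Theorem~\ref{theorem:Zn-shake-slice-thm}: the discussion following that theorem notes that conditions~\eqref{item:trivialh1} and~\eqref{item:signaturesobstruct} are automatic when $n=\pm 1$, so the theorem reduces to ``$K$ is $\Z/n$-shake slice iff $\Arf(K)=0$''. Robertello's obstruction (Proposition~\ref{prop:arf-obstruction-2}) gives the same condition $\Arf(K)=0$ for $K$ being $(\pm 1)$-shake slice, yielding the claimed equivalence (the reverse implication $\Z/n$-shake slice $\Rightarrow$ $n$-shake slice is tautological).

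For the second statement, fix $n\neq\pm 1$. Every slice knot is $n$-shake slice (gluing a slice disc to the core of the attached $2$-handle produces an embedded sphere representing a generator of $\pi_2(X_n(K))$) and automatically satisfies condition~\eqref{item:trivialarf}, so it suffices to exhibit a slice knot violating condition~\eqref{item:trivialh1} or~\eqref{item:signaturesobstruct} of Theorem~\ref{theorem:Zn-shake-slice-thm}. I split into two cases according to the arithmetic of $n$.

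When $n$ has at least two distinct prime divisors, the Cha-Livingston theorem cited immediately before the corollary supplies, for any primitive $n$-th root of unity $\xi_n$, a slice knot $K$ with $\sigma_K(\xi_n)\neq 0$, directly violating~\eqref{item:signaturesobstruct}.

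When $n=0$ or $n$ is a prime power, Tristram--Levine forces~\eqref{item:signaturesobstruct} on every slice knot, so one must instead violate~\eqref{item:trivialh1}. The plan is to take $K:=J\#(-J)$, which is ribbon (hence slice) for every knot $J$, and use that $\Delta_K(t)=\Delta_J(t)\Delta_J(t^{-1})$ up to units. For $n\neq 0$ this gives $\prod_{\xi^n=1}\Delta_K(\xi)=|H_1(\Sigma_{|n|}(J))|^2$, while for $n=0$ it gives $\Delta_K(t)=\Delta_J(t)^2$. It is therefore enough to pick $J$ with $|H_1(\Sigma_{|n|}(J))|>1$ (respectively with $\Delta_J\neq 1$, when $n=0$). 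Concrete choices---$J=T_{2,3}$ when $n=0$ or $n$ is a power of $2$, and $J=T_{2,p}$ when $n$ is a power of an odd prime $p$---succeed, with the required nonvanishing reducing to the elementary identity $\Delta_{T_{2,3}}(-1)=3$ together with the cyclotomic resultant computation $\operatorname{Res}(\Phi_{2p},\Phi_p)=2^{p-1}$ (and $\operatorname{Res}(\Phi_{2p},\Phi_{p^j})=1$ for $j\neq 1$), which follows from $\Phi_{2p}(\zeta)=2/(\zeta+1)$ at primitive $p$-th roots of unity. The principal obstacle is precisely the unavailability of a signature obstruction at prime-power roots of unity: this is why the corollary's second assertion requires a direct homological computation rather than a uniform appeal to Cha--Livingston across all $n\neq \pm 1$.
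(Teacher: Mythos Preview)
Your proof is correct, but both parts take a different route from the paper.

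For the first part, the paper gives a direct elementary argument that does not invoke Theorem~\ref{theorem:Zn-shake-slice-thm} at all: an embedded sphere generating $\pi_2(X_{\pm 1}(K))$ has normal euler number $\pm 1$, so a push-off meets it in a single point, which makes the meridian of the sphere null-homotopic in the complement; since the meridian normally generates, the complement is simply connected. Your argument instead bootstraps the main theorem together with Robertello's obstruction to show that both ``$(\pm 1)$-shake slice'' and ``$\Z/1$-shake slice'' are equivalent to $\Arf(K)=0$. This is valid---in fact the paper draws exactly this conclusion in the subsequent Corollary~\ref{cor:1iffArf}---but it uses heavy machinery where a two-line topological argument suffices.

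For the second part, the paper avoids any case analysis by exhibiting a single example, $K=4_1\#4_1$, that works uniformly for every $n\neq\pm 1$: this knot is slice, yet $|H_1(\Sigma_{|n|}(4_1))|>1$ for all $|n|>1$ (so condition~\eqref{item:trivialh1} fails), and $\Delta_K(t)=(t^2-3t+1)^2\neq 1$ handles $n=0$. Your split into ``$n$ with two prime factors'' (Cha--Livingston) versus ``$n$ a prime power or $0$'' (explicit $J\#(-J)$ with torus-knot $J$) also works, and your resultant computations are correct, but it is considerably more labour than necessary. The single figure-eight example sidesteps the cyclotomic bookkeeping entirely.
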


\begin{proof}
One direction of the first sentence is obvious. For the other direction,
observe that an embedded sphere $S$ representing a generator of $\pi_2(X_{\pm 1}(K))$ has a normal bundle with euler number $\pm 1$ (see Lemma~\ref{lem:lambda-mu-e}). In other words, the sphere $S$ has a push-off intersecting it precisely once. Thus the meridian of $S$ is null-homotopic in the complement of $S$. Since the meridian of $S$ normally generates the fundamental group of the complement of $S$, the statement follows.

The knot $K=4_1 \# 4_1$ is slice and hence $n$-shake slice for all $n$. But $K$ has $|H_1(\Sigma_n(K))| \neq 1$ for all $n>1$, and hence is not $\Z/n$-shake slice for $|n|>1$ by condition (\ref{item:signaturesobstruct}) of Theorem~\ref{theorem:Zn-shake-slice-thm}. Recall that when $n=0$, the conditions of Theorem~\ref{theorem:Zn-shake-slice-thm} reduce to triviality of the Alexander polynomial. But as $\Delta_K(t)=(t^2-3t+1)^2$, this knot $K$ is not $\Z/0$-shake slice either.
\end{proof}

When $n=\pm1$, the conditions (\ref{item:trivialh1}) and (\ref{item:signaturesobstruct}) of Theorem~\ref{theorem:Zn-shake-slice-thm} are automatically satisfied, so Corollary~\ref{cor:1isweird}  immediately gives the following.

\begin{corollary}\label{cor:1iffArf}
A knot $K$ is $(\pm 1)$-shake slice if and only  if  it is $\Z/ (\pm1)$-shake slice if and only if $\Arf(K)=0$.
\end{corollary}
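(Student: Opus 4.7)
The plan is to directly combine Corollary~\ref{cor:1isweird} with Theorem~\ref{theorem:Zn-shake-slice-thm}, after verifying that for $n = \pm 1$ two of the three conditions in the theorem are automatic. The first equivalence, that a knot $K$ is $(\pm 1)$-shake slice if and only if it is $\Z/(\pm 1)$-shake slice, is exactly the first sentence of Corollary~\ref{cor:1isweird}, which has already been proved; indeed, the normal bundle of a sphere representing a generator of $\pi_2(X_{\pm 1}(K))$ has euler number $\pm 1$, so the meridian of such a sphere is null-homotopic in its complement and the abelian quotient $\Z/(\pm 1)$ is trivial. Thus it suffices to establish the second equivalence.

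For the second equivalence, I would apply Theorem~\ref{theorem:Zn-shake-slice-thm} with $n = \pm 1$ and check that conditions~\eqref{item:trivialh1} and~\eqref{item:signaturesobstruct} hold for every knot. For condition~\eqref{item:trivialh1}, the set $\{\xi \mid \xi^{\pm 1} = 1\} = \{1\}$ and $\Delta_K(1) = 1$, so the product $\prod_{\xi^{\pm 1} = 1}\Delta_K(\xi) = 1$ is automatic. Equivalently, $\Z[\Z/(\pm 1)] = \Z$ and $S^3_{\pm 1}(K)$ is an integer homology sphere, so $H_1(S^3_{\pm 1}(K);\Z) = 0$. For condition~\eqref{item:signaturesobstruct}, the only $(\pm 1)$-th root of unity is $\xi = 1$, and $\sigma_K(1) = 0$ for every knot. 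Hence the only remaining condition in Theorem~\ref{theorem:Zn-shake-slice-thm} is~\eqref{item:trivialarf}, namely $\Arf(K) = 0$, giving the second equivalence.

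There is no real obstacle here beyond bookkeeping: both equivalences rest on results already proved in the paper, and the argument amounts to observing that for $n = \pm 1$ the cyclotomic product over the $n$-th roots of unity collapses to the single value $\Delta_K(1) = 1$ while the signature set collapses to $\{\sigma_K(1)\} = \{0\}$. I would conclude with a single sentence noting that this recovers the classical fact that the Arf invariant is the sole obstruction to being $(\pm 1)$-shake slice, consistent with the $n = 1$ case of Proposition~\ref{prop:torus}.
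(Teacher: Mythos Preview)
Your proposal is correct and follows essentially the same approach as the paper: invoke the first sentence of Corollary~\ref{cor:1isweird} for the first equivalence, then observe that conditions~\eqref{item:trivialh1} and~\eqref{item:signaturesobstruct} of Theorem~\ref{theorem:Zn-shake-slice-thm} are automatic when $n=\pm 1$, leaving only the Arf condition. The paper's version is simply more terse.
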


If a knot is $n$-shake slice then it has vanishing Arf invariant~\cite{robertello}, so Corollary~\ref{cor:1iffArf} immediately gives the following.

\begin{corollary}
If a knot $K$ is $n$-shake slice for some integer $n$, then it is $(\pm 1)$-shake slice.
\end{corollary}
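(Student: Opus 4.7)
The plan is to chain together two already-established facts: Robertello's classical obstruction and Corollary~\ref{cor:1iffArf}. Suppose $K$ is $n$-shake slice for some integer $n$. By Robertello~\cite{robertello} (re-proved as Proposition~\ref{prop:arf-obstruction-2}), the Arf invariant obstructs $n$-shake sliceness for every $n$, so $\Arf(K)=0$. Corollary~\ref{cor:1iffArf} then states that vanishing Arf invariant is equivalent to being $(\pm 1)$-shake slice, so $K$ is $(\pm 1)$-shake slice as claimed.

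There is no real obstacle: the content has already been absorbed into the preceding corollary, and the proof is a one-line syllogism. The only subtlety worth flagging is that the hypothesis makes no restriction on the sign or magnitude of $n$ (including $n=0$), so one should note that Robertello's obstruction genuinely applies uniformly across all integers $n$, which is exactly the content of Proposition~\ref{prop:arf-obstruction-2}.
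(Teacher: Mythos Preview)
Your proof is correct and is essentially identical to the paper's own argument: the paper also deduces $\Arf(K)=0$ from Robertello's obstruction and then invokes Corollary~\ref{cor:1iffArf}. There is nothing to add.
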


Changing the orientation on an $n$-trace~$X_n(K)$ results in the trace~$X_{-n}(-K)$, and so $K$ is $n$-shake slice if and only if $-K$ is $(-n)$-shake slice.
Surprisingly, the conditions in Theorem~\ref{theorem:Zn-shake-slice-thm} show that an even stronger symmetry holds, as in the following corollary.

\begin{corollary}\label{cor:n-and-minus-n}
  A knot $K$ is $\Z/n$-shake slice if and only if it is $\Z/(-n)$-shake slice.
\end{corollary}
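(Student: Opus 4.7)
The plan is to deduce this directly from the characterization in Theorem~\ref{theorem:Zn-shake-slice-thm}, by observing that each of the three listed conditions depends on $n$ only through its absolute value. Concretely, I would proceed as follows.

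First, handle $n=0$: there is nothing to check since $-n = n$. So assume $n \neq 0$ and recall that Theorem~\ref{theorem:Zn-shake-slice-thm} says $K$ is $\Z/n$-shake slice if and only if $(i)$ $\prod_{\{\xi \mid \xi^n=1\}} \Delta_K(\xi) = 1$, $(ii)$ $\Arf(K)=0$, and $(iii)$ $\sigma_K(\xi)=0$ for every $\xi \in S^1$ with $\xi^n=1$. The key observation is that the indexing set is symmetric: the equation $\xi^n = 1$ defines the group of $|n|$-th roots of unity in $S^1$, and this is literally the same set as the solutions to $\xi^{-n} = 1$. Therefore the products and conditions appearing in $(i)$ and $(iii)$ are bit-for-bit identical for $n$ and for $-n$, while $(ii)$ makes no reference to $n$ at all.

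Consequently $K$ satisfies $(i)$--$(iii)$ for $n$ if and only if it satisfies them for $-n$, and applying Theorem~\ref{theorem:Zn-shake-slice-thm} in both directions yields the equivalence $K$ is $\Z/n$-shake slice $\iff$ $K$ is $\Z/(-n)$-shake slice. There is no serious obstacle here; the content is entirely packaged in the theorem. The only mild point worth flagging is that the \emph{a priori} geometric symmetry between $X_n(K)$ and $X_{-n}(-K)$ given by orientation reversal relates $K$ and $-K$, whereas the corollary asserts a stronger statement relating $K$ to \emph{itself} under $n \mapsto -n$; it is precisely the fact that the invariants $\Delta_K$, $\Arf(K)$, and the multiset of signatures $\{\sigma_K(\xi) : \xi^n=1\}$ are insensitive to the sign of $n$ that upgrades the orientation-reversal symmetry to the stated form.
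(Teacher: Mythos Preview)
Your proof is correct and is precisely the argument the paper intends: the corollary is stated without a standalone proof, being introduced with the remark that ``the conditions in Theorem~\ref{theorem:Zn-shake-slice-thm} show that an even stronger symmetry holds,'' which is exactly your observation that conditions \eqref{item:trivialh1}--\eqref{item:signaturesobstruct} depend on $n$ only through $|n|$. Your closing comment contrasting this with the weaker orientation-reversal symmetry $X_n(K)\cong -X_{-n}(-K)$ is also made explicitly in the paper immediately after the corollary.
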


Here of course $\Z/n \cong \Z/(-n)$, but the 4-manifolds~$X_n(K)$ and $X_{-n}(K)$ are generally different. In particular, we know of no reason to believe that a knot is $n$-shake slice if and only if it is $(-n)$-shake slice.

For any $n$, it remains unknown in both the smooth and topological category whether $n$-shake slice knots have $n$-shake slice connected sum. As the invariants involved in conditions \eqref{item:trivialh1}, (\ref{item:trivialarf}), and (\ref{item:signaturesobstruct}) of Theorem~\ref{theorem:Zn-shake-slice-thm} are all additive under connected sum we have the following immediate corollary.

\begin{corollary}\label{cor:connected-sum}
If $K$ and $J$ are $\Z/n$-shake slice, then so is $K \#J$.
\end{corollary}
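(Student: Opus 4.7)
The plan is to apply Theorem~\ref{theorem:Zn-shake-slice-thm} directly: it suffices to verify that each of the three conditions (\ref{item:trivialh1}), (\ref{item:trivialarf}), and (\ref{item:signaturesobstruct}) is preserved under connected sum. All three invariants in question are well known to be additive (or multiplicative, as appropriate) under $\#$, so the argument is essentially a matter of assembling these facts.

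First, assume $n \neq 0$. Under connected sum the Alexander polynomial is multiplicative, $\Delta_{K\#J}(t) = \Delta_K(t)\Delta_J(t)$, so
\[
\prod_{\{\xi \mid \xi^n=1\}} \Delta_{K\#J}(\xi) = \Bigl(\prod_{\{\xi \mid \xi^n=1\}} \Delta_K(\xi)\Bigr)\Bigl(\prod_{\{\xi \mid \xi^n=1\}} \Delta_J(\xi)\Bigr) = 1 \cdot 1 = 1,
\]
so condition~(\ref{item:trivialh1}) holds for $K\#J$. The Arf invariant satisfies $\Arf(K\#J) = \Arf(K) + \Arf(J) \bmod 2$, giving condition~(\ref{item:trivialarf}). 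For the Tristram--Levine signatures, the standard block-sum formula for Seifert forms yields $\sigma_{K\#J}(\xi) = \sigma_K(\xi) + \sigma_J(\xi)$ for every $\xi \in S^1$, and in particular for every $n$-th root of unity, giving condition~(\ref{item:signaturesobstruct}). Hence $K \# J$ is $\Z/n$-shake slice by Theorem~\ref{theorem:Zn-shake-slice-thm}.

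The case $n = 0$ is handled by the observation recorded right after the statement of Theorem~\ref{theorem:Zn-shake-slice-thm}: being $\Z/0$-shake slice is equivalent to $\Delta_K(t) = 1$. Since the Alexander polynomial is multiplicative under connected sum, $\Delta_K(t) = \Delta_J(t) = 1$ forces $\Delta_{K\#J}(t) = 1$, so $K\#J$ is again $\Z/0$-shake slice.

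There is no genuine obstacle: the whole corollary is an immediate bookkeeping consequence of Theorem~\ref{theorem:Zn-shake-slice-thm} together with the additivity of the relevant classical knot invariants, which is why the paper records it as an immediate corollary. The only mild point worth flagging is making the $n=0$ case explicit, since it is not subsumed by the product-of-Alexander-polynomial formulation stated in~(\ref{item:trivialh1}).
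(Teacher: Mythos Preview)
Your proposal is correct and takes essentially the same approach as the paper: the paper simply records that the invariants in conditions~(\ref{item:trivialh1}), (\ref{item:trivialarf}), and (\ref{item:signaturesobstruct}) are additive under connected sum, and you have spelled this out explicitly, including the mild extra care for the $n=0$ case.
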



\section{Obstructions to $\Z/n$-shake sliceness}\label{sec:obstructions}

In this section we prove that the conditions listed in Theorem~\ref{theorem:Zn-shake-slice-thm} on the knot signatures and the Alexander polynomial
 are indeed necessary conditions for a knot to be $\Z/n$-shake slice.
Some of these results have been shown by Tristram~\cite{tristram} and Saeki~\cite{Saeki-92}; we include our own proofs for completeness, for the convenience of the reader, and to introduce the identification of certain Atiyah-Singer/Casson-Gordon signatures with Tristram-Levine knot signatures that will be needed later on.
As stated in the conventions section, we henceforth assume that $n>0$.

\begin{lemma}\label{lem:shake-slice-to-hom-cob}
If a knot $K$ is $n$-shake slice via an embedded sphere $S$ in $X_n(K)$ then $W:= X_n(K) \smallsetminus \nu(S)$ is a homology cobordism from $S^3_n(K)$ to $L(n,1)$. If $K$ is further $\Z/n$-shake slice via $S$, then $\pi_1(W) \cong \Z/n$.
\end{lemma}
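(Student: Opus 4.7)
The plan is to decompose $X_n(K)$ along the boundary of a closed tubular neighborhood of $S$ and then run Mayer--Vietoris. Since $[S]$ generates $H_2(X_n(K)) \cong \Z$, whose intersection form is $(n)$, the self-intersection equals $n$; in other words the normal Euler number of $S$ is $n$. Hence the closed tubular neighborhood $\overline{\nu(S)}$ is homeomorphic to the disk bundle $D_n$ over $S^2$ with Euler number $n$, whose boundary is $L(n,1)$. This gives the decomposition $X_n(K) = W \cup_{L(n,1)} D_n$, with $\partial W = S^3_n(K) \sqcup -L(n,1)$.

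Next I would run the Mayer--Vietoris sequence for this decomposition. The crucial input is that $D_n \hookrightarrow X_n(K)$ induces an isomorphism on $H_2$, since both groups are infinite cyclic and generated by $[S]$. Feeding in the standard homologies of $D_n \simeq S^2$, $X_n(K) \simeq S^2$, and $L(n,1)$, one reads off $H_0(W) = \Z$, $H_1(W) \cong \Z/n$ with the isomorphism $H_1(L(n,1)) \to H_1(W)$ induced by inclusion, $H_2(W) = 0$, $H_3(W) \cong \Z$ with the isomorphism $H_3(L(n,1)) \to H_3(W)$ also induced by inclusion, and $H_4(W) = 0$. These inclusion-induced isomorphisms, together with the long exact sequence of the pair $(W, L(n,1))$, show $H_\ast(W, L(n,1)) = 0$. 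Applying Poincaré--Lefschetz duality to the pair $(W,\partial W)$ then gives $H_k(W, S^3_n(K)) \cong H^{4-k}(W, L(n,1)) = 0$ for all $k$, so $S^3_n(K) \hookrightarrow W$ is also a homology equivalence and $W$ is an integral homology cobordism as claimed. The main technical step is simply the Mayer--Vietoris bookkeeping, and I do not anticipate any genuine obstacle.

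For the second assertion, the open complement $X_n(K) \sm S$ deformation retracts onto $W = X_n(K) \sm \nu(S)$, so $\pi_1(W) \cong \pi_1(X_n(K) \sm S) \cong \Z/n$ is immediate from the $\Z/n$-shake slice hypothesis.
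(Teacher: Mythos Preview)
Your proposal is correct and takes essentially the same approach as the paper, which simply states that the first statement follows from the Mayer--Vietoris sequence for $X_n(K) = W \cup \nu(S)$ and that the second is immediate from the definition. You have supplied the details the paper omits, and your use of Poincar\'e--Lefschetz duality to pass from the $L(n,1)$ side to the $S^3_n(K)$ side is a clean way to avoid repeating the computation.
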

\begin{proof}
The first statement follows from computation using the Mayer-Vietoris sequence for $X_n(K)= W \cup \nu(S)$.  The second statement is immediate from the definition.
\end{proof}


\begin{remark}\label{rem:HCobShakeToLens}
In this section, we only use the hypothesis that the manifolds $S^3_n(K)$ and $L(n,1)$ are homology cobordant via a homology cobordism $W$ with $\pi_1(W) \cong \Z/n$. This hypothesis is sufficient to establish conditions \eqref{item:trivialh1} and \eqref{item:signaturesobstruct} of Theorem~\ref{theorem:Zn-shake-slice-thm}
However, one cannot make this weaker assumption, at least when $n$ is odd, if we wish to conclude that $\Arf(K)=0$. In particular, for every knot $K$  we know that $S^3_1(K)$ is homology cobordant to $L(1,1)=S^3$ by a simply connected cobordism ~\cite[Theorem~1.4$'$]{F} (see also~\cite[9.3C]{FQ}), but of course some knots have Arf invariant 1.  On the other hand, the existence of a \emph{smooth} homology cobordism would be enough to imply that $\Arf(K)=0$~\cite{Saeki-92}.
\end{remark}

\begin{prop}\label{prop:trivialh1}
Let $K$ be a knot such that $S^3_n(K)$ and $L(n,1)$ are homology cobordant via a homology cobordism $W$ with $\pi_1(W) \cong \Z/n$.
Then $H_1(S^3_n(K); \Z[\Z/n])=0= H_1(\Sigma_n(K);\Z)$.
\end{prop}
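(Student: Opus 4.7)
The plan is to analyse the universal cover $\wt W$ of $W$ as an auxiliary 4-manifold whose topology forces $H_1(Y)=0$, where $Y$ denotes the $\Z/n$-cover of $S^3_n(K)$ corresponding to the abelianization $\pi_1(S^3_n(K))\to H_1(S^3_n(K))=\Z/n$; by definition $H_1(Y;\Z)=H_1(S^3_n(K);\Z[\Z/n])$, so this will establish the first claim. First I would verify that both inclusions $L(n,1)\hookrightarrow W$ and $S^3_n(K)\hookrightarrow W$ are surjective on $\pi_1$: the homology cobordism hypothesis implies both induce isomorphisms on $H_1=\Z/n$, and since $\pi_1(W)=\Z/n$ is abelian, both source maps factor through the abelianization and surject. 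Consequently $\wt W$ is a compact simply connected $4$-manifold with boundary $\partial\wt W=S^3\sqcup Y$, where $S^3$ is the universal cover of $L(n,1)$.

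Next I would extract $H_3(\wt W;\Z)\cong\Z$ from the long exact sequence of the pair $(\wt W,\partial\wt W)$: Lefschetz duality gives $H_4(\wt W,\partial\wt W)\cong H^0(\wt W)=\Z$ and $H_3(\wt W,\partial\wt W)\cong H^1(\wt W)=0$, and the boundary map sends the fundamental class of $\wt W$ to a primitive element of $H_3(\partial\wt W)=\Z\oplus\Z$, yielding a short exact sequence $0\to\Z\to\Z\oplus\Z\to H_3(\wt W)\to 0$ and hence $H_3(\wt W)\cong\Z$.

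Since $W$ is a rational homology cobordism between rational homology spheres, $\chi(W)=0$, so $\chi(\wt W)=n\chi(W)=0$. Combining this with $b_0=1$, $b_1=0$, $b_3=1$, and $b_4=0$ forces $b_2(\wt W;\Q)=0$, so $H_2(\wt W;\Z)$ is pure torsion; by the universal coefficient theorem $H^2(\wt W;\Z)=\Hom(H_2(\wt W),\Z)\oplus\Ext(H_1(\wt W),\Z)=0$. Lefschetz duality converts this to $H_2(\wt W,\partial\wt W;\Z)=0$, and the portion
\[
0=H_2(\wt W,\partial\wt W)\to H_1(\partial\wt W)\to H_1(\wt W)=0
\]
of the long exact sequence of the pair forces $H_1(Y)=H_1(\partial\wt W)=0$, as required.

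Finally, $H_1(\Sigma_n(K);\Z)=0$ follows: as remarked after Theorem~\ref{theorem:Zn-shake-slice-thm}, the just-established vanishing $H_1(S^3_n(K);\Z[\Z/n])=0$ is equivalent to $\prod_{\xi^n=1}\Delta_K(\xi)=\pm 1$, which by the Goeritz-Fox formula $|H_1(\Sigma_n(K);\Z)|=\prod_{\xi^n=1}|\Delta_K(\xi)|$ yields $H_1(\Sigma_n(K);\Z)=0$. The subtle part of the argument is that it does \emph{not} require $H_2(\wt W;\Z)$ itself to vanish, only its free part, which follows from the Euler characteristic computation alone; the hypothesis $\pi_1(W)\cong\Z/n$ (rather than merely surjecting onto $\Z/n$) enters crucially by guaranteeing that $\wt W$ is simply connected, which is what makes the universal coefficient and duality arguments collapse so cleanly.
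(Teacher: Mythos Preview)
Your argument is correct and follows essentially the same strategy as the paper: pass to the universal cover $\wt W$, use $\chi(\wt W)=n\chi(W)=0$ to control the middle homology, then combine Poincar\'e--Lefschetz duality with the long exact sequence of $(\wt W,\partial\wt W)$ to force $H_1(\partial\wt W)=0$. The implementations differ in two minor ways. First, the paper runs the argument in $\F_p$ coefficients for each prime $p$ (obtaining $b_2^p(\wt W)=0$ and hence $b_1^p(\partial\wt W)=0$), whereas you work over $\Z$ and insert a universal coefficient step to pass from $b_2(\wt W;\Q)=0$ to $H^2(\wt W;\Z)=0$; your route is arguably tidier. Second, for $H_1(\Sigma_n(K))=0$ the paper gives a direct geometric argument, comparing $Y_n$ and $\Sigma_n(K)$ as two Dehn fillings of the $n$-fold cyclic cover $E_n(K)$ of the knot exterior and observing they are isomorphic quotients of $H_1(E_n(K))$, while you instead invoke the Alexander polynomial and Goeritz--Fox equivalences stated in the introduction. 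Both are legitimate; the paper's argument is more self-contained, yours is shorter given those classical facts.
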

\begin{proof}
We will show that the $n$-sheeted cyclic cover $Y_n:=(S^3_n(K))_n$  of $S^3_n(K)$ has trivial integral homology. Since $H_1(Y_n;\Z)=0$ implies that $H_1(S^3_n(K); \Z[\Z/n])=0$, this will imply the first part of our desired result.
By the universal coefficient theorem, it suffices to show that $H_1(Y_n ;\mathbb{F}_p)=0$ for all primes $p$.

Let $p$ be a prime. For each $k \in \mathbb{N}$ and space $X$, let $b_k^p(X)$ denote the dimension of $H_k(X; \mathbb{F}_p)$ as a $\mathbb{F}_p$-vector space.
 Let $\wt{W}$ be the $\Z/n$-cover of $W$, and observe that since $\wt{W}$ is simply connected we have $H_1(\wt{W}; \mathbb{F}_p)=0$. It follows that
 \[ 0=n \cdot \chi(W)= \chi(\wt{W})= 1+ b_2^{p}(\wt{W})- b_3^{p}(\wt{W}).\]
Here the first equality holds since $W$ has the same Euler characteristic as a closed 3-manifold.
By considering the long exact sequence of the pair $(W, \partial W)$, we obtain that
\[H^3(\wt{W}; \mathbb{F}_p) \cong H_1(\wt{W}, \partial \wt{W}; \mathbb{F}_p) \cong \mathbb{F}_p,\]
and hence that $b_2^{p}(\wt{W})=0$. It then follows from the same long exact sequence that $b_1^p(\partial \wt{W})= b_1^p( Y_n \sqcup S^3)=0$, and so we have established the first claim.

Let $E(K)$ denote the exterior of the knot $K$ in $S^3$ and $\mu_K$ and $\lambda_K$ denote the meridian and longitude respectively. By definition the manifold $S^3_n(K)= E(K) \cup (S^1 \times D^2)$, where $\{\pt\} \times \partial D^2$ is identified with $n \mu_K + \lambda_K$ and $S^1 \times \{\pt\}$ with $\lambda_K$. Therefore, we have that
\[Y_n= ( E(K) \cup (S^1 \times D^2))_n= E_n(K) \cup (S^1 \times D^2),\]
where $\{\pt\} \times \partial D^2$ is identified with $\wt{\mu_K^n}+ \wt{\lambda}_K$  and $S^1 \times \{\pt\}$ with $\wt{\lambda}_K$, where $\widetilde{\cdot}$ denotes a lift to the $n$-fold cover $E_n(K)\to E(K)$.

We also have that
\[\Sigma_n(K)=  E_n(K) \cup (S^1 \times D^2)\]
where $\{\pt\} \times \partial D^2$ is identified with $\wt{\mu_K^n}$  and $S^1 \times \{\pt\}$ with $\wt{\lambda}_K$.
Since $\wt{\lambda}_K$ is null-homologous in $E_n(K)$, we see that $H_1(\Sigma_n(K)) $ and $H_1(Y_n)$ are isomorphic quotients of $H_1(E_n(K))$.
\end{proof}

We extract the next statement from the proof of Proposition~\ref{prop:trivialh1} for later use.

\begin{corollary}\label{Scholium:matthias-fault}
Let $K$ be a knot such that $S^3_n(K)$ and $L(n,1)$ are homology cobordant via a homology cobordism $W$ with $\pi_1(W) \cong \Z/n$ and let $\wt{W}$ denote the $\Z/n$-cover of $W$.
Then  $H_2(\wt{W};\mathbb{F}_p)=0$ for every prime $p$.
\end{corollary}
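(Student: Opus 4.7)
The corollary is essentially a repackaging of an intermediate fact already established in the proof of Proposition \ref{prop:trivialh1}, and my plan is to extract precisely that fact. First I would observe that because $\pi_1(W) \cong \Z/n$, the $\Z/n$-cover $\wt{W}$ is simply connected, so $H_1(\wt{W};\mathbb{F}_p)=0$ for every prime $p$. Since $W$ is a cobordism between closed $3$-manifolds, it has the Euler characteristic of a closed $3$-manifold, namely $0$; by multiplicativity under finite covers, $\chi(\wt{W}) = n\cdot\chi(W) = 0$. Combining this with $b_0^p(\wt{W}) = 1$, $b_1^p(\wt{W}) = 0$, and $b_4^p(\wt{W}) = 0$ (the latter because $\wt{W}$ is a $4$-manifold with non-empty boundary), we obtain
\[ 1 + b_2^p(\wt{W}) - b_3^p(\wt{W}) = 0. \]

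Next I would compute $b_3^p(\wt{W})$ via Poincar\'e-Lefschetz duality, which gives $H^3(\wt{W};\mathbb{F}_p) \cong H_1(\wt{W},\partial\wt{W};\mathbb{F}_p)$. Applying the long exact sequence of the pair $(\wt{W},\partial\wt{W})$ with $\mathbb{F}_p$-coefficients, and recording that $\partial\wt{W}$ consists of two connected components (the $\Z/n$-cover of $S^3_n(K)$, and the $\Z/n$-cover of $L(n,1)$, which is $S^3$) while $\wt{W}$ is connected, one reads off $H_1(\wt{W},\partial\wt{W};\mathbb{F}_p) \cong \ker\left(\mathbb{F}_p^2 \to \mathbb{F}_p\right) \cong \mathbb{F}_p$. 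Hence $b_3^p(\wt{W}) = 1$ and therefore $b_2^p(\wt{W}) = 0$, which by the universal coefficient theorem gives the desired vanishing of $H_2(\wt{W};\mathbb{F}_p)$.

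Since every ingredient above already appears within the proof of Proposition \ref{prop:trivialh1}, there is no real obstacle to overcome. The only mild subtlety worth stating explicitly is the computation of $b_3^p(\wt{W})$: one must verify that both boundary components of $\wt{W}$ are connected (which follows from Proposition \ref{prop:trivialh1}, since the cover of $S^3_n(K)$ corresponding to the abelianisation map has first homology $H_1(S^3_n(K);\Z[\Z/n])$, whose vanishing is not required here but the connectedness follows from the fact that $\pi_1(S^3_n(K)) \twoheadrightarrow \Z/n$ is surjective). With that noted, everything else is formal.
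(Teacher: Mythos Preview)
Your proof is correct and follows exactly the approach the paper takes: the corollary is simply extracted from the proof of Proposition~\ref{prop:trivialh1}, and your argument reproduces that extraction step by step (simple connectivity of $\wt{W}$, Euler characteristic multiplicativity, Poincar\'e--Lefschetz duality, and the long exact sequence of the pair to get $b_3^p(\wt{W})=1$). One minor remark: once you have $b_2^p(\wt{W})=0$, that \emph{is} the vanishing of $H_2(\wt{W};\mathbb{F}_p)$, so the appeal to the universal coefficient theorem at the end is unnecessary; the place where universal coefficients is implicitly used is in passing from $\dim H^3(\wt{W};\mathbb{F}_p)=1$ to $b_3^p(\wt{W})=1$.
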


We would now like to prove the following. Note that the first statement implies that if a knot $K$ is $n$-shake slice then $\sigma_{\omega}(K)=0$ for every $q^{th}$ root of unity $\omega$, where $q$ is a prime power dividing $n$. This was originally proved using different methods by Tristram~\cite{tristram}.

\begin{prop}\label{prop:signaturesobstruct}
Let $K$ be a knot such that $S^3_n(K)$ and $L(n,1)$ are homology cobordant via a homology cobordism $W$.
Then $\sigma_{\omega}(K)=0$ for every $q^{th}$ root of unity $\omega$, where $q$ is a prime power dividing $n$. If
$\pi_1(W) \cong \Z/n$, then $\sigma_{\omega}(K)=0$ for every $n^{th}$ root of unity $\omega$.
\end{prop}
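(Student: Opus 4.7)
The plan is a signature-defect argument: interpret each Tristram--Levine signature $\sigma_K(\omega)$ as the value of an Atiyah--Patodi--Singer-type reduced signature invariant of $S^3_n(K)$ with a suitable character, and use the homology cobordism $W$ to equate it with the corresponding (vanishing) invariant for $L(n,1)$.

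For the strong statement ($\pi_1(W) \cong \Z/n$), let $\wt W \to W$ denote the $\Z/n$-cover. Corollary~\ref{Scholium:matthias-fault} combined with the universal coefficient theorem (as in the proof of Proposition~\ref{prop:trivialh1}) gives $H_2(\wt W; \Z) = 0$, hence all twisted signatures $\sigma_\chi(\wt W)$ as well as the ordinary signature $\sigma(\wt W)$ vanish. For every character $\chi_k\colon \Z/n \to U(1)$ given by $1 \mapsto e^{2\pi i k/n}$, the reduced signature invariant
\[
\rho(M,\chi) := \sigma_\chi(\widehat N) - \sigma(\widehat N) \in \Q,
\]
where $\widehat N$ is the $\Z/n$-cover of any $4$-manifold $N$ bounding $M$ with $\chi$ extending, is well-defined (a standard consequence of the $G$-signature theorem applied to closed covers). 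Cobordism invariance combined with $\sigma_{\chi_k}(\wt W) = \sigma(\wt W) = 0$ gives $\rho(S^3_n(K), \chi_k) = \rho(L(n,1), \chi_k)$, and the right-hand side vanishes because $L(n,1)$ bounds $X_n(U)$ whose $\Z/n$-cover has trivial $H_2$.

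The next step is to identify $\rho(S^3_n(K), \chi_k)$ with $\sigma_K(e^{2\pi i k/n})$. Define $\widehat{X_n(K)}$ as the $\Z/n$-branched cover of $X_n(K)$ over the cocore $D^*$ of the 2-handle: take the $\Z/n$-cover of $X_n(K) \smallsetminus \nu(D^*)$, which is well-defined because the complement has fundamental group $\pi_1(S^3_n(K)) \cong \Z/n$, and fill in the $n$ lifts of $D^*$. Using a Seifert surface for $K$ pushed into $D^4$ to build an explicit model for $\widehat{X_n(K)}$, a classical Viro-style computation yields
\[
\sigma_{\chi_k}(\widehat{X_n(K)}) - \sigma(\widehat{X_n(K)}) = \sigma_K(e^{2\pi i k/n}).
\]
Together with the previous paragraph, this forces $\sigma_K(e^{2\pi i k/n}) = 0$ for every $k$.

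For the weaker statement (prime-power roots of unity, $W$ merely a $\Z$-homology cobordism), replace the $\Z/n$-cover by the $q$-fold cyclic cover $\wt W_q$ induced by the surjection $H_1(W) = \Z/n \twoheadrightarrow \Z/q$, for each prime power $q = p^r$ dividing $n$. A Casson--Gordon-style $\F_p$-coefficient argument using only the $\Z$-homology cobordism hypothesis yields $H_2(\wt W_q; \F_p) = 0$, and the same chain of identifications with $\Z/q$ in place of $\Z/n$ gives $\sigma_K(\omega) = 0$ for every $q$-th root of unity $\omega$. The main obstacle I expect is executing the Viro-style identification rigorously: the reduced signature invariant must be tied to $\sigma_K$ via an explicit intersection form computation on the branched cover, and careful bookkeeping is needed for character lifts, orientations, and the subtle distinction between $Y_n$ and the cyclic branched cover $\Sigma_n(K)$ noted in the proof of Proposition~\ref{prop:trivialh1}.
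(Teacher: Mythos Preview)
Your overall strategy---interpret $\sigma_K(\omega)$ as a signature defect and use $W$ to transfer between $S^3_n(K)$ and $L(n,1)$---matches the paper's. However, the identification step, which is the heart of the argument, contains genuine errors that break the proof as written.

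First, your invariant $\rho(M,\chi)$ should be $\sigma_\chi(\widehat N)-\sigma(N)$, not $\sigma_\chi(\widehat N)-\sigma(\widehat N)$; the latter is not well-defined (for a closed free $\Z/n$-cover the discrepancy is $(1-n)\sigma(N)$). More seriously, $\rho(L(n,1),\chi_k)$ is \emph{not} zero: it is a nontrivial lens space invariant. Your proposed computation via $X_n(U)$ fails because $X_n(U)$ is simply connected, so the nontrivial character on $L(n,1)=\partial X_n(U)$ does not extend and there is no unbranched $\Z/n$-cover. Equally fatal is your branched cover $\widehat{X_n(K)}$ over the cocore $D^*$: removing a tubular neighbourhood of the cocore from the $2$-handle simply undoes the handle attachment, so $X_n(K)\smallsetminus\nu(D^*)\cong D^4$ has trivial $\pi_1$, not $\Z/n$ as you claim. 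No such branched cover exists.

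The paper circumvents all of this by working instead with the Casson--Gordon \emph{branched} signature defect $\sigma_k(S^3_n(K),\phi)$ and computing it in two ways: once by quoting the Casson--Gordon surgery formula (yielding $1-\sigma_{\xi_n^k}(K)-\tfrac{2k(n-k)}{n}$), and once by capping $W$ off with $D_n$ and taking the branched cover of $D_n$ over its zero section (a punctured $\CP^2$). The lens space contribution appears identically on both sides and cancels, rather than vanishing on its own; equating gives $\sigma_{\xi_n^k}(K)=\sigma_k(\wt W)-\sigma(W)$. For the prime-power case, the paper obtains $H_2(\wt W_q;\Q)=0$ via a Friedl--Powell transfer lemma---rational coefficients only, not $\F_p$ as you assert, but $\Q$ is all that is needed for signatures.
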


Our strategy in proving Proposition~\ref{prop:signaturesobstruct} will be to relate the Tristram-Levine signature of $K$ at the $n^{th}$ roots of unity to the Atiyah-Singer/Casson-Gordon signatures~\cite{Atiyah-Singer:1968-3, CassonGordon78} of the 3-manifold $S^3_n(K)$.  We will then use the $n$-fold cyclic cover of the hypothesised  homology cobordism $W$ between $S^3_n(K)$ and $L(n,1)$, capped off in a certain nice way, to compute these Casson-Gordon signatures. We therefore recall the definition of the Casson-Gordon signatures of a 3-manifold, as given in \cite{CassonGordon78}.
For every $n \in \mathbb{N}$ we think of the cyclic group~$\Z/n$ as coming with a canonical multiplicative generator $t$.

To a closed oriented 3-manifold $Y$ and a map $\phi \colon H_1(Y) \to \Z/n$, we wish to associate $\sigma_k(Y, \phi) \in \Q$ for $k=1, \dots, n-1$.
Let $\wt{Y} \to Y$ be the covering induced by $\phi$, and note that there is a canonical covering transformation $\tau$ of $\wt{Y}$ corresponding to $t \in \Z/n$.

Now suppose there exists a $\Z/n$ branched covering of 4-manifolds $\wt{Z} \to Z$, branched over a surface $F$ contained in the interior of $Z$, and such that $\partial(\wt{Z} \to Z)=(\wt{Y} \to Y)$. Suppose in addition that the covering transformation $\wt{\tau}\colon\wt{Z}\to\wt{Z}$ that induces rotation through $2 \pi/n$ on the fibres of the normal bundle of $\wt{F}$ is such that it restricts on $\wt{Y}$ to $\tau$. An explicit construction as given, for example, in the proof of \cite[Lemma~3.1]{CassonGordon78}, shows that such a branched cover does always exist.
Then $H_2(\wt{Z};\mathbb{C})$ decomposes as $\bigoplus_{k=0}^{n-1} V_k$, where $V_k$ is the $\xi_n^k$-eigenspace of the $\wt{\tau}$-induced action on second homology, and as before $\xi_n := e^{2 \pi i/n}$.
Let $\sigma_k(\wt{Z})$ denote the signature of the intersection form of $\wt{Z}$ restricted to $V_k$. Define, for $k=1, \dots, n-1$, the signature defect
\begin{equation}\label{CG-formula}
\sigma_k(Y, \phi)  := \sigma(Z)- \sigma_k(\wt{Z}) - \frac{2 ([F]\cdot [F]) k (n-k)}{n^2}.
  \end{equation}

Casson-Gordon~\cite{CassonGordon78} used the Atiyah-Singer $G$-signature theorem~\cite{Atiyah-Singer:1968-3} to show that $\sigma_k(Y, \phi)$ is an invariant of the pair $(Y,\phi)$ for each $0<k<n$.

\begin{prop}\label{prop:TlsigsandCGsigs}
Let $K$ be a knot and $W$ be a cobordism between $S^3_n(K)$ and $L(n,1)$ over $\Z/n$.
Let $\xi_n=e^{2 \pi i/n}$ and $1 \leq k \leq n-1$.
Then
\[\sigma_{\xi_n^k}(K)= \sigma_k(\widetilde{W})- \sigma(W),\]
where $\sigma_k(\widetilde{W})$ is the signature of the intersection form of the $\Z/n$-cover of $W$ induced by the map $H_1(W) \to \Z/n$ when restricted to the $\xi_n^k$-eigenspace of the action of the generator of the group of deck transformations on $H_2(\widetilde{W}, \mathbb{C})$.
\end{prop}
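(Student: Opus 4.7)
My plan is to compute the equivariant signatures of the $n$-fold cyclic branched cover of a carefully chosen closed $4$-manifold built from $W$, $X_n(K)$, and $D_n$. The crucial structural feature is that the branch surface in this closed manifold will have self-intersection zero, so the $G$-signature correction terms vanish and the pieces constrain each other rigidly.

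First I would construct the closed manifold. Let $F'\subset D^4$ be a pushed-in Seifert surface for $K$, and let $\Sigma_K\subset X_n(K)$ be the closed surface obtained from $F'$ by capping off its boundary (isotoped along the attaching annulus to the $n$-framed longitude) with the core of the $2$-handle, so that $[\Sigma_K]\cdot[\Sigma_K] = n$. Let $S^2_0\subset D_n$ be the zero section, which also satisfies $[S^2_0]\cdot[S^2_0] = n$. Form
\[ Z := -X_n(K)\cup_{S^3_n(K)} W \cup_{L(n,1)} D_n, \qquad F := (-\Sigma_K)\sqcup S^2_0\subset Z. \]
The meridians of $\Sigma_K$ and $S^2_0$ map to the canonical generator of $\Z/n$ under the standard characters of $S^3_n(K)$ and $L(n,1)$ respectively; since by hypothesis these extend through $W$ to a map $H_1(W)\to\Z/n$, they glue to a character $H_1(Z\sm F)\to\Z/n$. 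Associated to this character is an $n$-fold cyclic cover $\wt{Z}\to Z$ branched over $F$.

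Next I would compute $\sigma_k(\wt{Z})$ in two ways. In $Z$ we have $[F]\cdot[F] = -n + n = 0$, so the $G$-signature theorem (the same input that underlies \eqref{CG-formula}) yields $\sigma_k(\wt{Z}) = \sigma(Z)$ for each $k\in\{1,\ldots,n-1\}$. Using $\sigma(D_n) = 1 = \sigma(X_n(K))$ and Novikov additivity for ordinary signatures, this simplifies to $\sigma_k(\wt{Z}) = -1 + \sigma(W) + 1 = \sigma(W)$. On the other hand, $Z$ is cut along the closed $3$-manifolds $S^3_n(K)$ and $L(n,1)$, whose $\Z/n$-covers lift to \emph{unbranched} covers of each piece of $\wt{Z}$. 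Novikov additivity in each eigenspace then gives
\[ \sigma_k(\wt{Z}) = -\sigma_k(\wt{X_n(K)}) + \sigma_k(\wt{W}) + \sigma_k(\wt{D_n}). \]

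To conclude I would identify the two cap contributions. The space $\wt{D_n}$ has $H_2(\wt{D_n};\C)\cong\C$ generated by the lift of $S^2_0$, which is fixed by the deck action, so $\sigma_k(\wt{D_n})=0$ for $k\neq 0$. Similarly, $\wt{X_n(K)}$ is obtained from the classical branched cover $\Sigma_n(D^4,F')$ by attaching a single $2$-handle whose core is a lift fixed by the deck action, so the new $H_2$ generator contributes only to the trivial eigenspace. Viro's classical computation of the intersection form on $\Sigma_n(D^4,F')$ (widely used by Casson-Gordon) identifies the $\xi_n^k$-eigenspace piece with $(1-\xi_n^k)V + (1-\xi_n^{-k})V^{\transpose}$ for any Seifert matrix $V$ of $K$, whose signature is by definition $\sigma_{\xi_n^k}(K)$. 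Putting everything together,
\[ \sigma(W) = -\sigma_{\xi_n^k}(K) + \sigma_k(\wt{W}) + 0, \]
which rearranges to the claimed identity. The main obstacle will be careful bookkeeping: verifying equivariant Novikov additivity in the present setting (using that the gluings are along closed $3$-manifolds with unbranched lifts, so there is no Wall-type correction term) and pinning down orientation conventions in Viro's identification. Both ingredients are classical but need to be assembled with care.
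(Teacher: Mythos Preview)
Your proof is correct and uses the same essential ingredients as the paper's proof, but organises them differently. The paper computes the Casson--Gordon signature defect $\sigma_k(S^3_n(K),\phi)$ in two ways: once via the surgery formula \cite[Lemma~3.1]{CassonGordon78}, which yields $1-\sigma_{\xi_n^k}(K)-\tfrac{2k(n-k)}{n}$, and once by capping $W$ off only on the $L(n,1)$ side with $D_n$ (branched over the zero section), which yields $(\sigma(W)+1)-\sigma_k(\wt{W})-\tfrac{2k(n-k)}{n}$. Equating these gives the result. You instead cap off \emph{both} ends, building the closed manifold $Z=-X_n(K)\cup W\cup D_n$ with a branch surface of self-intersection zero; the $G$-signature correction then vanishes and equivariant Novikov additivity does the rest. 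The two arguments are really the same computation seen from different angles: the Casson--Gordon surgery formula that the paper quotes is itself proved via the Viro identification of $\sigma_k(\Sigma_n(D^4,F'))$ with $\sigma_{\xi_n^k}(K)$, which is exactly the cap you add on the $S^3_n(K)$ side. Your route is a little more self-contained (it does not cite the surgery formula as a black box), while the paper's route is shorter because it does. Either way, the bookkeeping you flag---equivariant Novikov additivity along closed $3$-manifolds and the orientation conventions in Viro's identification---is indeed routine here.
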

\begin{proof}
Our proof follows from computing $\sigma_k(S^3_n(K), \phi)$, where $\phi \colon H_1(S^3_n(K)) \to \Z/n$ is the canonical map sending the meridian $\mu_K$ to $1 \in \Z/n$, in two different ways.

First, observe that in this setting the surgery formula of \cite[Lemma~3.1]{CassonGordon78} is particularly simple and reduces to
\begin{align}\label{formula:cg1}
\sigma_k(S^3_n(K), \phi)= 1- \sigma_{\xi_n^k}(K)-\frac{2k(n-k)}{n}.
\end{align}
Secondly, let $\wt{W} \to W$ be the $n$-fold cyclic cover. We have that
\[\partial(\wt{W} \to W)= (\widetilde{S^3_n(K)} \to S^3_n(K)) \sqcup (S^3 \to L(n,1)).\]
Now let $X_n(U)$ denote the $n$-trace of the unknot, i.e.\ the disc bundle $D_n$ over $S^2$ with euler number $n$. Let $S$ be the $n$-framed embedded 2-sphere in $X_n(U)$.
There is an $n$-fold cyclic branched cover $\wt{X}_S$ of $X_n(U)$ along $S$, which restricts on the boundary to the same (unbranched) cover $S^3 \to L(n,1)$ we saw above. Note that $\wt{X}_S$ is a punctured $\mathbb{CP}^2$.

We can therefore use $Z:= W \cup_{L(n,1)} X_n(U)$ and $\widetilde{Z}= \wt{W} \cup_{S^3} \wt{X}_S$ to compute $\sigma_k(S^3_n(K), \phi)$ using (\ref{CG-formula}).
Note that $H_2(\widetilde{Z}) \cong H_2(\widetilde{W}) \oplus \Z$, where the generator of the $\Z$ summand is represented by the lift of $S$ and hence has self-intersection $+1$, intersects trivially with all elements of $H_2(\widetilde{W})$, and  is preserved under the action of the covering transformation so lies in the $1$-eigenspace. Consequently,   for $0<k<n$ we have $\sigma_k(\wt{Z})=\sigma_k(\wt{W})$
and hence
\begin{align}
\sigma_k(S^3_n(K), \phi)&= \sigma(Z)- \sigma_k(\wt{Z}) - \frac{2 ([S] \cdot [S]) k(n-k)}{n^2} \nonumber \\
&= (\sigma(W)+1)- \sigma_k(\wt{W}) - \frac{2nk(n-k)}{n^2}. \label{formula:cg2}
\end{align}

Since $\sigma_k(S^3_n(K), \phi)$ is well-defined, by comparing the formulae of Equations~\ref{formula:cg1} and~\ref{formula:cg2} we  obtain as desired that
\[\sigma_{\xi_n^k}(K)= \sigma_k(\widetilde{W})- \sigma(W). \qedhere\]
\end{proof}

\begin{lem}\label{lem:lifting-homology-equiv}
Let $i \colon X \to Y$ be a map of $($spaces homotopy equivalent to$)$ finite CW complexes
that induces isomorphisms $i_* \colon H_k(X; \Z) \to H_k(Y;\Z)$ for all~$k$.
Suppose $\varepsilon \colon H_1(Y;\Z) \to \Z/{q}$ is a surjective map inducing $\Z/q$-covers $\wt{Y} \to Y$ and $\wt{X} \to X$.

If $q$ is a prime power, then the induced map
\[\wt{i}_* \colon H_k(\wt{X}; \Q) \to H_k(\wt{Y};\Q)\]
is an isomorphism for all~$k$.
\end{lem}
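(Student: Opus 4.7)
The plan is to first establish the analogous $\mathbb{F}_p$-homology equivalence on the covers, and then upgrade it to a rational one using the finite-type hypothesis. By the universal coefficient theorem, the hypothesis that $i_*$ is a $\Z$-homology isomorphism gives that $i_*\colon H_*(X;\mathbb{F}_p)\to H_*(Y;\mathbb{F}_p)$ is also an isomorphism. What needs to be promoted is this trivial-coefficient statement to an isomorphism on twisted homology with coefficients in the regular local system $\mathbb{F}_p[\Z/q]$ on $Y$ (induced by $\varepsilon$ and pulled back to $X$ through $i$), since $H_*(X;\mathbb{F}_p[\Z/q])=H_*(\wt{X};\mathbb{F}_p)$ and similarly for $Y$.

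The prime power hypothesis $q=p^m$ enters through the observation that
\[
\mathbb{F}_p[\Z/q]\;=\;\mathbb{F}_p[t]/(t^{p^m}-1)\;=\;\mathbb{F}_p[t]/(t-1)^{p^m}
\]
is a local ring with nilpotent maximal ideal $(t-1)$ and residue field $\mathbb{F}_p$. The decreasing filtration by the ideals $(t-1)^k$ therefore expresses $\mathbb{F}_p[\Z/q]$ as a successive extension of trivial $\Z/q$-modules $\mathbb{F}_p$, since $t\equiv 1$ modulo $(t-1)$. Viewing this as a filtration of local systems on $Y$ (and pulling it back to $X$), I would induct on $k$: the short exact sequence $0\to(t-1)^{k+1}\to(t-1)^k\to\mathbb{F}_p\to0$ of local systems produces a comparison of long exact sequences of homology with twisted coefficients on $X$ and $Y$, and the five lemma propagates the isomorphism from the trivial $\mathbb{F}_p$-coefficient case up through the filtration to $(t-1)^0=\mathbb{F}_p[\Z/q]$. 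This establishes that $\wt{i}_*\colon H_*(\wt{X};\mathbb{F}_p)\to H_*(\wt{Y};\mathbb{F}_p)$ is an isomorphism.

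Finally, I would upgrade from $\mathbb{F}_p$ to $\Q$ by a Bockstein argument. Since $\wt{X}$ and $\wt{Y}$ are finite CW complexes (as finite covers of finite CW complexes), the mapping cone $D$ of the induced chain map $C_*(\wt{X};\Z)\to C_*(\wt{Y};\Z)$ has finitely generated integral homology. The vanishing of $H_*(D;\mathbb{F}_p)$ combined with the Bockstein long exact sequence for $0\to\Z\xrightarrow{\cdot p}\Z\to\mathbb{F}_p\to0$ forces multiplication by $p$ to be an isomorphism on each $H_n(D;\Z)$; a finitely generated abelian group with this property has no free part and no $p$-primary torsion, so $H_n(D;\Q)=H_n(D;\Z)\otimes\Q=0$ for all $n$, yielding the rational isomorphism.

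The main obstacle, and the sole place where the prime power hypothesis is essential, is the filtration argument in the middle paragraph: for $q$ with more than one prime divisor the ring $\mathbb{F}_p[\Z/q]$ fails to be local, the trivial $\mathbb{F}_p$-local system is no longer the only composition factor of the regular local system, and the five-lemma induction does not close up.
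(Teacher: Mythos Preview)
Your argument is correct and takes a genuinely different route from the paper's. The paper treats this lemma as a direct consequence of an external black box: it composes $\varepsilon$ with the rational regular representation $\Z/q\to\GL_q(\Q)$, invokes \cite[Proposition~4.1]{Friedl-Powell:2010-1} to conclude that $i_*\colon H_*(X;\Q^q)\to H_*(Y;\Q^q)$ is an isomorphism, and then identifies $H_*(Z;\Q^q)$ with $H_*(\wt{Z};\Q)$. Your approach is instead self-contained and more transparent about the mechanism: you exploit that $\mathbb{F}_p[\Z/p^m]\cong\mathbb{F}_p[t]/(t-1)^{p^m}$ is local with unique simple module the trivial $\mathbb{F}_p$, so that the regular local system is built by iterated extensions of the trivial one, allowing the five lemma to propagate the untwisted $\mathbb{F}_p$-isomorphism up to the cover; then you pass from $\mathbb{F}_p$ to $\Q$ via a clean mapping-cone/Bockstein finiteness argument. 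The paper's proof is shorter but outsources the content; yours makes explicit exactly where and how the prime power hypothesis enters, and avoids any external citation. Both are valid; yours would be preferable in a context aiming for self-containment, while the paper's is appropriate given that Friedl--Powell is already in the literature.
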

\begin{proof}
Let $\alpha \colon H_1(Y) \to \GL_q(\Q)$ be the map obtained by composing $\varepsilon$ with the regular representation
\begin{align*}
        \Z/q &\to \GL_q(\Q)   \\
           k &\mapsto
\left[ \begin{array}{ccccc}
0&1&0&\dots &0 \\
0 & 0 & 1 & \dots & 0 \\
\vdots & \vdots & \ddots & \ddots & \vdots \\
0& 0 & 0 & \dots & 1 \\
1 & 0 & 0 & \dots & 0
\end{array} \right]^k.
 \end{align*}
As usual, this regular representation endows $\Q^q$ with the structure of a free $\Q[\Z/q]$-module of rank one.

By Friedl-Powell~\cite[Proposition~4.1]{Friedl-Powell:2010-1}, (applied with, in their notation, $H=\{1\}$), we have that
\[i_* \colon H_*(X; \Q^q) \to H_*(Y; \Q^q)\]
is an isomorphism.  Then for $Z\in \{X,Y\}$, we have natural identifications
\begin{align*}
H_*(Z; \Q^q)= H_*(C_*(\wt{Z}, \Q) \otimes_{\Q[\Z/q]} \Q^q)=H_*(C_*(\wt{Z}, \Q) \otimes_{\Q[\Z/q]} \Q[\Z/q])=H_*(\wt{Z}, \Q),
\end{align*}
and the desired result follows.
\end{proof}


\begin{proof}[Proof of Proposition~\ref{prop:signaturesobstruct}.]
Let $\wt{W}$ denote the $\Z/n$ cover of $W$. Note that since $W$ is a homology cobordism between $S^3_n(K)$ and $L(n,1)$ we have that $H_2(W)=0$ and so certainly $\sigma(W)=0$.

The case when
$\pi_1(W)\cong \Z/n$
follows quickly: Corollary~\ref{Scholium:matthias-fault} tells us that $H_2(\wt{W}; \mathbb{F}_p)=0$ for all primes $p$, and hence that $H_2(\wt{W}; \Z)=0$ and so $H_2(\wt{W}; \mathbb{C})=0$. Therefore, by Proposition~\ref{prop:TlsigsandCGsigs} we have for $k=1, \dots, n-1$ that
\[\sigma_{\xi_n^k}(K)= \sigma_k(\widetilde{W})- \sigma(W)=0-0=0.\]

So we now assume only that
$W$ is a homology cobordism, with no condition on the fundamental group.
Let $q$ be a prime power dividing $n$, and let $1 \leq k \leq q-1$ be relatively prime to $q$.
Let $\phi \colon H_1(S^3_n(K)) \to \Z/q$ be the map sending the class $[\mu_K]$ of the meridian of $K$ to $+1 \in \Z/q$.
We now argue exactly as in the proof of Proposition~\ref{prop:TlsigsandCGsigs} to show that
\[(\sigma(W)+1)- \sigma_k(\wt{W}) - \frac{2k(q-k)}{q}= \sigma_k(S^3_n(K), \phi)= 1- \sigma_{\xi_q^k}(K)-\frac{2k(q-k)}{q}\]
and hence, since $H_2(W; \mathbb{Q})=0$, that
\[ \sigma_{\xi_q^k}(K)= \sigma_k(\wt{W})- \sigma(W)=  \sigma_k(\wt{W}).\]


But by Lemma~\ref{lem:lifting-homology-equiv}, since the inclusion induced map
\[i_* \colon H_*(L(n,1); \Z) \to H_*(W; \Z)\]
is an isomorphism and $q$ is a prime power, we have that
\[\wt{i}_* \colon H_*(\wt{L(n,1)}; \Q) \to H_*(\wt{W}; \Q)\]
is also an isomorphism. But since $\wt{L(n,1)}$ is itself a lens space (or $S^3$ if $n=q$), we have that $H_2(\wt{L(n,1)}; \Q)=0$ and so  $H_2(\wt{W}; \Q)=0$ as well. Thus the $\xi_n^k$-eigenspace $V_k=0$, and so as desired
\[ \sigma_{\xi_q^k}(K)= \sigma_k(\wt{W})=0.\qedhere \]
\end{proof}

\section{Setting up the surgery problem}\label{sec:surgeryproblem}

We will use surgery theory to construct the exterior of the desired embedded sphere in an $n$-trace. We will eventually apply surgery in the topological category, but our initial input manifolds will be smooth. We thus now recall the input to a \emph{surgery problem} in the smooth category. There is an analogous theory in the topological category, and we will discuss this below at the point when it becomes necessary.

\begin{definition}\label{def:smoothnormal} Given a smooth $m$-manifold $X$, the tangent bundle is classified up to isomorphism by a homotopy class of maps $\tau_X\colon X\to \BO(m)\subset \BO$. The unique stable bundle $\nu_X\colon X\to \BO$ such that $\tau_X\oplus\nu_X\colon X\to \BO$ is null-homotopic is called the \emph{stable normal bundle} of $X$. The manifold $X$ can be \emph{stably framed} if $\nu_X$ is null-homotopic, and a choice of null homotopy is called a \emph{stable framing}. A choice of stable framing for $X$ is determined by a choice of stable trivialisation of the tangent bundle; that is, a choice of $k$ and vector bundle isomorphism $TX\oplus \underline{\R}^k\cong\underline{\R}^{m+k}$.
\end{definition}

Recall that an oriented $m$-manifold $X$ with (possibly empty) boundary has a \emph{fundamental class}, denoted $[X,\partial X]\in H_m(X,\partial X;\Z)$,  capping with which induces (twisted) Poincar\'{e}-Lefschetz duality isomorphisms $H^{m-k}(X,\partial X;\Z[\pi_1(X)]) \xrightarrow{\cong} H_{k}(X;\Z[\pi_1(X)])$  and  $H^{m-k}(X;\Z[\pi_1(X)]) \xrightarrow{\cong} H_{k}(X,\partial X;\Z[\pi_1(X)])$ for every $k$.

\begin{definition}\label{defn:DONM}
A map $(f,\partial f)\colon (X,\partial X)\to (Y,\partial Y)$ of smooth oriented $m$-manifolds with (possibly empty) boundary is called \emph{degree one} if $f_*([X, \partial X])=[Y,\partial Y]$. Given a degree one map $f$, a \emph{normal structure} is an isomorphism of stable bundles $\nu_X\simeq\nu_Y\circ f$.
A degree one map with choice of normal structure is called a \emph{degree one normal map}. We will often write $(X,f)$ for the data of a degree one normal map (suppressing the choice of stable bundle isomorphism).

For a topological space $Y$, a bordism $Z$ between closed $m$-manifolds $X$ and $X'$ is \emph{over Y} if there is a proper map $F\colon Z\to Y\times I$ such that $F(X)\subset Y\times\{0\}$ and $F(X')\subset Y\times\{1\}$. If $Y$ and $Z$ are smooth oriented $m$-manifolds and the map $F$ is a degree one normal map, then we call $(Z,F)$ a \emph{degree one normal bordism} from $(X,F|_X)$ to $(X',F|_{X'})$.
\end{definition}

\begin{remark}\label{rem:stabframe}
Given a degree one map $(f,\partial f)\colon (X,\partial X)\to (Y,\partial Y)$, if $\nu_Y$ is null-homotopic, then so is $\nu_Y\circ f$. So $f$ admits a normal structure if and only if $X$ can be stably framed.

Furthermore, we will sometimes be interested in picking a normal structure on $f$ that is compatible with a given one on $\partial f$. To understand this, suppose we are given a choice of stable framing on $Y$. This induces a choice of stable framing on $\partial Y$. Suppose we have a degree one map $\partial f\colon \partial X\to\partial Y$. A choice of normal structure on $\partial f$ is equivalent to a choice of stable framing on $\partial X$. Suppose such a choice has been made. Then a degree one map $(f,\partial f)\colon (X,\partial X)\to (Y,\partial Y)$ admits a normal structure inducing the given one on the boundary if and only if $X$ can be stably framed compatibly with $\partial X$.
\end{remark}

Note that the lens space $L(n,1)$ is diffeomorphic to the result of $n$-surgery along the unknot. In our applications, the target manifold for the map in the surgery problem will be either $(L(n,1),\varnothing)$ or $(L(n,1) \times I,L(n,1) \times \{0,1\})$. The tangent bundles of $L(n,1)$ and of $L(n,1) \times I$ are trivial, so in particular these manifolds can be stably framed. Choose once and for all a stable framing for $L(n,1)$, and hence one for $L(n,1) \times I$.

\begin{lemma}\label{lem:deg-one-map}
For any knot $K$, there exists a degree one normal map $f \colon S_n^3(K) \to L(n,1)$ that is a $\Z$-homology equivalence and that extends to a homotopy equivalence $\overline{f}\colon X_n(K) \to D_n$, where $D_n$ is the $D^2$-bundle over $S^2$ with euler number $n$. Additionally, one can arrange that the cocore of the $2$-handle of $X_n(K)$ maps to the cocore of the $2$-handle in the standard handle decomposition for $D_n$.
\end{lemma}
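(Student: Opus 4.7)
The plan is to build $\overline{f}\colon X_n(K)\to D_n$ directly from the natural handle decompositions of both manifolds. Writing $X_n(K)=D^4\cup_{\phi_K} h$ and $D_n=D^4\cup_{\phi_U} h$, where $h=D^2\times D^2$ is a 2-handle attached along the $n$-framing of $K$ or the unknot $U$ respectively, I will define $\overline{f}$ to be the identity on the 2-handle $h$ (so that the cocore $\{0\}\times D^2$ of $h$ in $X_n(K)$ maps identically to the cocore of $h$ in $D_n$) and a suitable degree one normal map on the 0-handle $D^4$, then verify that these pieces glue compatibly along the attaching region and that the resulting map has the claimed properties.

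The main technical step is the construction of a degree one normal map $\psi\colon(E(K),\partial E(K))\to(E(U),\partial E(U))$ restricting on the boundary to the standard diffeomorphism of tori matching meridians and $n$-framed longitudes. I will build $\psi$ using a Seifert surface $\Sigma$ for $K$: the standard degree one collapse $\Sigma\to D^2$ (identity on the boundary), obtained from a disk-with-bands decomposition of $\Sigma$ by collapsing the bands, thickens over a bicollar $\Sigma\times[-1,1]\subset E(K)$ to give a degree one map into $D^2\times[-1,1]\subset E(U)$; extending across the complementary piece of $E(K)$ by standard means produces the desired degree one map with the prescribed boundary behaviour. A normal structure on $\psi$ exists because every $3$-manifold is parallelisable, so stable framings can be chosen compatibly with the boundary identification. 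Extending $\psi$ radially over $\nu K\cong S^1\times D^2$ (identified with $\nu U$ via the $n$-framing) into $D^4$ produces a degree one normal map $g\colon D^4\to D^4$ whose boundary matches the attaching map $\phi_U$, and $\overline{f}$ defined as $g$ on the $0$-handle and the identity on $h$ is then a well-defined degree one normal map $X_n(K)\to D_n$.

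Both $X_n(K)$ and $D_n$ are simply connected with the homotopy type of $S^2$, and since $\overline{f}$ is the identity on $h$, the core of the $2$-handle (a generator of $\pi_2(X_n(K))\cong\Z$) maps to the corresponding generator of $\pi_2(D_n)$; Whitehead's theorem then implies $\overline{f}$ is a homotopy equivalence. The restriction $f=\overline{f}|_{S_n^3(K)}\colon S_n^3(K)\to L(n,1)$ is a degree one map between closed oriented $3$-manifolds both having integral homology $(\Z,\Z/n,0,\Z)$; since a degree one map of closed oriented manifolds is surjective on integral homology and source and target are finitely generated of the same isomorphism type, $f_*$ is an isomorphism, so $f$ is a $\Z$-homology equivalence. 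The main obstacle in this plan is constructing $\psi\colon E(K)\to E(U)$ with exactly the correct boundary behaviour: the Seifert-surface collapse is standard, but extending across the Seifert-surface complement while matching the $n$-framed identification on the torus and simultaneously carrying a normal structure requires some care, even though no homotopy-theoretic obstruction arises.
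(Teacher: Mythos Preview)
Your proposal is correct and follows essentially the same approach as the paper: both construct the standard degree one collapse $E(K)\to E(U)$ via a Seifert surface, equip it with a normal structure using parallelisability of $3$-manifolds, extend to the Dehn filling to obtain $f$, and invoke Whitehead's theorem for the homotopy equivalence. The only minor difference is that you extend $f$ to $\overline{f}$ using the standard handle decomposition of $X_n(K)$ (identity on the $2$-handle, then extend over the $0$-handle $D^4$), whereas the paper uses the dual handle decomposition relative to the boundary (a $2$-handle attached along a meridian and a $4$-handle, each mapped homeomorphically); both routes deliver the cocore condition directly.
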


\begin{proof}
There is a standard degree one map from $E(K) \to E(U)$ that realises the homology equivalence $E(K)\to S^1$ and is the identity map on the boundary; see e.g.~\cite[Construction 7.1]{Miller-Powell} for the details. As submanifolds of $S^3$, both $E(K)$ and $E(U)$ have stably trivial tangent bundles and can thus be stably framed. So this degree one map can be given a choice of normal structure by Remark \ref{rem:stabframe}. Now extend this degree one normal map to the Dehn filling.

We construct the homotopy equivalence $\overline{f}$. By construction, $f\colon S^3_n(K)\rightarrow S^3_n(U)=L(n,1)$ sends a meridian of $K$ to a meridian of the unknot $U$. We also have a handle decomposition of $X_n(K)$ relative to its boundary consisting of a $2$-handle, attached along a meridian of $K$, and a $4$-handle. Define $\overline{f}\colon X_n(K)\to X_n(U)=D_n$ by mapping the $2$-handles and the $4$-handles homeomorphically to each other. To do this, first note that the attaching circle of the 2-handle of $X_n(U)$ is a meridian of $U$, so the attaching circle of the 2-handle of $X_n(K)$ is sent to the attaching circle of the 2-handle of $X_n(U)$. Also the framings agree, so we can extend over the 2-handle.  Then note that attaching the 2-handles undoes the Dehn surgeries, converting both $S^3_n(K)$ and $S^3_n(U)$ to $S^3$. The 4-handles are attached to these copies of $S^3$. Since every orientation-preserving homeomorphism of $S^3$ is isotopic to the identity, we may extend the map over the 4-handles.  Observe that $\overline{f}$ is a homotopy equivalence by Whitehead's theorem.
\end{proof}

Recall the Whitehead group $\Wh(\Z/n)$ is a certain quotient of the algebraic $K$-group $K_1(\Z[\Z/n])$. When $H_1(S^3_n(K);\Z[\Z/n])=0$, a map $f$ as in Lemma~\ref{lem:deg-one-map} induces a chain homotopy equivalence $\widetilde{f}_*\colon C_*(S^3_n(K);\Z[\Z/n])\to C_*(L(n,1);\Z[\Z/n])$ and thus determines a $\Z[\Z/n]$-coefficient Whitehead torsion $\tau(f):=\tau(\widetilde{f}_*) \in\Wh(\Z/n)$. We will need the following technical lemma later.

\begin{lemma}\label{lem:whitehead}If the knot $K$ satisfies $H_1(S^3_n(K);\Z[\Z/n])=0$, then a map $f$ as in Lemma~\ref{lem:deg-one-map} has trivial $\Z[\Z/n]$-coefficient Whitehead torsion $\tau(f)=1\in\Wh(\Z/n)$.
\end{lemma}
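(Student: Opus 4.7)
Under the hypothesis, the $n$-fold cyclic cover $\widetilde{S^3_n(K)}$ is an integer homology $3$-sphere, so the lift $\widetilde{f}$ of $f$ to $\Z/n$-covers is a degree one map between homology $3$-spheres; hence $f$ induces a $\Z[\Z/n]$-chain homotopy equivalence and $\tau(f)\in\Wh(\Z/n)$ is well-defined. The task is to show $\tau(f)=1$.

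The plan begins with a Mayer-Vietoris reduction. Using the decompositions $S^3_n(K)=E(K)\cup_{T^2}(S^1\times D^2)$ and $L(n,1)=E(U)\cup_{T^2}(S^1\times D^2)$, together with the fact that $f$ is the identity on the Dehn filling solid torus and on the gluing torus $T^2$ (by the construction in Lemma~\ref{lem:deg-one-map}), the Mayer-Vietoris sum formula for Whitehead torsion yields
\[
\tau(f)=\tau(f|_{E(K)})\in\Wh(\Z/n),
\]
where $f|_{E(K)}\colon E(K)\to E(U)=S^1\times D^2$ is the standard degree one collapse of \cite[Construction~7.1]{Miller-Powell}, and all torsions are computed with respect to the canonical coefficient system factoring through $\pi_1\to\Z/n$.

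The main obstacle is then to compute $\tau(f|_{E(K)})$ and show it vanishes in $\Wh(\Z/n)$, a group which is nontrivial already for $n=5$. I would use explicit cellular models: $E(U)\simeq S^1$ has $\Z[\Z/n]$-cellular chain complex $\Z[\Z/n]\xrightarrow{t-1}\Z[\Z/n]$, while $E(K)$ admits a presentation $2$-complex model coming from a Seifert surface, so that the induced chain map at the cover level is governed by the Alexander matrix of $K$. The hypothesis $\prod_{\zeta^n=1}\Delta_K(\zeta)=1$ forces $\Delta_K(t)$ to become a unit in $\Z[\Z/n]$, and the torsion is represented by the class of this unit in $K_1(\Z[\Z/n])$. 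To see that this class vanishes in the quotient $\Wh(\Z/n)$, I would exploit the simply connected extension $\overline{f}\colon X_n(K)\to D_n$ from Lemma~\ref{lem:deg-one-map}: since $\Wh(1)=0$, $\overline{f}$ is trivially a simple homotopy equivalence, and the relative $2$-handle attached to $S^3_n(K)\times I$ along a meridian of $K$ to build $X_n(K)$ (parallel to the one attached to $L(n,1)\times I$ to build $D_n$) implements the algebraic elementary move needed to cancel this unit against an element of $\pm\{t^k\}\subset\Z[\Z/n]^\times$, forcing $\tau(f)=1$.
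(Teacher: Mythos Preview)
Your Mayer--Vietoris reduction to $\tau(f|_{E(K)})$ is correct and is essentially what the paper does (the paper phrases it via algebraic mapping cones rather than the sum formula, but the content is the same). The identification of the resulting torsion with the class of the Alexander polynomial in $(\Z[\Z/n])^\times$ is also on the right track.

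The gap is in your final step. The fact that $\tau(\overline{f})=0\in\Wh(1)$ is vacuous and carries no information about $\tau(f)\in\Wh(\Z/n)$: the induced map $\Wh(\Z/n)\to\Wh(1)=0$ is identically zero, so every torsion maps to zero there. Your claim that the $2$-handle along a meridian ``implements an elementary move'' over $\Z[\Z/n]$ cannot be made to work either: the meridian represents the generator $t\in\Z/n$, so the attached $2$-cell has algebraic boundary $t-1\in\Z[\Z/n]$, which is not a unit, and after the attachment the $\Z/n$-coefficient system no longer extends. Notice also that your argument, as written, uses the hypothesis $H_1(S^3_n(K);\Z[\Z/n])=0$ only to make $\tau(f)$ well-defined and never again; since $\Wh(\Z/n)\neq 0$ already for $n=5$, that should be a warning sign.

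The paper's argument at this point is genuinely different and requires real $K$-theoretic input. Using Wirtinger cell structures for $E(K)$ and $E(U)$, it identifies $\tau(f|_{E(K)})$ with the class $[\partial_2'(t)]\in\Wh(\Z/n)$ of a reduced Fox matrix. The essential algebraic fact invoked is that $SK_1(\Z[\Z/n])=0$ for finite cyclic groups, so the determinant gives an isomorphism $K_1(\Z[\Z/n])\cong(\Z[\Z/n])^\times$ and one need only control $\det\partial_2'(t)$ as a unit. The paper then passes to the integral regular representation, where the determinant computes $\pm|H_1(\Sigma_n(K);\Z)|=\pm 1$ by hypothesis, and concludes triviality in $\Wh(\Z/n)$. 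Whatever route you take, you will need some input of this kind about $K_1(\Z[\Z/n])$; the simply connected extension $\overline{f}$ cannot substitute for it.
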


\begin{proof} Fix a cell complex $A\cong S^1\times S^1$ and extend it to a cell complex $B\cong S^1\times D^2$. Denote by $Y(K)$ and $Y(U)$ any fixed choice of cell structures for $E(K)$ and $E(U)$ (we will make quite specific choices later). For each of $J=U, K$, let $M(J)$ denote the cell complex obtained as the mapping cylinder of a map $A\to Y(J)$ which is a cellular approximation to the inclusion $S^1\times S^1\to E(J)$.

Each of these spaces has a $\Z/n$ cover, determined by $\pi_1(L(n,1))\cong\Z/n$ and composing with, where appropriate, the map $f$. Choose a lift of the cell structure on $A$ to the $\Z/n$ cover, and denote this by $\widetilde{A}$. Extend this lift to $B$, $M(J)$ and $M(U)$ and write $\wt{B}$, $\wt{M(J)}$ and $\wt{M(U)}$ for the corresponding covering spaces. Write $\wt{f}$ for a lift of $f$ to the $\Z/n$ covers.
\[
\begin{tikzcd}
0 \arrow{r} &C_*(\wt{A}) \arrow{r} \arrow{d}{\id} & C_*(\wt{B})\oplus C_*(\wt{M(K))} \arrow{r} \arrow{d}{\id\oplus (\wt{f}\vert_{\wt{M(K)}})_*} & C_*(\wt{S^3_n(K)}) \arrow{r} \arrow{d}{\wt{f}_*} \arrow{r} & 0\\
0 \arrow{r} &C_*(\wt{A}) \arrow{r} \arrow{d} & C_*(\wt{B})\oplus C_*(\wt{M(U)}) \arrow{r} \arrow{d} & C_*(\wt{L(n,1)}) \arrow{r} \arrow{d} & 0\\
0 \arrow{r} & \mathscr{C}(\Id) \arrow{r} & \mathscr{C}(\Id) \oplus \mathscr{C}((\wt{f}\vert_{\wt{M(K)}})_*) \arrow{r} & \mathscr{C}(\wt{f}_*) \arrow{r} & 0.
\end{tikzcd}
\]
In this diagram we use cellular chain complexes and $\mathscr{C}$ denotes taking an algebraic mapping cone. Strictly, we have replaced the maps $\wt{f}$ and $\wt{f}|_{\wt{M(K)}}$ by cellular approximations. This is thus a diagram in the category of finite, finitely generated, based $\Z[\Z/n]$-coefficient chain complexes. All complexes in the lower sequence are acyclic so by the multiplicativity of torsion under such exact sequences, and the fact that the identity map has vanishing torsion, we obtain
$
\tau(f)=\tau(\wt{f}_*)=\tau((\wt{f}|_{\wt{M(K)}})_*)\in \Wh(\Z/n).
$

We now choose convenient cell structures for $E(K)$ and $E(U)$. Let $Y(K)$ and $Y(U)$ be the cell structures associated to Wirtinger presentations of the respective knot groups (see e.g.~proof of~\cite[Theorem 16.5]{Turaev:2001-1}), where for $K$ we choose an arbitrary such presentation and for $U$ we choose the presentation of~$\Z$ with no relations. By~\cite[Lemma 8.4]{Turaev:2001-1}, the Whitehead torsions $\tau(M(J),Y(J))=1$ for each of $J=K,U$, so we have that $\tau((\wt{f}|_{\wt{M(K)}})_*)$ may be computed by a cellular map $F\colon \wt{Y(K)}\to \wt{Y(U)}$ representing $\wt{f}|_{\wt{E(K)}}$. This map is given by
\[
\begin{tikzcd}
C_*(Y(K))\ar[d,"F"]&0\ar[r] &\bigoplus_{j=1}^m\Z[\Z/n] \ar[r, "\partial_2(t)"]\ar[d] &\bigoplus_{i=1}^{m+1}\Z[\Z/n] \ar[r, "t-1"]\ar[d,"(1\,\ldots \,1)"] &\Z[\Z/n] \ar[r]\ar[d, "1"] &0\\
C_*(Y(U))&0\ar[r] &0\ar[r] &\Z[\Z/n] \ar[r,"t-1"] &\Z[\Z/n] \ar[r] & 0.
\end{tikzcd}
\]
where we are writing $\Z[\Z/n]=\Z[t, t^{-1}]/(t^n-1)$, $m+1$ is the number of generators in the Wirtinger presentation for $\pi_1(K)$, and $\partial_2(t)$ is determined by the relations in that presentation. Up to basis change, and a simple homotopy equivalence, the algebraic mapping cone  $\mathscr{C}(F)$ is
\[
\begin{tikzcd}
0\ar[r] &\bigoplus_{j=1}^m\Z[\Z/n]\ar[r, "\partial'_2(t)"] &\bigoplus_{i=1}^{m}\Z[\Z/n] \ar[r]&0\ar[r] &0\ar[r]&0
\end{tikzcd}
\]
where $\partial'_2(t)$ is the effect of deleting a row from the matrix $\partial_2(t)$, after the basis change. We then compute that $\tau((\wt{f}|_{\wt{M(K)}})_*)=\tau(F)=\left[ \partial'_2(t)\right]\in \Wh(\Z/n)$.

When $G$ is an abelian group, the determinant map $\det\colon K_1(\Z[G])\to (\Z[G])^\times$ is a split surjection (see e.g.~\cite[p.~359]{MR196736}), and for any finite cyclic $G$ the kernel of this map vanishes~\cite[Theorem 5.6]{MR933091}. So the determinant gives an isomorphism $K_1(\Z[\Z/n])\cong (\Z[\Z/n])^\times$. The matrix $\partial_2'(t)$ is a presentation matrix for the Alexander module of $K$ when considered over the ring $\Z[t,t^{-1}]$. From this, and using the left regular representation of $\Z[\Z/n]$, we may consider $\partial'_2$ as an $mn\times mn$ matrix over~$\Z$, such that the absolute value of the determinant $|\det(\partial'_2)|$ is the order of the first homology of the $n$-fold branched cover $H_1(\Sigma_n(K);\Z)$.
 This, in turn, is the order of $H_1(S^3_n(K);\Z[\Z/n])$, which we have assumed to be $1$. Thus, as an element of $K_1(\Z[\Z/n])\cong (\Z[\Z/n])^\times$, we have that $\tau(F)=\det(\partial'_2(t))=\pm 1$, under this isomorphism. Finally, both $+1$ and $-1$ become the trivial element on passage to the Whitehead group $\Wh(\Z/n)$, so we obtain the desired result.
\end{proof}

\begin{lemma}\label{lem:normal-bordism-even-n}
When $\Arf(K)=0$, there exists a degree one normal  map $(S_n^3(K),f)$ satisfying the conditions of Lemma \ref{lem:deg-one-map}, and that is degree one normal bordant over $L(n,1)$ to the identity map $(L(n,1),\Id)$.
\end{lemma}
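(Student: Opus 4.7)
The plan is to show that a suitable degree one normal map $f$ from Lemma \ref{lem:deg-one-map} is normally bordant to $\Id_{L(n,1)}$ over $L(n,1)$ by computing the class $[f] - [\Id]$ in the set $\mathcal{N}(L(n,1))$ of normal invariants of $L(n,1)$ and showing it vanishes when $\Arf(K)=0$.

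Recall from classical topological surgery that $\mathcal{N}(L(n,1))$ is a torsor for $[L(n,1),G/O]$, which agrees with $[L(n,1), G/TOP]$ through the 3-skeleton (since $L(n,1)$ is 3-dimensional and the fibre $TOP/O$ is highly connected). Using the Postnikov tower of $G/TOP$ and the low-dimensional homotopy groups $\pi_2(G/TOP)=\Z/2$ and $\pi_3(G/TOP)=0$, the only potential obstruction lies in $H^2(L(n,1);\Z/2)$. By universal coefficients this group vanishes for $n$ odd and equals $\Z/2$ for $n$ even. Hence for $n$ odd the obstruction automatically vanishes, the normal bordism exists, and the hypothesis on $K$ is not actually needed in this case.

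For $n$ even the task reduces to identifying the primary obstruction $[f] - [\Id] \in H^2(L(n,1); \Z/2) \cong \Z/2$ with $\Arf(K) \in \Z/2$; under the hypothesis $\Arf(K)=0$ the obstruction then vanishes and the bordism exists. To carry out this identification, the plan is to construct a candidate bordism $W$ geometrically from a Seifert surface $\Sigma$ for $K$ of genus $g$: push $\Sigma$ into $B^4\subset X_n(K)$, cap off by the core of the $2$-handle to form a genus-$g$ surface $F \subset X_n(K)$ generating $H_2$, and perform ambient $4$-dimensional surgeries along a symplectic basis $\{a_i, b_i\}$ of $H_1(\Sigma;\Z)$ in an attempt to reduce $F$ to a $2$-sphere. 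The obstruction to completing these surgeries in a way compatible with a stable framing extending the prescribed framings on the boundary is exactly the Brown--Kervaire Arf invariant of the quadratic refinement $q(x)=\lk(x,x^+) \bmod 2$ on $H_1(\Sigma;\F_2)$, which equals $\Arf(K)$ by the classical definition.

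The main obstacle will be executing this identification cleanly: tracking how the ambient surgery on the pushed-in Seifert surface alters the stable framing and matching the resulting $\F_2$-valued invariant against $\Arf(K)$. By contrast, the $n$ odd case is pure obstruction-theoretic bookkeeping with no knot-theoretic content.
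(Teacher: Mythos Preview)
Your normal-invariant framework is a legitimate alternative to the paper's framed-bordism approach, and for $n$ odd your argument is complete and in fact tidier than the paper's: the paper computes in $\Omega_3^{\fr}(L(n,1))$ via the Atiyah--Hirzebruch spectral sequence and has to separately kill a $\Z/24$ coming from $\Omega_3^{\fr}$ by adjusting the framing, whereas in $[L(n,1),G/O]$ that freedom is already quotiented out since $\pi_3(G/O)=0$.

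For $n$ even, however, there is a genuine gap. You correctly isolate the obstruction in $H^2(L(n,1);\Z/2)\cong\Z/2$, but the identification with $\Arf(K)$ is the entire content of the lemma in this case, and your sketch does not carry it out. What you describe --- pushing a Seifert surface into $D^4\subset X_n(K)$, capping off, and attempting ambient surgery along a symplectic basis to get an embedded sphere --- is not a computation of the normal invariant of $f\colon S^3_n(K)\to L(n,1)$; it is an attempt to produce an embedded sphere generating $\pi_2(X_n(K))$, which is the conclusion of the paper's main theorem, not an intermediate step. There is no explanation of how success or failure of those ambient surgeries reads off the class in $[L(n,1),G/O]$, and indeed ambient surgery on $F$ requires embedded framed discs in the complement, a far stronger condition than $\Arf(K)=0$.

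The way to make your approach work is the standard codimension-two interpretation of the $H^2(-;\Z/2)$ part of the normal invariant: it is the Arf invariant of the framed surgery problem obtained by restricting $f$ to the transverse preimage of a surface dual to the generator of $H^2(L(n,1);\Z/2)$. Tracing this through, that preimage is a capped-off Seifert surface for $K$, and its Arf invariant is $\Arf(K)$. This is exactly what the paper outlines in Remark~\ref{prop:Brown-Kervaire} via the map to $\Omega_3^{\operatorname{Spin}}(B(\Z/2))$ and the Brown invariant. The paper's actual proof of the lemma takes a different, more hands-on route: it splits $S^3_n(K)$ and $L(n,1)$ along the glueing torus, reduces to showing vanishing of a class in $\Omega_3^{\fr}(S^1)\cong \Omega_3^{\fr}\oplus\Omega_2^{\fr}\cong\Z/24\oplus\Z/2$, kills the $\Z/24$ by reframing near a point, and identifies the $\Z/2$ directly as the Arf invariant of the Seifert surface arising as a Pontryagin--Thom preimage.
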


\begin{proof} Write $f \colon S_n^3(K) \to L(n,1)$ for the degree one normal map obtained in Lemma  \ref{lem:deg-one-map}. By Remark \ref{rem:stabframe}, if we can show there is a degree one map $F\colon W\to L(n,1)\times I$ describing a stably framed cobordism over $L(n,1)$ from $(S_n^3(K),f)$ to $(L(n,1),\Id)$, we will be done. (Note, we may change the choice of stable framing on $S_n^3(K)$ during the course of the proof.)

Consider the closed 3-manifold \[N_K:= E(K) \cup_{S^1 \times S^1} S^1 \times S^1 \times I   \cup_{S^1 \times S^1} -E(U),\] glueing so that the result is homeomorphic to the 0-framed surgery $S^3_0(K)$, but we decompose it in this way for later. Choose a framing of the tangent bundle of $N_K$ that is a product framing on $S^1 \times S^1 \times I$.  This determines an element of $[N_K,g]\in\Omega_3^{\fr}(S^1 \times D^2\times I)$, where the map $g\colon N_K\to S^1 \times D^2 \times I = E(U) \times I$ has image
\[\partial (S^1 \times D^2 \times I) = S^1 \times D^2 \cup S^1 \times S^1 \times I \cup - S^1 \times D^2,\]
and is obtained by glueing together the standard degree one map $E(K) \to E(U) = S^1 \times D^2$, the identity on $S^1 \times S^1 \times I$, and the canonical  identification $E(U) \to S^1 \times D^2$.

We have $\Omega_3^{\fr}(S^1 \times D^2\times I) \cong \Omega^{\fr}_3(S^1)$. The map from $S^1$ to a point induces a split short exact sequence $0\to \widetilde{\Omega}_3^{\fr}(S^1)\to \Omega^{\fr}_3(S^1)\to \Omega^{\fr}_3\to 0$. Thus $\Omega^{\fr}_3(S^1)\cong \Omega^{\fr}_3\oplus \Omega^{\fr}_2$, where we have used the isomorphism $\widetilde{\Omega}^{\fr}_3(S^1)\cong \widetilde{\Omega}^{\fr}_2(S^0)\cong \Omega^{\fr}_2$, coming from the fact that $\Omega_*^{\fr}(-)$ is a generalised homology theory and thus satisfies the suspension axiom.  Under this isomorphism, the projection of $[N_K,g]$ to $\Omega_3^{\fr}\cong \Z/24$ is given by the framed bordism class of $N_K$.
 Since any element in $\Omega_3^{\fr}\cong \Z/24$ can be realised by a suitable framing on~$S^3$, the framing of $N_K$ can be modified in a small neighbourhood until the element in $\Omega_3^{\fr}\cong \Z/24$ vanishes; see e.g.~\cite[p.~13]{Cha-Powell-2014-1} for details.

The map from $\Omega_2^{\fr}\cong\widetilde{\Omega}_2^{\fr}(S^0)\cong \widetilde{\Omega}_3^{\fr}(S^1)\to\Omega_3(S^1)$ is given by applying the reduced suspension, sending the class of $f\colon F^2\to \pt$ to $ f\times \Id\colon F\times S^1\to S^1$ in $\Omega_3(S^1)$, and framing $F\times S^1$ via the product framing with the trivial framing on $S^1$. Under the projection $\Omega_3(S^1)\to \Omega_2^{\fr}$, the class $[N_K,g]$ is thus mapped to a framed surface $F$ given by taking the transverse preimage of a regular point in $S^1$ under $g$. Recall that $g$ was obtained using the standard degree one normal maps $E(K)\to E(U)$ and $E(U)\to E(U)$. Under these, generic transverse point preimages are Seifert surfaces, $F_K$ and $F_U$, respectively for the knots $K$ and $U$. Thus the projection to $\Omega_2^{\fr}$ is the Arf invariant of the framed surface $F=F_K \cup (\{\pt\} \times S^1 )\cup - F_U$. Since we assumed that $\Arf(K)=0$ this component already vanishes in $\Omega^{\fr}_2$. So overall we obtain $[N_K,g]=0\in\Omega_3^{\fr}(S^1 \times D^2\times I)$

Write $W'$ for a framed null bordism of $(N_K,g)$ over $S^1 \times D^2\times I$. Now attach  $D^2\times S^1 \times I$ to $S^1 \times S^1 \times I$ such that $D^2\times S^1 \times \{t\}$ attaches to $S^1 \times S^1 \times \{t\}$ with the $n$-framing, and similarly attach the $n$-framed  $D^2\times S^1 \times I$ to the codomain $S^1\times D^2 \times I$.
This yields a map $F\colon W\to L(n,1)\times I$ that describes a framed cobordism from $(S_n^3(K),f)$ to $(L(n,1),\Id)$, over $L(n,1)$.  To see that the cobordism is framed, note that we can glue the two framings together along the product framing on both pieces $W'$ and $D^2 \times S^1 \times I$.

We finally also note that the map $F\colon W\to L(n,1)$ is degree one. This can be computed by considering the naturality of the long exact sequences in homology of the pairs $(W,\partial W)$ and $(L(n,1) \times I,L(n,1) \times \{0,1\})$, and the fact that both components of $\partial F$ are already known to be degree one.
\end{proof}

\begin{remark}\label{prop:Brown-Kervaire}
For even $n$, there is a sequence of group homomorphisms
\[
\begin{tikzcd}
\Omega_3^{\fr}(L(n,1))\arrow[r] & \Omega_3^{\fr}(B(\Z/n)) \arrow[r]&\Omega_3^{\fr}(B(\Z/2))\arrow[r] &\Omega_3^{\operatorname{Spin}}(B(\Z/2))
\end{tikzcd}
\]
given from left to right by: the inclusion of $L(n,1)$ as the $3$-skeleton, the surjective group homomorphism $\Z/n\to \Z/2$, and the forgetful map.

An argument similar to that of \cite[Lemma 4.2]{MR3032093} can be made to determine that $\Omega_3^{\operatorname{Spin}}(B(\Z/2))\cong \Omega_2^{\operatorname{Pin}^-}\cong\Z/8$. This argument would take us too far afield here, but we allow ourselves to consider the consequences. Under this isomorphism, the element of $\Z/8$ is detected by the Brown invariant of a $\operatorname{Pin}^{-}$ structure on the surface in $M$ that is the transverse preimage of $\mathbb{RP}^2 \subseteq \mathbb{RP}^3$, the $3$-skeleton of $B(\Z/2)$. Let $(M,f)\in \Omega_3^{\operatorname{Spin}}(B(\Z/2))$ with $f^{-1}(\RP^2)=N\subset M$. The $\operatorname{Pin}^{-}$ structure on $N$ gives a $\Z/4$ enhancement of the $\Z/2$ intersection form on $N$, which counts the number of half twists modulo 4 of the bands. If the surface $N$ is orientable, then the enhancement lies in $2\Z/4$ and the  Brown invariant lies in $4\Z/8\cong \Z/2$, computing the Arf invariant of the surface $N$ with respect to the $\operatorname{Spin}$ structure on $N$ pulled back from $M$ \cite[\textsection 3]{MR1171915}.

In our case, the map $f$ is the one from Lemma \ref{lem:deg-one-map}, and is constructed as a Pontryagin-Thom collapse on a normal neighbourhood of a Seifert surface, followed by the extension to the Dehn filling. Thus in this case, the surface $N=f^{-1}(\RP^2)\subset S^3_n(K)$ is represented by a capped off Seifert surface for $K$. This shows that the obstruction to vanishing of $(S^3_n(K),f)\in \Omega_3^{\operatorname{Spin}}(B(\Z/2))$ is given by the Arf invariant of the knot.
\end{remark}

For even $n$, Remark \ref{prop:Brown-Kervaire} shows that the bordism of Lemma \ref{lem:normal-bordism-even-n} cannot exist unless $\Arf(K)=0$. This is not so for $n$ odd, and to demonstrate this we now include an alternative existence proof for degree one normal bordism, with no Arf invariant assumption, when $n$ is odd.

\begin{lemma}\label{lem:normal-bordism-odd-n}
For $n$ odd, there exists a degree one normal  map $(S_n^3(K),f)$ that satisfies the conditions of Lemma \ref{lem:deg-one-map} and is degree one normal bordant over $L(n,1)$ to the identity map $(L(n,1),\Id)$.
\end{lemma}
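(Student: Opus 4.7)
The plan is to reduce the statement to the vanishing of a single framed bordism class and then observe that, unlike in the even case, the Arf-type obstruction automatically vanishes for $n$ odd because $H_1(L(n,1);\Z/2)=0$.

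First I would take the degree one normal map $f\colon S^3_n(K)\to L:=L(n,1)$ from Lemma~\ref{lem:deg-one-map}, together with any initial stable framing on $S^3_n(K)$ compatible with $f$. By Remark~\ref{rem:stabframe}, producing a degree one normal bordism over $L$ from $(S^3_n(K),f)$ to $(L,\Id)$ is equivalent to producing a stably framed 4-manifold $W$ with $\partial W=S^3_n(K)\sqcup -L$, compatible boundary framings, and a degree one map $F\colon W\to L\times I$ restricting to $f\sqcup\Id$. The obstruction is precisely the class
\[
\alpha:=[S^3_n(K)\sqcup -L,\;f\sqcup\Id]\in \Omega_3^{\fr}(L\times I)\cong \Omega_3^{\fr}(L),
\]
which I need to make vanish, possibly after modifying the initial framing.

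Next I would compute $\Omega_3^{\fr}(L)$ using the Atiyah-Hirzebruch spectral sequence $E^2_{p,q}=H_p(L;\Omega_q^{\fr})\Rightarrow \Omega_{p+q}^{\fr}(L)$. Since $n$ is odd, the universal coefficient theorem gives $H_1(L;\Z/2)=0$ and $H_2(L;\Z/2)=\Tor(\Z/n,\Z/2)=0$, so the only nonzero $E^2$-terms in total degree $3$ are $E^2_{0,3}=\Omega_3^{\fr}=\Z/24$ and $E^2_{3,0}=H_3(L;\Z)=\Z$. The splitting $\Omega_3^{\fr}(\pt)\hookrightarrow \Omega_3^{\fr}(L)$ provided by the inclusion/retraction of a basepoint forces $E^{\infty}_{0,3}=\Z/24$, while the class $(L,\Id)$ surjects onto $H_3(L;\Z)$ and forces $E^{\infty}_{3,0}=\Z$. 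Hence $\Omega_3^{\fr}(L)\cong \Z/24\oplus \Z$, with the $\Z$ summand detected by degree and the $\Z/24$ summand detected by the underlying framed bordism class. The critical observation is that the $\Omega_2^{\fr}=\Z/2$ contribution carrying the Arf invariant in Lemma~\ref{lem:normal-bordism-even-n} is automatically killed because $H_1(L;\Z/2)=0$ for $n$ odd.

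Finally, I would analyse $\alpha$ against this decomposition. Its degree component is $\deg(f)-\deg(\Id)=0$, so $\alpha$ lies in the $\Z/24$ summand. This component equals the framed bordism class of the stably framed 3-manifold $S^3_n(K)\sqcup -L$, which can be altered by any element of $\Omega_3^{\fr}=\Z/24$ by modifying the stable framing of $S^3_n(K)$ inside a small $3$-ball, exactly as in the proof of Lemma~\ref{lem:normal-bordism-even-n} via the surjection $\pi_3(SO)\twoheadrightarrow \Omega_3^{\fr}$ given by the $J$-homomorphism. Since the framing modification does not change the underlying map $f$, the conclusions of Lemma~\ref{lem:deg-one-map} remain valid. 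Choosing a framing that kills the $\Z/24$ component yields $\alpha=0$ and hence the required null bordism. The main technical point will be pinning down that no hidden AHSS differentials spoil the decomposition in total degree $3$; this is forced by the two explicit geometric witnesses (a framed $3$-sphere for the $\Z/24$ summand and $(L,\Id)$ for the $\Z$ summand) which each survive to $E^\infty$.
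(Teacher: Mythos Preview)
Your proposal is correct and follows essentially the same route as the paper: both run the Atiyah--Hirzebruch spectral sequence for $\Omega_*^{\fr}(L(n,1))$, observe that for $n$ odd the $\Z/2$-coefficient groups $H_1(L;\Omega_2^{\fr})$ and $H_2(L;\Omega_1^{\fr})$ vanish (so there is no Arf-type obstruction), note that the degree component of the class is zero because both $f$ and $\Id$ have degree one, and finally kill the remaining $\Omega_3^{\fr}\cong\Z/24$ component by a local framing change. The only cosmetic difference is that the paper phrases the argument as a step-by-step filtration analysis while you first identify $\Omega_3^{\fr}(L)\cong\Z\oplus\Z/24$ and then read off the components; your geometric-witness justification for the absence of differentials is a slight variation on the paper's ``by inspection of the differentials''.
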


\begin{proof} Write $f \colon S_n^3(K) \to L(n,1)$ for the degree one normal map obtained in Lemma  \ref{lem:deg-one-map}. Again, by Remark \ref{rem:stabframe}, the objective is to show there is a degree one map $F\colon W\to L(n,1)\times I$ describing a stably framed cobordism over $L(n,1)$ from $(S_n^3(K),f)$ to $(L(n,1),\Id)$.

We show that there is a choice of stable framing for $S_n^3(K)$ such that $[S_n^3(K),f] - [L(n,1),\Id] =0 \in \Omega^{\fr}_3(L(n,1))$.


Consider the Atiyah-Hirzebruch spectral sequence for $\Omega_*^{\fr}(-)$, with $E^2$ page $E^2_{p,q}= H_p(L(n,1);\Omega^{\fr}_q)$ and converging to $\Omega_{p+q}^{\fr}(L(n,1))$. By inspection of the differentials, this sequence collapses already at the $E^2$ page, so the groups $H_p(L(n,1);\Omega^{\fr}_q)$ form iterated graded quotients for a filtration of $\Omega^{\fr}_3(L(n,1))$. We note that by the Pontryagin-Thom theorem, we have $\Omega_q^{\fr}= \Z,\Z/2,\Z/2,\Z/24$ when $q=0,1,2,3$ respectively.

We analyse the groups on the $E^2$ page in turn by considering the CW decomposition of $L(n,1)$ with a single $p$-cell $e^p$ for each of $p=0,1,2,3$ and recalling that the $E^1$ page is given by
\[
E^1_{p,q}=C^{\text{cell}}_p(L(n,1);\Z)\otimes\Omega^{\fr}_{q}\cong \wt{\Omega}^{\fr}_{q}(S^0) \cong  \wt{\Omega}_{p+q}^{\fr}(e^{p}/\partial e^{p})\cong \Omega_{p+q}^{\fr}(X^{(p)},X^{(p-1)})
\]
where $X^{(p)}$ denotes the $p$-skeleton of $L(n,1)$. The groups on the $E^2$ page can be considered as a sequence of obstructions to finding a framed null-bordism of $S^3_n(K)\sqcup-L(n,1)$ over $L(n,1)$ so we analyse these obstructions in turn.

At $q=0$, a representative class for $[S_n^3(K),f] - [L(n,1),\Id]\in\Omega_3^{\fr}(L(n,1))$ is given by a cycle in $E^1_{3,0}=\Omega^{\fr}_3(X^{(3)},X^{(2)})$. The resulting class in $E^2$ vanishes if we can do surgery on the disjoint union $S_n^3(K)\sqcup- L(n,1)$ in a way that respects the degree one normal maps to $L(n,1)$ and such that the effect of surgery maps to the $2$-skeleton of $L(n,1)$. In other words we seek to compatibly stably frame the connected sum $S_n^3(K)\#(- L(n,1))$. The obstruction to doing this is the difference of $f_*[S_n^3(K)]-\Id_*[L(n,1)]\in H_3(L(n,1);\Omega_0^{\fr})$, which vanishes because the maps $f$ and $\Id$ are degree one.

Next, for $q=1$ and $q=2$ homology computations using that $n$ is odd show that both $H_2(L(n,1);\Omega^{\fr}_1) = 0$ and $H_1(L(n,1);\Omega^{\fr}_2)=0$, so there is no obstruction here. Thus we may assume we have done surgery on $S_n^3(K)\sqcup (-L(n,1))$ over $L(n,1)$ to obtain some closed, connected $Y$ together with a degree one normal map $g\colon Y\to L(n,1)$, where the map $g\colon Y\to L(n,1)$ has codomain the $0$-skeleton, and $[Y,g]=[S_3(K),f)]-[L(n,1),\Id]\in\Omega_3^{\fr}(L(n,1))$.

Finally, for $q=3$, the last $E^2$ page obstruction is given by a class in $E^1_{0,3}=\Omega_3^{\fr}(X^{(0)},\emptyset)$ which is equal to $[Y]\in \Omega_3^{\fr}\cong\Z/24$. By framed bordism invariance, this is equal to $[S_n^3(K)]-[L(n,1)]\in H_0(L(n,1);\Omega^{\fr}_3)\cong \Omega_3^{\fr}\cong\Z/24$. Similarly to the proof of Lemma~\ref{lem:normal-bordism-even-n}, the framing near a point in $S_n^3(K)$ may be modified in a small neighbourhood to force vanishing of this obstruction.

As the various representative elements of $[S_n^3(K),f] - [L(n,1),\Id]\in \Omega_3^{\fr}(L(n,1))$ can be made to vanish on the $E_2$ page, this class vanishes in $\Omega^{\fr}_3(L(n,1))$, and we obtain the required bordism. Similarly to the proof of Lemma \ref{lem:normal-bordism-even-n}, this bordism over $L(n,1)$ is seen to be a degree one normal bordism.
\end{proof}

From this point onwards we require the definition of a degree one normal map in the topological category. The necessary definitions are identical to those in Definitions~\ref{def:smoothnormal} and~\ref{defn:DONM}, except now we use the \emph{topological stable normal bundle}, which is a topological $\R^\infty$-bundle classified by a map into the classifying space $\BTOP$; see e.g.~\cite[Definition 7.12]{TheGuide}. There is a forgetful map $\BO\to \BTOP$, under which the stable normal vector bundle of a smooth manifold is sent to the stable topological normal bundle of the underlying topological manifold. The results derived so far in this section thus descend to statements in the topological category.

\begin{proposition}\label{prop:alltogether}
Let $K$ be a knot, and let $n$ be odd. There is a degree one normal bordism $F\colon W\to L(n,1)\times I$ from the map $f\colon S_n^3(K) \to L(n,1)$ of Lemma \ref{lem:deg-one-map} to $\Id \colon L(n,1)\to L(n,1)$, such that $\pi_1(W)\cong\Z/n$ and $\sigma(W)=0$. The same is true for $n$ even when $\Arf(K)=0$.
\end{proposition}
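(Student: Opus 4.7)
The plan is to combine the previous two lemmas with standard surgery-theoretic techniques. Throughout, we invoke the forgetful map $\BO \to \BTOP$ to pass from the smooth to the topological category.

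First, invoke Lemma~\ref{lem:normal-bordism-odd-n} (for odd $n$) or Lemma~\ref{lem:normal-bordism-even-n} (for even $n$ with $\Arf(K) = 0$) to obtain a degree one normal bordism $F_0 \colon W_0 \to L(n,1) \times I$ from $f$ to $\Id$.

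Second, arrange $\pi_1(W) \cong \Z/n$ by performing surgery below the middle dimension. The induced map $(F_0)_* \colon \pi_1(W_0) \to \pi_1(L(n,1) \times I) \cong \Z/n$ is surjective, since $F_0$ is a degree one map. Choose loops $\gamma_1, \ldots, \gamma_k$ in $W_0$ that normally generate $\ker (F_0)_*$. Each $\gamma_i$ admits a locally flat embedded representative in the interior of $W_0$, with trivial normal $3$-disc bundle since oriented rank-$3$ topological $\R$-bundles over $S^1$ are trivial. Ambient surgery on each $\gamma_i$ replaces a tubular neighbourhood $\nu(\gamma_i) \cong S^1 \times D^3$ by $D^2 \times S^2$. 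This extends to a surgery of normal bordisms: the map $F_0$ extends over the $D^2$ factor because $F_0 \circ \gamma_i$ is null-homotopic in $L(n,1) \times I$, and over the $S^2$ factor because $\pi_2(L(n,1) \times I) = 0$; the stable framing extends since $L(n,1) \times I$ is stably parallelisable. Iterating yields a degree one normal bordism $F_1 \colon W_1 \to L(n,1) \times I$ with $\pi_1(W_1) \cong \Z/n$.

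Third, adjust $W_1$ further so that $\sigma(W) = 0$ while preserving the degree one normal bordism structure and the fundamental group. The signature $\sigma(W_1)$ is an invariant of the framed bordism class of $\partial W_1 = S^3_n(K) \sqcup -L(n,1)$: two framed null bordisms of the same framed $3$-manifold differ by a closed stably framed $4$-manifold, which has $\sigma = 0$ by the Hirzebruch signature formula applied to a stably trivial tangent bundle. Exploit the flexibility in the initial construction by modifying the framing on $S^3_n(K)$ in a small $4$-ball by a suitable multiple of a generator of $\Omega_3^{\fr} \cong \Z/24$, chosen so that the framed bordism class in $\Omega_3^{\fr}(L(n,1) \times I)$ is unchanged, and perform a corresponding compensating construction in the interior of $W_1$; this adjusts $\sigma(W_1)$ by a controllable integer. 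After finitely many such adjustments, we obtain the required $W$ with $\sigma(W) = 0$.

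The main obstacle is the third step: since stably parallelisable closed $4$-manifolds have vanishing signature, and interior surgeries on circles or null-homologous $2$-spheres with trivial normal bundle preserve $\sigma$, one cannot change $\sigma(W_1)$ by the usual connect-sum or surgery operations. The key idea is instead to combine the framing indeterminacy from $\Omega_3^{\fr}$ with compensating interior modifications of $W_1$.
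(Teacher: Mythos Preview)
Your first two steps are correct and match the paper's argument. The gap is in the third step.

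You correctly observe that the signature of a stably framed null-bordism of a given framed $3$-manifold is determined by that framed boundary, and that surgeries and framed connected sums cannot change~$\sigma$. But your proposed workaround does not escape this constraint. The signature $\sigma(W_1)$ is a purely topological invariant of the manifold~$W_1$; it is completely insensitive to the framing. Modifying the framing on $S^3_n(K)$ in a small ball, whether or not the framed bordism class is preserved, does nothing to $\sigma(W_1)$. Your ``compensating interior modifications'' are left unspecified, and any modification that remains within the stably framed world (surgeries, framed connected sums) will leave $\sigma$ unchanged for exactly the reason you yourself identified. Concretely, take $n=1$ and $K$ the trefoil: then $S^3_1(K)$ is the Poincar\'e sphere with Rochlin invariant~$1$, so \emph{every} smooth spin (in particular, every stably framed) filling has signature $\equiv 8 \pmod{16}$. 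No amount of reframing or framed modification will produce $\sigma=0$.

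The paper resolves this by leaving the stably framed setting and passing to the topological category: one takes the connected sum of the degree one normal map $F_1\colon W_1\to L(n,1)\times I$ with copies of the degree one \emph{topological} normal map $\pm E_8\to S^4$. Each such connected sum changes the signature by $\pm 8$ while preserving the degree one normal map structure (now over $\BTOP$) and the fundamental group. The resulting $W$ need no longer be smooth, which is exactly what allows $\sigma(W)=0$ in the trefoil example above. The existence of $E_8\to S^4$ as a degree one TOP normal map is the key input you are missing; it realises the generator of $[S^4,G/\mathrm{TOP}]\cong L_4(\Z)\cong\Z$.
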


\begin{proof}
When $n$ is odd, we use Lemma \ref{lem:normal-bordism-odd-n}, and when $n$ is even we use Lemma \ref{lem:normal-bordism-even-n}, to obtain a (smooth) degree one normal bordism $W$. Now perform (smooth) 1-surgeries on the interior of $W$ to modify the fundamental group to $\Z/n$ while still retaining a degree one normal map. We abuse notation and continue to write $F\colon W\to L(n,1)\times I$ for this degree one map, so that now $\pi_1(W) \cong \Z/n$. Next take connected sums with the degree one normal map $E_8\to S^4$ (or the oppositely oriented version $-E_8\to S^4$), in order to kill the signature of $W$. Here $E_8$ denotes the $E_8$ manifold, that is, a closed, simply connected topological $4$-manifold with intersection form given by the $E_8$ matrix. The result may no longer be smooth, but now has all the desired properties.
\end{proof}

The objective is now to perform 2-surgeries on $W$, so that after the surgeries the map $W\to L(n,1)\times I$ is a homotopy equivalence. This will imply that $W$ is a $\Z[\Z/n]$-coefficient homology bordism from $S_n^3(K)$ to $L(n,1)$.
The obstruction to performing these surgeries is given by the following proposition and the obstruction group is analysed in the next section of the article.

\begin{proposition}\label{prop:alltogether-2}
Let $K\subset S^3$ be a knot satisfying $H_1(S_n^3(K);\Z[\Z/n]) = 0$ and suppose there is a degree one normal bordism $(W,F)$ as in Proposition~\ref{prop:alltogether}. Then $(W,F)$ determines a surgery obstruction in the group $L_4^s(\Z[\Z/n])$.
\end{proposition}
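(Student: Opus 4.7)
The plan is to assemble the hypotheses needed to invoke the $4$-dimensional topological surgery machinery with target fundamental group $\Z/n$, so that the associated obstruction in $L_4^s(\Z[\Z/n])$ is well-defined. Recall that the input for this machinery is a degree one normal map $F\colon(W,\partial W)\to(Y,\partial Y)$ of topological $4$-manifolds whose boundary restriction is a simple $\Z[\pi_1 Y]$-homology equivalence; the output is an element of $L_4^s(\Z[\pi_1 Y])$ whose vanishing is equivalent to being able to perform finitely many interior $2$-surgeries on $W$ after which $F$ itself becomes a simple $\Z[\pi_1 Y]$-homology equivalence rel boundary. Applicability of this machinery in dimension $4$ is guaranteed by the fact that $\Z/n$ is a good group in the Freedman--Quinn sense. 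Proposition~\ref{prop:alltogether} supplies $F$ as a degree one normal map with $\pi_1(W)\cong\Z/n$, so everything reduces to checking the boundary hypothesis.

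The boundary $\partial F$ has two components. On the $L(n,1)$ side the map is the identity, which is trivially a simple homotopy equivalence and therefore in particular a simple $\Z[\Z/n]$-homology equivalence. On the $S_n^3(K)$ side the map is $f$ from Lemma~\ref{lem:deg-one-map}, which is a degree one $\Z$-homology equivalence inducing the canonical surjection $\pi_1(S_n^3(K))\twoheadrightarrow\pi_1(L(n,1))=\Z/n$ sending the meridian of $K$ to the standard generator. Under our standing hypothesis $H_1(S_n^3(K);\Z[\Z/n])=0$, Poincar\'e duality with twisted coefficients on the closed, oriented $3$-manifold $S_n^3(K)$ forces $H_*(S_n^3(K);\Z[\Z/n])\cong H_*(S^3;\Z)\cong H_*(L(n,1);\Z[\Z/n])$, and the degree one assumption then implies that $f$ realises these isomorphisms, that is, $f$ is a $\Z[\Z/n]$-homology equivalence.

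To upgrade this to a \emph{simple} $\Z[\Z/n]$-homology equivalence one must show that the Whitehead torsion $\tau(f)\in\Wh(\Z/n)$ vanishes, and this is precisely the content of Lemma~\ref{lem:whitehead}, whose only hypothesis is $H_1(S_n^3(K);\Z[\Z/n])=0$. With both components of $\partial F$ verified to be simple $\Z[\Z/n]$-homology equivalences, the topological surgery machinery produces the desired obstruction $\sigma(W,F)\in L_4^s(\Z[\Z/n])$. The main technical subtlety one might anticipate here---ensuring that the obstruction indeed lives in the simple $L$-group $L_4^s$ rather than in some $K_1$-decorated variant---is exactly what the torsion computation in Lemma~\ref{lem:whitehead} takes care of; once that lemma and Lemma~\ref{lem:deg-one-map} are in hand, the content of this proposition is a direct application of standard theory.
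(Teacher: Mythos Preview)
Your proposal is correct and follows essentially the same approach as the paper: identify the surgery kernel, verify that the boundary map $f\sqcup\Id$ is a simple $\Z[\Z/n]$-homology equivalence (trivial on the $L(n,1)$ component, and on $S^3_n(K)$ using the hypothesis $H_1(S_n^3(K);\Z[\Z/n])=0$ together with Lemma~\ref{lem:whitehead} for the torsion), and conclude that the obstruction lands in $L_4^s(\Z[\Z/n])$. One small remark: the goodness of $\Z/n$ is not needed to \emph{define} the surgery obstruction---it is only used later (in Proposition~\ref{prop:homology-cobordism}) to actually realise the surgeries once the obstruction is shown to vanish---so that clause can be safely omitted here.
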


\begin{proof}
Since $H_2(L(n,1) \times I;\Z[\Z/n]) =0$, the surgery kernel module \cite[\textsection 5]{Wall-Ranicki:1999-1} is equal to $H_2(W;\Z[\Z/n])$ and the middle-dimensional $\Z[\Z/n]$-coefficient intersection pairing \[\lambda \colon H_2(W;\Z[\Z/n]) \times H_2(W;\Z[\Z/n]) \to \Z[\Z/n].\]
is equal to the surgery kernel pairing. We assumed that $H_1(S_n^3(K);\Z[\Z/n]) = 0$, so this pairing is nonsingular. This intersection form, together with the Wall self-intersection form $\mu$, determine a quadratic form $(H_2(W;\Z[\Z/n]),\lambda,\mu)$ over $\Z[\Z/n]$.

We claim this quadratic form is moreover \emph{simple} (with respect to some stable $\Z[\Z/n]$ basis). The manifold with boundary $(W,S_n^3(K)\sqcup-L(n,1))$ is a simple Poincar\'{e} pair \cite[Essay III, Theorem 5.13]{Kirby-Siebenmann:1977-1}, so it remains to check that the $\Z[\Z/n]$-homology equivalence on the boundary
\[
f\sqcup \id\colon S^3_n(K)\sqcup -L(n,1)\to L(n,1)\sqcup -L(n,1)
\]
has vanishing Whitehead torsion. But the identity map has vanishing Whitehead torsion, and so does $f$ by Lemma \ref{lem:whitehead}.
\end{proof}

\section{The surgery obstruction}\label{sec:homologycob}

For all $n$, the surgery obstruction group $L_4^s(\Z[\Z/n])$ has been computed and is given by a collection of signatures known as the \emph{multisignature}. In this section we describe the multisignature and then relate it to Tristram-Levine signatures of a knot in the case of interest to us.

\subsection{The multisignature for a general finite group}

We first recall some representation theory. Let $\pi$ be any finite group and let $\C\pi$ be the complex group ring. The elements of the \emph{representation ring} $R_\C\pi$  are formal additive differences of finitely generated $\C\pi$-modules. The product structure is given by tensor product; we note the formal addition also agrees with the direct sum of modules.

Given a finitely generated $\C\pi$-module $V$, we obtain the underlying complex vector space, denoted $V_\C$, by forgetting the $\pi$ action. For example, the rank $1$ free module~$V = \C\pi$ has underlying $|\pi|$-dimensional complex vector space, which has a natural basis given by the group elements of~$\pi$. We are emphasising the distinction between $V$ and $V_\C$ to avoid confusion between rank and dimension, and to increase clarity in later proofs. A $\C\pi$-module $V$ determines a $\C\pi$-module structure on the complex conjugate vector space $\overline{V_\C}$, and this is called the \emph{complex conjugate representation}. Those $\C\pi$-modules that are isomorphic to their own complex conjugate representation form a subring $R_\C^+\pi\subset R_\C\pi$ called the \emph{purely real representation ring}. Given a $\C\pi$-module $V$, the \emph{character} is $\chi_V\colon \pi\to \C$, where $\chi_V(g)$ is the trace of the endomorphism of $V$ given by $g$.

Now let $\lambda\colon V\times V\to \C\pi$ be a hermitian form on a finitely generated $\C\pi$-module $V$. The form $(V,\lambda)$ determines a $\pi$-equivariant hermitian form $(V_\C,\lambda_\C)$ over $\C$, where

\[
\lambda_\C\colon V_\C\times V_\C\xrightarrow{\lambda}\C\pi\xrightarrow{\text{trace}}\C
\]
and where ``trace'' denotes taking the coefficient of the neutral element $e\in\pi$. We may take $V^+$ and $V^-$, the maximal positive definite and negative definite subspaces with respect to $\lambda_\C$. These subspaces are moreover $\pi$-invariant and hence are themselves $\C\pi$-modules. We define the \emph{representation-valued multisignature}:
\[
\text{mult}(V,\lambda)= V^+-V^-\in R_\C\pi.
\]
Given a nonsingular hermitian form $(V,\lambda)$ representing an element of  $L_4^s(\Z[\pi])$, we may complexify to obtain a hermitian form over $\C\pi$, and then take the representation-valued multisignature. This determines a group homomorphism
\[
\text{mult}\colon L_4^s(\Z[\pi])\to R_\C\pi.
\]

Taking characters determines an injective ring homomorphism $R_\C\pi\to \Hom^{\text{class}}(\pi,\C)$, where the latter denotes the ring of $\Z$-linear combinations of complex-valued functions on $\pi$ that are constant on conjugacy classes of $\pi$. We call the image of $\text{mult}(V,\lambda)$ under this homomorphism the \emph{character-valued multisignature}.

\subsection{The multisignature for a finite cyclic group}
We now restrict our attention to $\pi=\Z/n$ and choose a generator $t$, obtaining an isomorphism $\C[\Z/n]\cong \C[t]/(t^{n}-1)$. Let $\chi$ represent the character of the irreducible $\C[t]/(t^n-1)$-module $\C$, where $t$ acts by $\exp(2\pi i/n)$. The characters for the irreducible $\C[\Z/n]$-modules are then $\chi^0, \chi^1, \chi^2, \dots,\chi^{n-1}$, and the ring of complex class functions for $\Z/n$ is well-known to be isomorphic to $\Z[\chi]/(1+\chi+\chi^2+\dots+\chi^{n-1})$.

\begin{proposition}\label{prop:multisignature} Let $(V,\lambda)$ be a hermitian form over $\C[\Z/n]$, with associated $\pi$-equivariant complex hermitian form $(V_\C,\lambda_\C)$. The coefficient $\alpha_k$ of the character-valued multisignature
\[
\mult(V,\lambda)=\alpha_0\cdot \chi^0+\alpha_1\cdot \chi^1+\dots+\alpha_{n-1}\cdot\chi^{n-1}\in \Z[\chi]/(1+\chi+\chi^2+\dots+\chi^{n-1})
\]
is equal to the ordinary signature of the restriction of $\lambda_\C$ to the $\exp(2\pi ik/n)$-eigenspace of the action of $t$ on $V_\C$.
\end{proposition}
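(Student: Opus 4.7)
The plan is to use the eigenspace decomposition of $V_\C$ under the action of the generator $t$ and show that the character-valued multisignature is diagonal in this decomposition. The proposition then reduces to a direct comparison of dimensions.

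First I would verify that the associated complex hermitian form $\lambda_\C$ is $t$-invariant. Since $\Z/n$ is abelian and $\lambda$ is sesquilinear over $\C[\Z/n]$, one computes that $\lambda(tx,ty)=t\lambda(x,y)t^{-1}=\lambda(x,y)$, and taking traces gives $\lambda_\C(tx,ty)=\lambda_\C(x,y)$. Next, since $t$ acts on $V_\C$ as a $\C$-linear automorphism of finite order $n$, it is diagonalisable with eigenvalues among the $n$-th roots of unity, yielding a decomposition $V_\C=\bigoplus_{k=0}^{n-1}V_k$, where $V_k$ is the $\exp(2\pi i k/n)$-eigenspace.

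The key calculation is that this decomposition is orthogonal with respect to $\lambda_\C$. For $x\in V_k$ and $y\in V_\ell$, expanding $\lambda_\C(tx,ty)$ in two ways gives
\[
\lambda_\C(x,y) \;=\; \lambda_\C(tx,ty) \;=\; \exp(2\pi i (k-\ell)/n)\,\lambda_\C(x,y),
\]
so $\lambda_\C(x,y)=0$ whenever $k\neq \ell$. Hence $\lambda_\C$ restricts to a hermitian form on each $V_k$, whose ordinary signature I will denote $\sigma_k$.

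Now I would choose maximal positive and negative definite subspaces $V^\pm_k\subset V_k$ with respect to $\lambda_\C|_{V_k}$, and set $V^\pm:=\bigoplus_k V^\pm_k$. By the orthogonality in the previous step these are maximal positive and negative definite subspaces of $V_\C$ for $\lambda_\C$, and they are moreover $t$-invariant $\C[\Z/n]$-submodules; thus they qualify as representatives for $V^\pm$ in the definition of $\mult(V,\lambda)$. Since $V^\pm_k$ consists entirely of vectors on which $t$ acts by $\exp(2\pi i k/n)$, it is a direct sum of $\dim_\C V^\pm_k$ copies of the irreducible representation with character $\chi^k$. Therefore the character of $V^+-V^-$ reads
\[
\chi_{V^+}-\chi_{V^-}=\sum_{k=0}^{n-1}\bigl(\dim_\C V^+_k-\dim_\C V^-_k\bigr)\chi^k=\sum_{k=0}^{n-1}\sigma_k\,\chi^k,
\]
which identifies $\alpha_k$ with the signature of $\lambda_\C|_{V_k}$, as claimed. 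There is no genuine obstacle; the only delicate point is bookkeeping the interaction between the $\C$-scalar action and the $t$-action when computing $\lambda_\C(tx,ty)$ in two different ways, and this is where commutativity of $\Z/n$ enters.
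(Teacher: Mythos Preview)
Your proposal is correct and follows essentially the same approach as the paper. The only difference is the order of operations: the paper starts from the maximal positive and negative definite $\C[\Z/n]$-submodules $V^\pm$ and then decomposes each into isotypic components $V_k^\pm$, whereas you first decompose $V_\C$ into eigenspaces $V_k$ (verifying orthogonality explicitly) and then pick maximal definite subspaces inside each $V_k$; both routes arrive at $\alpha_k=\dim_\C V_k^+-\dim_\C V_k^-$ and the identification of $V_k$ with the $\exp(2\pi ik/n)$-eigenspace.
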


\begin{proof}
Recall the positive and negative definite $\C[\Z/n]$-modules $V^+$ and $V^-$. There is then a decomposition into irreducible components $V^\pm\cong\bigoplus_{k=0}^{n-1}V^\pm_k$, where $V^\pm_k$ denotes the component that corresponds to the character $\chi^k$. The representation-valued multisignature then decomposes as
\[
\text{mult}(V,\lambda)=V^+-V^-=\sum_{k=0}^{n-1}(V^+_k-V^-_k)\in R_\C(\Z/n)
\]
Taking characters, we see that $\alpha_k=\dim_\C(V_k^+)-\dim_\C(V_k^-)$. In other words, $\alpha_k$ is the ordinary signature of the restriction of $\lambda_\C$ to $V_k=V_k^+\oplus V_k^-$. Note that, viewing $V_k$ as a complex vector subspace $V_k\subset V_\C$, it is the $\exp(2\pi i k/n)$-eigenspace of the action of the generator $t\in\Z/n$ on $V_\C$.
\end{proof}

\begin{example}\label{ex:plugitin} Suppose $(V,\lambda)$ as in Proposition \ref{prop:multisignature} and $A(t)$ is a hermitian matrix over $\C[\Z/n]\cong\C[t]/(t^n-1)$ representing $(V,\lambda)$. Then the signature of $\lambda_\C$ restricted to $V_k$ is given by the signature of the complex hermitian matrix $A(\exp(2\pi ik/n))$.
\end{example}

\begin{example}\label{ex:backtoreality}
Let $W$ be a compact oriented $4$-manifold with a homomorphism $\phi\colon\pi_1(W)\to\Z/n$. This determines an $n$-fold cyclic cover $\wt{W}\to W$ with covering transformation $\tau\colon \wt{W}\to\wt{W}$. The complex vector space $V_\C:=H_2(\wt{W};\C)$ has an action of $\Z/n$ generated by $\tau_*=t$, and the middle-dimensional intersection pairing $\lambda_\C$ of $\wt{W}$ is equivariant with respect to this. Thus $(W,\phi)$ determines a multisignature via $(V_\C,\lambda_\C)$. The coefficients of this multisignature may be computed using Proposition \ref{prop:multisignature} and Example \ref{ex:plugitin}.
\end{example}

For odd $n$, the next result is due to Wall \cite[Theorem 13A.4(ii)]{Wall-Ranicki:1999-1} and independently to Bak \cite{MR507109}. For $n$ a power of 2, it was explicitly derived in \cite[Corollary 3.3.3]{MR432737} and the techniques for the general even $n$ case were outlined. This general even $n$ case is stated in \cite[p.~3]{MR1747537} and implicitly calculated there.

\begin{theorem}[{\cite[\textsection 10, \textsection 12]{MR1747537}}] \label{thm:multisignature}For $n$ any positive integer, the group homomorphism
\[
\mult \colon L_4^s(\Z[\Z/n])\to \Z[\chi]/(1+\chi+\chi^2+\dots+\chi^{n-1})
\]
determined by the character-valued multisignature factors through the purely real representations $R^+_\C(\Z/n)\subset R_\C(\Z/n)$. There is moreover an isomorphism
  \[
  L_4^s(\Z[\Z/n]) \cong \left\{\begin{array}{ll} 4\Z^{(n-1)/2}\oplus8\Z&\text{$n$ odd,}\\  4\Z^{(n-2)/2}\oplus8\Z\oplus 8\Z&\text{$n$ even,}\end{array}\right.
  \]
given by the function
  \[
  (V,\lambda)\mapsto \left\{\begin{array}{ll} (\alpha_1,\alpha_2\dots,\alpha_{(n-1)/2},\alpha_0)&\text{$n$ odd,}\\  (\alpha_1,\alpha_2\dots,\alpha_{(n-2)/2}, \alpha_{n/2},\alpha_0)&\text{$n$ even.}\end{array}\right.
  \]
where $\alpha_k$ is the coefficient of $\chi^k$ in $\mult(V,\lambda)$.
\end{theorem}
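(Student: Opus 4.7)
The plan is to follow the template of Wall's calculation of $L_4^s$ for finite cyclic groups in \cite[Chapter 13A]{Wall-Ranicki:1999-1}, together with the refinements for even $n$ in the references cited. To see first that the multisignature factors through $R^+_{\C}(\Z/n)$, I would exploit hermitian symmetry: the involution on $\C[\Z/n]$ underlying any hermitian form sends $t \mapsto t^{-1}$, so under the eigenspace decomposition of $V_\C$ the form $\lambda_\C$ pairs $V_k$ with $V_{n-k}$ nondegenerately, and restricts to an honest complex hermitian form on $V_0$ and (when $n$ is even) on $V_{n/2}$. Consequently the positive and negative definite pieces satisfy $V^{\pm}_k \cong \overline{V^{\pm}_{n-k}}$, giving $\alpha_k = \alpha_{n-k}$, so $\mult(V,\lambda)$ is purely real, as claimed.

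To compute the group itself, I would decompose the rational group ring
\[
\Q[\Z/n] \cong \prod_{d \mid n} \Q(\zeta_d)
\]
and apply additivity of $L$-theory together with a transfer to the maximal order in $\Z[\Z/n]$. Each cyclotomic factor $\Q(\zeta_d)$ carries the involution $\zeta_d \mapsto \zeta_d^{-1}$. When $d=1$ (and when $d=2$ for even $n$) the involution is trivial, so the contribution is the ordinary simple symmetric $L$-group $L_4^s(\Z) \cong 8\Z$, detected by $\alpha_0$ (respectively $\alpha_{n/2}$); the factor of $8$ is the classical divisibility of the signature for even unimodular forms. For $d \geq 3$ the involution is nontrivial, and the relevant $L$-group of $\Z[\zeta_d]$ contributes $4\Z$ for each Galois orbit $\{k, n-k\}$, detected by $\alpha_k$; the factor of $4$ is the analogous evenness-divisibility for hermitian forms over the ring of integers in $\Q(\zeta_d)$. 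Summing these pieces recovers the group structure stated. Surjectivity is then established by explicit realisation: the $E_8$ form over $\Z$ realises the generator of the $\alpha_0$ summand (and a suitable twist by the sign representation realises $\alpha_{n/2}$ when $n$ is even), while for each remaining orbit $\{k, n-k\}$ an even hermitian plumbing over the totally real subring of $\Z[\zeta_n]$ realises the corresponding basis vector with all other coordinates zero.

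The main obstacle will be controlling the simple structure. One must verify that the Rothenberg sequence relating $L_4^s$ to $L_4^h$ does not obstruct the calculation, which requires a computation of $\Wh(\Z/n)$ and a check that the Witt classes realising the putative generators admit representatives by based hermitian forms of trivial torsion. This is where the even-$n$ case is technically more demanding than the odd-$n$ case treated in \cite{Wall-Ranicki:1999-1}, and constitutes the heart of the additional work required in the references cited for even $n$. Once the Rothenberg obstruction is dispatched, matching the surjection from the surgery obstruction with the explicit basis given by the $\alpha_k$'s is a bookkeeping exercise combining the character computation of Proposition~\ref{prop:multisignature} with the $L$-theoretic additivity above.
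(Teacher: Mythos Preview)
The paper does not prove this theorem; it is quoted as a result from the literature (Wall and Bak for odd $n$, \cite{MR432737} and \cite{MR1747537} for even $n$), and no argument is given beyond the citation. Your sketch is therefore not being compared against a proof in the paper but against the cited sources, and in broad outline it follows exactly the strategy of Wall's Chapter~13A: decompose $\Q[\Z/n]$ into cyclotomic factors, compute the $L$-theory of each factor, and then deal with the passage between decorations via the Rothenberg sequence. You have also correctly identified that the even-$n$ case requires the additional work of the later references.

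There is, however, a genuine error in your argument that the multisignature lands in $R^+_\C(\Z/n)$. You claim that $\lambda_\C$ pairs $V_k$ with $V_{n-k}$. This is false: $\lambda_\C$ is a $\Z/n$-equivariant \emph{hermitian} (not bilinear) form, so for $v\in V_k$ and $w\in V_j$ one has $\lambda_\C(v,w)=\lambda_\C(tv,tw)=\xi_n^{k}\overline{\xi_n^{j}}\,\lambda_\C(v,w)=\xi_n^{k-j}\lambda_\C(v,w)$, forcing $\lambda_\C(v,w)=0$ unless $k=j$. Thus the eigenspace decomposition is $\lambda_\C$-orthogonal and each $\alpha_k$ is the signature of $\lambda_\C|_{V_k}$, as in Proposition~\ref{prop:multisignature}. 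The correct reason for $\alpha_k=\alpha_{n-k}$ is instead that the form originates over $\Z[\Z/n]$, hence over $\R[\Z/n]$: complex conjugation on $V_\C$ (with respect to this real structure) interchanges $V_k$ and $V_{n-k}$ and preserves $\lambda_\C$, so the restricted signatures agree. Your conclusion is right, but the mechanism you describe would apply to a bilinear form, not a sesquilinear one.
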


\begin{remark}
The reader may be wondering why only half the character coefficients of $\mult(V,\lambda)$ appear in the above isomorphism. It is a consequence of the fact that the character-valued multisignature factors through $\R_\C^+(\Z/n)$, that the complex irreducible representations will appear as conjugate pairs in $\mult(V,\lambda)$:
\[
\arraycolsep=4pt\def\arraystretch{2}
\text{mult}(V,\lambda)=\left\{\begin{array}{ll} \alpha_0\cdot\chi^0 + \sum_{k=0}^{(n-1)/2}\,\alpha_k(\chi^k+\chi^{-k})&\text{$n$ odd,}\\ \alpha_0\cdot\chi^0 + \alpha_{n/2}\cdot\chi^{n/2}+ \sum_{k=0}^{(n-2)/2}\,\alpha_k(\chi^k+\chi^{-k})&\text{$n$ even.}\end{array}\right.
\]
In other words, for all $k$ we have that $\alpha_k=\alpha_{-k}$.
\end{remark}

\subsection{The multisignature as Tristram-Levine signatures}

For this section, let $K$ be a knot and let $W$ be a bordism from $S_n^3(K)$ to $L(n,1)$ over the group $\Z/n$. Such a bordism in particular has a multisignature as in Example \ref{ex:backtoreality}. Now we relate the Tristram-Levine signatures of $K$ to the multisignature of $W$.


\begin{lemma}\label{lemma:multi-signatures}
Let $K$ be a knot and let $W$ be a cobordism from $S_n^3(K)$ to $L(n,1)$ over the group $\Z/n$. Assume that $\sigma(W)=0$. When $0<k<n$, the coefficient $\alpha_k\in\Z$ of the multisignature of $W$ at $\chi^k$ coincides with the Tristram-Levine signature of $K$ at $\xi_n^k$, where $\xi_n=\exp(2\pi i /n)$.
\end{lemma}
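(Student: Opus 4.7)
The plan is to combine two results already established in the paper: Proposition~\ref{prop:TlsigsandCGsigs}, which relates Tristram--Levine signatures of $K$ to eigenspace signatures of the $\Z/n$-cover $\widetilde{W}$ of the cobordism, and Proposition~\ref{prop:multisignature} together with Example~\ref{ex:backtoreality}, which identify the character coefficients $\alpha_k$ of $\mult(W,\phi)$ with exactly those eigenspace signatures.

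More concretely, the map $\phi\colon\pi_1(W)\to\Z/n$ making $W$ a cobordism over $\Z/n$ gives an $n$-fold cyclic cover $\widetilde{W}\to W$ with deck transformation $\tau$. As in Example~\ref{ex:backtoreality}, the complex vector space $V_\C:=H_2(\widetilde{W};\C)$, together with its $\Z/n$-action generated by $\tau_*$ and the equivariant intersection pairing $\lambda_\C$, determines an element of $R_\C(\Z/n)$ whose character is the character-valued multisignature of $W$. By Proposition~\ref{prop:multisignature}, the coefficient $\alpha_k$ in
\[
\mult(W,\phi)=\sum_{j=0}^{n-1}\alpha_j\chi^j
\]
is the ordinary signature of $\lambda_\C$ restricted to the $\exp(2\pi ik/n)$-eigenspace of $\tau_*$ acting on $V_\C$. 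This is exactly the quantity denoted $\sigma_k(\widetilde{W})$ in Proposition~\ref{prop:TlsigsandCGsigs}.

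Finally, invoking Proposition~\ref{prop:TlsigsandCGsigs} gives, for $0<k<n$,
\[
\sigma_{\xi_n^k}(K)=\sigma_k(\widetilde{W})-\sigma(W)=\alpha_k-\sigma(W),
\]
and the assumption $\sigma(W)=0$ yields $\alpha_k=\sigma_{\xi_n^k}(K)$, as desired. There is no genuine obstacle here: the heavy lifting (the Casson--Gordon/$G$-signature computation identifying Tristram--Levine signatures with eigenspace signatures of $\widetilde{W}$, and the character computation of $\mult$) has already been carried out, so this lemma is essentially a dictionary-level consolidation of those two earlier results under the hypothesis $\sigma(W)=0$.
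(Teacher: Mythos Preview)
Your proof is correct and follows essentially the same approach as the paper: invoke Proposition~\ref{prop:TlsigsandCGsigs} to obtain $\sigma_{\xi_n^k}(K)=\sigma_k(\widetilde{W})-\sigma(W)=\sigma_k(\widetilde{W})$, and then use Proposition~\ref{prop:multisignature} (via Example~\ref{ex:backtoreality}) to identify $\sigma_k(\widetilde{W})$ with the coefficient $\alpha_k$.
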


\begin{proof}
Noting that by hypothesis $\sigma(W)=0$, we have  by Proposition~\ref{prop:TlsigsandCGsigs} that
\[\sigma_{\xi_n^k}(K)= \sigma_k(\widetilde{W})- \sigma(W) = \sigma_k(\widetilde{W}).\]
Proposition~\ref{prop:multisignature} tells us that $\sigma_k(\widetilde{W})$ is the coefficient of the character $\chi^k$ in the multisignature as claimed.
\end{proof}

We summarise the results in the previous three sections for our purposes.

\begin{prop}\label{prop:summary}Let $K$ be a knot and let $f\colon S_n^3(K)\to L(n,1)$ be a degree one normal map. Suppose $W$ is a degree one normal bordism from $(S_n^3(K),f)$ to $(L(n,1),\Id)$ with $\sigma(W)=0$. Then the associated surgery obstruction
\[
(H_2(W;\Z[\Z/n]),\lambda,\mu)\in L_4^s(\Z[\Z/n])
\]
is trivial if and only
\[
\sigma_{\xi_n^k}(K)=0\text{ for every }\left\{\begin{array}{ll}k=1,\dots, (n-1)/2 &\text{$n$ odd,}\\ k=1,\dots, n/2& \text{$n$ even,}\end{array}\right.
\]
where $\sigma_{\xi_n^k}(K)$ denotes the Tristram-Levine signature of $K$ at $\xi_n^k$, for $\xi_n=\exp(2\pi i /n)$.
\end{prop}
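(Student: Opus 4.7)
The plan is to chain together Proposition~\ref{prop:alltogether-2}, Theorem~\ref{thm:multisignature}, and Lemma~\ref{lemma:multi-signatures}. After invoking Proposition~\ref{prop:alltogether-2} to identify the surgery obstruction with the class of the intersection form $(H_2(W;\Z[\Z/n]),\lambda,\mu)$, Theorem~\ref{thm:multisignature} translates triviality of this class in $L_4^s(\Z[\Z/n])$ to the simultaneous vanishing of its multisignature coefficients $\alpha_0,\alpha_1,\ldots,\alpha_{(n-1)/2}$ (for $n$ odd), respectively $\alpha_0,\alpha_1,\ldots,\alpha_{(n-2)/2},\alpha_{n/2}$ (for $n$ even).

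For each $k$ with $0<k\leq \lfloor n/2\rfloor$ (including $k=n/2$ when $n$ is even), Lemma~\ref{lemma:multi-signatures}---applicable because $\sigma(W)=0$---identifies $\alpha_k$ with $\sigma_{\xi_n^k}(K)$. This reduces the equivalence we must prove to the claim that the remaining coefficient $\alpha_0$ vanishes automatically.

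To dispatch $\alpha_0$, I would apply Proposition~\ref{prop:multisignature} to the complexified form on $H_2(\wt{W};\C)$, so that $\alpha_0$ is the ordinary signature of the intersection form restricted to the $+1$-eigenspace of the deck transformation action. The standard transfer/pullback identification provides an isomorphism $H_2(W;\C)\xrightarrow{\cong}H_2(\wt{W};\C)^{\Z/n}$ under which the intersection pairing on the invariant subspace agrees, up to the positive scalar $n$, with the intersection pairing on $W$. Signature is unchanged by scaling by a positive real, so $\alpha_0=\sigma(W)$, which is zero by hypothesis. Combined with the previous paragraph this gives the stated equivalence.

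The main obstacle, modest as it is, lies in the $\alpha_0=\sigma(W)$ identification: everything else is a direct bookkeeping of results already in place. One small caveat is to use the $\alpha_k=\alpha_{-k}$ symmetry (noted after Theorem~\ref{thm:multisignature}) when reading off the indexing in the surgery obstruction isomorphism, so that the single range $k=1,\ldots,\lfloor n/2\rfloor$ in the statement suffices.
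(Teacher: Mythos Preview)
Your proposal is correct and follows essentially the same route as the paper's proof: invoke Theorem~\ref{thm:multisignature} to reduce to vanishing of the multisignature coefficients, identify $\alpha_k$ for $0<k\le\lfloor n/2\rfloor$ with the Tristram--Levine signatures via Lemma~\ref{lemma:multi-signatures}, and argue separately that $\alpha_0=\sigma(W)=0$. The paper disposes of $\alpha_0$ in a single sentence (``the signature of the $\Z[\Z/n]$-coefficient intersection form restricted to the trivial eigenspace is just the ordinary signature of $W$''), whereas you spell out the transfer identification $H_2(W;\C)\cong H_2(\wt W;\C)^{\Z/n}$ explicitly; this is the same idea with more detail. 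One small redundancy: the appeal to Proposition~\ref{prop:alltogether-2} is unnecessary here, since the statement already hands you the surgery obstruction as $(H_2(W;\Z[\Z/n]),\lambda,\mu)$, and that proposition carries an extra hypothesis ($H_1(S_n^3(K);\Z[\Z/n])=0$) not assumed in the present statement.
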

\begin{proof}
By Theorem \ref{thm:multisignature}, a class in $L_4^s(\Z[\Z/n])$ vanishes if and only if there is vanishing of the associated multisignature coefficients:
\[
\begin{array}{ll} (\alpha_1,\alpha_2\dots,\alpha_{(n-1)/2},\alpha_0)&\text{for $n$ odd,}\\  (\alpha_1,\alpha_2\dots,\alpha_{(n-2)/2},\alpha_{n/2},\alpha_0)&\text{for $n$ even.}\end{array}
  \]

The coefficient $\alpha_0$ corresponds to the trivial character and thus corresponds to the eigenspace of the $t$ action on $(H_2(W;\Z[\Z/n])$ with eigenvalue 1. The signature of the $\Z[\Z/n]$-coefficient intersection form of $W$ restricted to this component is then just the ordinary signature of $W$, whose vanishing is in the hypotheses.

For $k\neq 0$, the coefficient $\alpha_k$ is equal to $\sigma_{\xi_n^k}(K)$, by Lemma \ref{lemma:multi-signatures}.
\end{proof}

\begin{proposition}\label{prop:homology-cobordism}
For odd $n$, suppose that $H_1(S_n^3(K);\Z[\Z/n])=\{0\}$ and $\sigma_{\xi_n^k}(K) = 0$ for all $0<k<n$, and $\xi_n$ a primitive $n$th root of unity. Then $S_n^3(K)$ is homology cobordant to $L(n,1)$ via a cobordism $V$ homotopy equivalent to $L(n,1) \times I$, via a homotopy equivalence restricting to the identity map on $L(n,1)$ and the standard degree one collapse map on $S^3_n(K)$. The same is true for even $n$ when $\Arf(K)=0$.
\end{proposition}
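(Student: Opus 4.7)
The plan is to assemble Propositions~\ref{prop:alltogether}, \ref{prop:alltogether-2}, and~\ref{prop:summary} together with the topological surgery machinery. First, I would invoke Proposition~\ref{prop:alltogether} to produce a degree one normal bordism $F\colon W\to L(n,1)\times I$ from $(S_n^3(K),f)$ to $(L(n,1),\Id)$, where $f$ is the standard map of Lemma~\ref{lem:deg-one-map}, satisfying $\pi_1(W)\cong\Z/n$ and $\sigma(W)=0$. For odd $n$ this step requires no hypothesis on $K$, while for even $n$ it uses precisely the assumption $\Arf(K)=0$.

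Next, using $H_1(S_n^3(K);\Z[\Z/n])=0$, Proposition~\ref{prop:alltogether-2} associates to $(W,F)$ a simple surgery obstruction in $L_4^s(\Z[\Z/n])$ represented by the nonsingular quadratic form $(H_2(W;\Z[\Z/n]),\lambda,\mu)$. Proposition~\ref{prop:summary} then identifies the vanishing of this class with the simultaneous vanishing of $\sigma(W)$ and of the Tristram-Levine signatures $\sigma_{\xi_n^k}(K)$ over the appropriate range of $k$. Both conditions hold: $\sigma(W)=0$ by construction and the signature conditions hold by hypothesis. Therefore the simple surgery obstruction is trivial.

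I would then apply the topological surgery exact sequence to perform finitely many $2$-surgeries in the interior of $W$, rel boundary, producing a cobordism $V$ together with a simple homotopy equivalence $F'\colon V\to L(n,1)\times I$ that restricts to $f$ on $S_n^3(K)$ and to $\Id$ on $L(n,1)$. This is where the requirement of a \emph{simple} rather than merely ordinary surgery obstruction matters, and this is why Lemma~\ref{lem:whitehead} was used to establish $\tau(f)=1\in\Wh(\Z/n)$ and hence ensure the obstruction lives in $L_4^s$ rather than $L_4^h$. The main technical subtlety of the whole argument is concentrated here.

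Finally, to extract an integral homology cobordism, I would observe that $L(n,1)\hookrightarrow V\simeq L(n,1)\times I$ is visibly a $\Z$-homology equivalence, and on the other boundary component the composition $S_n^3(K)\hookrightarrow V\xrightarrow{F'}L(n,1)$ coincides with $f$, which is itself a $\Z$-homology equivalence since $H_*(S_n^3(K);\Z)\cong H_*(L(n,1);\Z)$ and $f$ is degree one inducing the obvious isomorphism on $H_1$. Hence $V$ is the desired homology cobordism, and the simple homotopy equivalence $F'$ provides the boundary-compatible identification with $L(n,1)\times I$ required by the statement.
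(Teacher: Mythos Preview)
Your proposal is correct and follows essentially the same route as the paper's proof: invoke Proposition~\ref{prop:alltogether} to build $W$, use Propositions~\ref{prop:alltogether-2} and~\ref{prop:summary} to see the surgery obstruction vanishes, then do surgery rel boundary to obtain $V\simeq L(n,1)\times I$. The one point you leave implicit, which the paper makes explicit, is that performing these $2$-surgeries in the topological category in dimension $4$ requires that $\Z/n$ be a \emph{good} group in the sense of Freedman--Quinn~\cite[Chapter~11]{FQ}; you should state this, since it is precisely what licenses the final surgery step.
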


In fact, Propositions~\ref{prop:signaturesobstruct} and~\ref{prop:trivialh1} show that this is an `if and only if'.

\begin{proof}
Lemma \ref{lem:deg-one-map} shows how to construct the degree one normal collapse map $f\colon S_n^3(K)\to L(n,1)$. In Proposition \ref{prop:alltogether}, with the assumption that $\Arf(K)=0$ when $n$ even, we further constructed a degree one normal bordism $W$ over $L(n,1)$ from $(S_n^3(K),f)$ to $(L(n,1),\Id)$, such that $\pi_1(W)\cong \Z/n$, and $\sigma(W)=0$.

By the discussion at the end of Section \ref{sec:surgeryproblem}, using the fact that $H_1(S_n^3(K);\Z[\Z/n])=\{0\}$, the obstruction to performing further surgeries on $W$ to improve it to be homotopy equivalent to $L(n,1)$ lies in the group $L_4^s(\Z[\Z/n])$.

The vanishing of the surgery obstruction from Proposition \ref{prop:summary} and the fact that $\Z/n$ is a good group implies that we can perform surgery on the interior of $W$ to obtain a cobordism $V$ that is homotopy equivalent to $L(n,1)$~\cite[Chapter~11]{FQ}. In particular, $V$ is a $\Z[\Z/n]$-homology cobordism from $S_n^3(K)$ to $L(n,1)$.
\end{proof}

\section{The Arf and \texorpdfstring{$\tau$}{tau} invariants }\label{sec:fake-trace}

\subsection{The \texorpdfstring{$\tau$}{tau} invariant}
Recall that a locally flat sphere in a topological $4$-manifold $M$ is said to be \emph{generically immersed} if all its self-intersections are transverse double points. A locally flat union of discs in $M$ is said to be \emph{generically immersed} if its self-intersections are transverse double points and if the boundaries are mutually disjoint and embedded.

We refer the reader to~\cite[Chapter~1]{FQ} for standard notions such as the intersection and self-intersection numbers $\lambda$ and $\mu$ for generically immersed spheres in an ambient  topological $4$-manifold. We will use the following lemma.

\begin{lemma}[{\cite[p.~22]{FQ}}]\label{lem:lambda-mu-e}
For a generically immersed sphere~$S$ in a topological $4$-manifold, we have that
\[
\lambda(S,S)=\mu(S)+\overline{\mu(S)}+e(\nu S)
\]
where the last term denotes the euler number of the normal bundle $\nu S$ of $S$.
\end{lemma}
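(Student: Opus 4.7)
The plan is to prove this by computing $\lambda(S,S)$ via a parallel copy of $S$ obtained from a generic section of the normal bundle, and then identifying the three sources of intersection points.

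First I would choose a generic section $s$ of the normal bundle $\nu S \to S$ and let $S' \subset M$ be the corresponding push-off of $S$. Since $S'$ is homotopic to $S$, by the homotopy invariance of $\lambda$ we have $\lambda(S,S) = \lambda(S,S')$. After a small perturbation we may assume $S'$ is transverse to $S$ away from the double points of $S$, and that near each self-intersection of $S$ the two sheets are pushed off independently. The task then becomes: enumerate the points of $S \cap S'$ with their signs and fundamental-group decorations.

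Next, I would separate the intersections $S \cap S'$ into two kinds. Along the embedded part of $S$, the intersections $S \cap S'$ are exactly the zeros of the section $s$ (counted with sign), so their algebraic contribution to $\lambda(S,S')$ is precisely $e(\nu S) \cdot 1 \in \Z[\pi_1(M)]$. Near a double point $p$ of $S$ at which sheets $A$ and $B$ meet, the push-off creates sheets $A'$ and $B'$, and $A' \cap B'$ is empty for small enough push-off while $A \cap B'$ and $A' \cap B$ each contain one transverse point. The point from $A \cap B'$ contributes, with the usual whisker conventions used to define $\mu$, the same sign and group element $\pm g_p \in \Z[\pi_1(M)]$ that $p$ contributes to $\mu(S)$. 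The point from $A' \cap B$ is the same double point counted with the roles of the two sheets reversed; reversing the order of the sheets corresponds precisely to the involution $g \mapsto g^{-1}$ used in the definition of the bar operation on $\Z[\pi_1(M)]$, so this point contributes $\overline{\pm g_p}$. Summing over the double points of $S$ gives $\mu(S) + \overline{\mu(S)}$.

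Adding the two contributions yields $\lambda(S,S) = \lambda(S,S') = \mu(S) + \overline{\mu(S)} + e(\nu S)$, as required. The only delicate point, and the place I would be most careful, is checking that the sheet-swap near a double point really does produce the conjugation: one must verify that the whisker for the pushed-off sheet can be chosen so that the loop representing the second intersection point is the reverse of the loop representing the first, which is a direct consequence of how $\mu$ is defined as an element of $\Z[\pi_1(M)]/\{a - \bar a\}$ in the first place (see \cite[Chapter~1]{FQ}). Everything else is a local computation in a trivialised tubular neighbourhood of $S$ and a standard $4$-ball neighbourhood of each double point.
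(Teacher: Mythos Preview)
The paper does not supply its own proof of this lemma; it simply cites \cite[p.~22]{FQ} and uses the result as a black box. Your argument is the standard one underlying that citation: push off via a generic section of $\nu S$, separate $S\cap S'$ into the zeros of the section (contributing $e(\nu S)$) and the pairs of intersections created near each double point (contributing $\mu(S)+\overline{\mu(S)}$), and invoke homotopy invariance of $\lambda$. This is correct and is exactly the computation Freedman--Quinn have in mind, so there is nothing to compare.
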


We also use the fact that a locally flat submanifold of a topological $4$-manifold has a linear normal bundle, as well as notions of topological transversality, immersions, and so on, for which we refer the reader to~\cite{FQ}.

The key tool in this section is the $\tau$ invariant of a generically immersed sphere with vanishing self-intersection number in a topological $4$-manifold. This was originally defined by~\cite{matsumoto78} and~\cite{freedman-kirby}, and then significantly generalised by Schneiderman and Teichner~\cite{schneiderman-teichner:2001}. We define $\tau$ in our setting, referring the reader to~\cite{schneiderman-teichner:2001} for a more general definition.

\begin{definition}
A generically immersed sphere $S$ in a topological $4$-manifold $M$ is said to be \emph{$s$-characteristic}, or \emph{spherically characteristic}, if $S\cdot R\equiv R\cdot R\mod{2}$ for every immersed sphere $R$ in $M$.
\end{definition}

\begin{proposition}[{\cite[Theorem~1, Remark~5]{schneiderman-teichner:2001}}]
Let $M$ be a simply connected topological $4$-manifold and suppose that $S$ is a generically immersed $s$-characteristic $2$-sphere in $M$ with $\mu(S)=0$. Consider the quantity
\[
\tau(S,\{W_i\}):=\sum_i S\cdot \mathring{W_i} \mod{2},
\]
 where $\{W_i\}$ is a set of framed, generically immersed Whitney discs in~$M$, pairing the self-intersection points of $S$, and such that each $W_i$ intersects $S$ transversely in double points in the interior of $\{\mathring{W_i}\}$.  Such a family of Whitney discs exists since $\mu(S)=0$.

The value of $\tau(S,\{W_i\})\in \Z/2$ does not depend on choices of pairing of double points, Whitney arcs, or Whitney discs.
\end{proposition}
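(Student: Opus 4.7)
The plan is to decompose the claim into invariance under three kinds of modifications—changing the Whitney discs while fixing boundary arcs, changing Whitney arcs while fixing the pairing, and changing the pairing itself—and to handle each by exhibiting the difference in $\tau$ as $\sum_i S \cdot R_i \pmod 2$ for a collection of framed immersed 2-spheres $R_i$ in $M$. The $s$-characteristic hypothesis then gives $S \cdot R_i \equiv R_i \cdot R_i \pmod 2$, and Lemma~\ref{lem:lambda-mu-e}, combined with the fact that $\pi_1(M) = 1$ (so $\mu(R_i) = \overline{\mu(R_i)} \in \Z$) and the framedness of the $R_i$ (so $e(\nu R_i) = 0$), forces $R_i \cdot R_i = 2\mu(R_i) \equiv 0 \pmod 2$. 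Hence each of the three kinds of change leaves $\tau$ invariant in $\Z/2$.

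For independence of the Whitney discs with fixed boundary arcs, given two families $\{W_i\}$ and $\{W'_i\}$ I would form the framed immersed spheres $R_i := W_i \cup_\partial (-W'_i)$ and observe that
\[
\tau(S,\{W_i\}) + \tau(S,\{W'_i\}) \equiv \sum_i S \cdot R_i \pmod 2,
\]
since each transverse interior intersection of $S$ with $\mathring{W_i}$ or with $\mathring{W'_i}$ contributes exactly once to $S \cdot R_i$ modulo 2. For independence of Whitney arcs with fixed pairing, since $S \cong S^2$ the arcs joining a given pair of double points are unique up to ambient isotopy within $S$, and such an isotopy is realised by a small ambient isotopy of the Whitney disc supported near its boundary, which does not alter the parity of its interior intersections with $S$.

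The main obstacle is independence of the pairing. Any two pairings of the double-point set are related by a sequence of elementary swaps of the form $\{(p,q),(r,s)\} \leftrightarrow \{(p,r),(q,s)\}$, so I would reduce to this case. Given framed Whitney discs $W_{pq}, W_{rs}$ for the first pairing, the plan is to use the sheet structure of $S$ at the points $p,q,r,s$ and auxiliary arcs on $S$ to band-sum these discs into framed Whitney discs $W'_{pr}, W'_{qs}$ realising the second pairing, and then invoke the first case. The technical points to verify are that the band sums can be arranged to preserve framedness (possibly after boundary twists, whose effect is absorbed into the $R_i$ of the first case) and that the resulting surfaces are genuinely discs rather than higher-genus surfaces, which is ensured by choosing the auxiliary arcs on $S^2$ compatibly with the sheet pairings at $p,q,r,s$. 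Alternatively, one can form the closed immersed surface $\Sigma$ obtained by capping off $W_{pq} \cup W_{rs} \cup -W'_{pr} \cup -W'_{qs}$ by discs in $S$ bounding the leftover boundary arcs, express the difference of the two $\tau$ values as the single intersection number $S \cdot \Sigma \pmod 2$, and then reduce via sphere surgeries in the simply connected $M$ to the $s$-characteristic argument above applied to framed spheres.
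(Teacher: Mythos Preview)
The paper does not supply its own proof here; the proposition is cited from Schneiderman--Teichner and used as a black box. Your overall strategy---reducing every change in Whitney data to an intersection $S\cdot R$ with a framed immersed $2$-sphere $R$, then using the $s$-characteristic hypothesis together with Lemma~\ref{lem:lambda-mu-e} to deduce $S\cdot R\equiv R\cdot R = 2\mu(R)\equiv 0\pmod 2$---is exactly the Schneiderman--Teichner mechanism, and your Step~1 is carried out correctly.

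Your Step~2 is where the argument is thin. The claim that the arc isotopy ``does not alter the parity of its interior intersections with $S$'' is the crux and needs justification: one must check that the isotopy in the domain $S^2$ can be taken to avoid all other preimages of self-intersection points. This does hold, because every such preimage is already an endpoint of some Whitney arc in the fixed system and hence lies on the boundary of the relevant complementary disc in $S^2$; but that deserves to be said rather than asserted. Even granting this, an isotopy in the domain cannot change which preimage of $q$ the arc starting at a given preimage of $p$ lands on, so the sheet-choice ambiguity at the double points is not covered by your isotopy argument and must be fed back through the framed-sphere mechanism. Likewise, in both routes you sketch for Step~3 the substantive missing verification is that the assembled closed surface has trivial normal Euler number; this is where the framed hypothesis on the $W_i$ and the local picture at the corners where several pieces meet do real work, and it is not automatic.
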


Consequently we write $\tau(S) \in \Z/2$, dropping the discs $\{W_i\}$ from the notation.
When we judge that the manifold $M$ is not immediately clear from the context, we use the notation $\tau_M(S)$ instead of $\tau(S)$.

Indeed, $\tau(S)$ is an invariant of the homotopy class $[S]\in \pi_2(M)$, as follows. Each class in~$\pi_2(M)$ can be represented by a generic immersion of a sphere $S$ in $M$ by~\cite[Immersion lemma, p.~13]{FQ}.  Perform local cusp moves to ensure that ~$\mu(S)=0$ and then compute~$\tau(S)$. Representatives of a given homotopy class with vanishing self-intersection number are regularly homotopic~\cite[Proposition~1.7]{FQ}, where by definition, a regular homotopy in the topological category is a concatenation of finger moves and (embedded) Whitney moves. (If $M$ admits a smooth structure, a generic regular homotopy can be decomposed into a sequence of such moves, and every regular homotopy can be perturbed to such a homotopy~\cite[Section~III.3]{GoGu}.) It is clear that either such move preserves $\tau(S)$. Consequently, the invariant $\tau$ is well-defined on homotopy classes, and we sometimes use the notation $\tau(x)$ for an $s$-characteristic class $x\in\pi_2(M)$ for a simply connected topological $4$-manifold $M$.

In particular, if such an $x\in \pi_2(M)$ is represented by a locally flat, embedded sphere, then $\tau(x)=0$, and thus $\tau$ gives an obstruction for a homotopy class to contain a locally flat embedding. Observe that $\tau$ does not see the orientation of a sphere, that is, $\tau(x)=\tau(-x)$ for an $s$-characteristic class $x\in\pi_2(M)$ in a simply connected topological $4$-manifold~$M$.


Next we recall a well-known formulation of the Arf invariant of a knot.

\begin{proposition}[{\cite{matsumoto78, freedman-kirby}\cite[Lemma~10]{cst-twisted-whitney}}]\label{prop:arf-definition}
Let $K$ be a knot in $S^3$ bounding a generically immersed disc $\Delta$ in $D^4$ with $\mu(\Delta)=0$. Since $\mu(\Delta)=0$, there exists a collection $\{W_i\}$ of framed, generically immersed Whitney discs pairing up the self-intersections of $\Delta$ and intersecting $\Delta$ in transverse double points in the interior of $\{\mathring{W_i}\}$. Then
\[
\Arf(K)=\sum_i \Delta \cdot \mathring{W_i} \mod{2}.
\]
\end{proposition}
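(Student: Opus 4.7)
The plan is to prove the formula in three stages: well-definedness of the right-hand side, independence of the choice of immersed disc $\Delta$, and identification with the Arf invariant via a Seifert surface model.

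First I would check that $\sum_i \Delta \cdot \mathring W_i \pmod 2$ does not depend on the choice of pairing of double points, Whitney arcs, or Whitney discs. Given two competing families $\{W_i\}$ and $\{W'_i\}$ pairing the same double points along the same Whitney arcs, the tube-sum $W_i \cup (-W'_i)$ produces a generically immersed sphere $R_i$ in $D^4$. Since $D^4$ is contractible, each $R_i$ is null-homotopic, so after homotoping off $\Delta$ we see $\Delta \cdot R_i = 0$, i.e.\ $\Delta \cdot \mathring W_i \equiv \Delta \cdot \mathring W'_i \pmod 2$. Changes of Whitney arc or of pairing of double points are handled by the same sort of argument, mirroring the well-definedness proof for $\tau$ recalled above.

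Next I would show independence of the immersed disc $\Delta$. Given two such discs $\Delta_0, \Delta_1$ bounding the same $K$ with $\mu(\Delta_i)=0$, I would form the generically immersed $2$-sphere
\[
S := \Delta_0 \cup_K (-\Delta_1) \subset S^4 = D^4 \cup_{S^3} (-D^4).
\]
Since $\pi_2(S^4)=0$, the sphere $S$ is null-homotopic and trivially $s$-characteristic, so $\tau(S)=0$. A Whitney disc family for $S$ can be chosen to split as a union of families supported in the two halves $D^4$ and $-D^4$ (making Whitney discs transverse to the separating $S^3$ and tubing away intersections), giving
\[
0 = \tau(S) = \sum_i \Delta_0 \cdot \mathring W_i^{(0)} + \sum_j \Delta_1 \cdot \mathring W_j^{(1)} \pmod 2.
\]
This forces the two sums to agree, making the invariant a function of $K$ alone.

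Finally, I would verify the formula on a convenient model. Choose a Seifert surface $F \subset S^3$ of genus $g$ for $K$ with a symplectic basis $\{a_i, b_i\}$ of $H_1(F;\Z/2)$ with respect to the intersection pairing. Push $\mathring F$ into $D^4$; since $D^4$ is simply connected, each $a_i$ bounds a generically immersed disc $D_i$. After tubing intersections between distinct $D_i$'s into a single $D_i$, surger $F$ along these $D_i$'s to produce an immersed disc $\Delta$ for $K$ with $\mu(\Delta)=0$. The self-intersections of $\Delta$ are accounted for by (a) self-intersections of the surgery discs $D_i$ and (b) intersections of $D_i$ with the pushed-off $b_i$-bands of $F$. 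Standard Whitney discs can be arranged using pieces of $F$ slit along the $a_i$, and a direct count identifies $\Delta \cdot \mathring W_i \equiv q(a_i)\, q(b_i) \pmod 2$, where $q\colon H_1(F;\Z/2) \to \Z/2$ is the mod-$2$ self-linking Arf enhancement. Summing gives $\Arf(K) = \sum_i q(a_i) q(b_i)$.

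The main obstacle is the bookkeeping in the last step: carefully matching self-intersections of $\Delta$ with Whitney disc pairings, and identifying the interior intersections of these Whitney discs with $\Delta$ as precisely the mod-$2$ quadratic products $q(a_i) q(b_i)$. Additionally, one must check framedness of the chosen Whitney discs during this construction, since the definition of $\tau$-type invariants requires framed Whitney discs and any framing discrepancies would need to be absorbed into the count via interior twists.
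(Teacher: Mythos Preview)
Your three-stage outline is essentially the same as the paper's sketch: the paper also reduces well-definedness to the vanishing of $\tau$ for a sphere in $S^4$ obtained by glueing two discs, and then computes on a Seifert surface model by pushing $F$ into $D^4$, surgering along framed discs on the $a_i$, and using discs on the $b_i$ as Whitney discs so that the framing corrections yield $\sum_i \lk(a_i,a_i^+)\lk(b_i,b_i^+)\pmod 2$. The only substantive discrepancy is in your stage~3: you propose building Whitney discs from ``pieces of $F$ slit along the $a_i$'', whereas the cleaner and standard construction (as in the paper) takes immersed discs bounded by the $b_i$, boundary-twisted to the correct framing, and this is precisely what makes the identification $\Delta\cdot\mathring W_i \equiv q(a_i)q(b_i)$ transparent.
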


For a sketch of a proof, see~\cite[Lemma~10]{cst-twisted-whitney}. We give a brief outline here for the convenience of the reader. The Arf invariant of a knot $K$ is equal to $\sum_i \lk(a_i,a_i^+)\lk(b_i,b_i^+)\mod{2}$ where $\{a_i,b_i\}$ is a symplectic basis for the first homology of some Seifert surface $F$ of $K$, represented by simple closed curves with $|a_i \pitchfork b_j| = \delta_{ij}$~\cite[Section~10, p.~105]{Lickorish:1993-1}. Given such a surface $F$ and curves $\{a_i,b_i\}$, construct an immersed disc bounded by $K$ by pushing the interior of $F$ into $D^4$ and surgering along correctly framed immersed discs bounded by the curves~$\{a_i\}$ (correct framing can be arranged by boundary twisting).  Construct Whitney discs for the  self-intersections using the immersed discs bounded by the $\{b_i\}$. Then all intersections are created in pairs, except for the $\sum \lk(a_i,a_i^+)\lk(b_i,b_i^+)$ intersections created when adjusting the framings of the discs bounded by $\{a_i,b_i\}$. This equals the Arf invariant of $K$ via the Seifert form definition.

That the count in Proposition~\ref{prop:arf-definition} is well-defined follows from glueing together two generically immersed discs bounded by $K$,  producing an $s$-characteristic generically immersed sphere in $S^4$ with vanishing $\tau$ (since $\pi_2(S^4)=0$).

The next lemma shows, in particular, that knots with Arf invariant $1$ are not $n$-shake slice.  This was shown by Robertello~\cite{robertello} using a different proof.

\begin{proposition}\label{prop:km-arf}\label{prop:arf-obstruction-2}
Let $n$ be an integer. Let $S$ be an immersed sphere  representing a generator of $\pi_2(X_n(K))\cong \Z$ for a knot $K$ with $\mu(S)=0$. Then $\tau(S) =\Arf(K)$.
\end{proposition}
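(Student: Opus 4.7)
Since $X_n(K)$ is simply connected (it is obtained from $B^4$ by attaching a single $2$-handle) and $\pi_2(X_n(K))\cong\Z$ is generated by $S$, every immersed sphere in $X_n(K)$ is homotopic to $kS$ for some $k\in\Z$, and $kS\cdot kS=k^2 n \equiv kn \equiv kS\cdot S \pmod 2$, so $S$ is $s$-characteristic. Thus $\tau(S)$ is defined, depends only on the homotopy class of $S$, and is insensitive to orientation. My plan is to exhibit a particularly convenient representative of the generator of $\pi_2(X_n(K))$ whose $\tau$ invariant can be read off directly from the Arf formula of Proposition~\ref{prop:arf-definition}.

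Write $X_n(K)=B^4\cup_{\nu K} h^2$ where $h^2=D^2\times D^2$, with core disc $D_c=D^2\times\{0\}$ (an embedded disc with $\partial D_c=K\subset S^3$) and cocore $\{0\}\times D^2$. Fix any generically immersed disc $D_0\subset B^4$ with $\partial D_0=K$ and arrange $\mu(D_0)=0$ by local cusp homotopies. Set
\[
\Sigma := D_c\cup_K D_0,
\]
which is a generically immersed $2$-sphere in $X_n(K)$. Because $\Sigma$ meets the cocore transversely in exactly one point, $[\Sigma]$ generates $\pi_2(X_n(K))\cong\Z$, and hence $\Sigma$ is homotopic to $\pm S$. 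Since $\tau$ is a homotopy invariant and $\tau(S)=\tau(-S)$, it suffices to prove $\tau(\Sigma)=\Arf(K)$.

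Note that $\mu(\Sigma)=\mu(D_c)+\mu(D_0)=0$, since the interiors of $D_c$ and $D_0$ lie in the disjoint open sets $\mathrm{Int}\,h^2$ and $\mathrm{Int}\,B^4$ respectively. Choose a collection $\{W_i\}$ of framed, generically immersed Whitney discs in $B^4$ pairing the self-intersections of $D_0$, with each $W_i$ meeting $D_0$ transversely in double points in $\mathring W_i$; such a system exists because $B^4$ is simply connected and $\mu(D_0)=0$. Since $W_i\subset B^4$ and $\mathring D_c\subset \mathrm{Int}\,h^2$, we have $W_i\cap D_c=\varnothing$, so the Whitney arcs on $\Sigma$ lie entirely in $D_0$ and every intersection of $\mathring W_i$ with $\Sigma$ is an intersection with $D_0$. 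Therefore
\[
\tau(\Sigma)=\sum_i \Sigma\cdot\mathring W_i \,=\, \sum_i D_0\cdot\mathring W_i \pmod 2,
\]
and the right-hand side equals $\Arf(K)$ by Proposition~\ref{prop:arf-definition}. This completes the reduction, giving $\tau(S)=\tau(\Sigma)=\Arf(K)$.

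\textbf{Main obstacle.} Nothing in this argument is deep: everything is a careful bookkeeping exercise combining homotopy invariance of $\tau$, the handle structure of $X_n(K)$, and the known Arf formula. The only point requiring a little care is choosing the Whitney discs inside $B^4$ (rather than in all of $X_n(K)$) so that they are automatically disjoint from the core $D_c$ of the $2$-handle; this is what makes the $\tau$ count coincide with the Arf count. The rest is standard.
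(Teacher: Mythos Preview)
Your proof is correct and follows essentially the same approach as the paper: construct a specific representative sphere as the union of the core of the $2$-handle with an immersed disc in $B^4$ having $\mu=0$, verify $s$-characteristicity via the $kn\equiv k^2n\pmod 2$ computation, and compute $\tau$ using Whitney discs chosen inside $B^4$ so that the count reduces to the Arf formula of Proposition~\ref{prop:arf-definition}. The only cosmetic difference is that you invoke homotopy invariance of $\tau$ explicitly at the outset to pass from $S$ to $\Sigma$, whereas the paper simply builds $S_K$ and implicitly relies on the earlier discussion that $\tau$ depends only on the homotopy class.
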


\begin{proof}
Given any null homotopy of $K$ in $D^4$, the union with the core of the $2$-handle of $X_n(K)$ is a sphere $S$ generating $\pi_2(X_n(K))$ but it might not have vanishing self-intersection number. However,  $n=\lambda(S,S)=2\mu(S) + e(\nu S)$, since $X_n(K)$ is simply connected (see Lemma~\ref{lem:lambda-mu-e}). Adding a local cusp to $S$ changes $\mu(S)$ by $\pm 1$ and the euler number $e(\nu S)$ by $\mp 2$, but does not change the homotopy class of $S$. Add local cusps as necessary to produce a sphere $S_K$, also generating $\pi_2(X_n(K))$, with $\mu(S_K)=0$ and $e(S_K)=\lambda(S_K,S_K)=n$.  Perform the local cusps inside $D^4$, so $S_K$ still intersects the 2-handle of $X_n(K)$ only in the core.

Next, we claim that $S_K$ is $s$-characteristic. To see this, let $R$ be an immersed sphere in $X_n(K)$. Then since $S_K$ generates $\pi_2(X_n(K)) \cong \Z$ there is some $a \in \Z$ such that~$R$ has homotopy class $a[S_K]$. Then $S_K\cdot R = an$ and $R\cdot R = a^2 n$, which have the same parity. So $S_K$ is indeed $s$-characteristic.

By definition, $\tau(S_K)$ is computed as
\[
\tau(S_K) = \sum_{i} S_K \cdot \mathring{W_i} \mod 2,
\]
where~$\{W_i\}$ is some collection of framed, generically immersed Whitney discs pairing up all the self-intersections of $S_K$ and intersecting $S_K$ in transverse double points in the interior of the~$\{W_i\}$. Since the self-intersections of $S_K$ lie in the $0$-handle $D^4$ of $X_n(K)$, we may and shall assume that the discs $\{W_i\}$ also lie in $D^4$.  In that case, the value of $\tau(S_K)$ is computed purely in $D^4$ and equals $\Arf(K)$ by Proposition~\ref{prop:arf-definition}.
\end{proof}

Recall that by Freedman's classification~\cite{F} of closed, simply connected $4$-manifolds, there exists a closed topological 4-manifold homotopy equivalent but not homeomorphic to~$\CP^2$, known as the \emph{Chern manifold}, and denoted $*\CP^2$.
To build $*\CP^2$, attach a $2$-handle to $D^4$ along a $+1$-framed knot~$J$ in $S^3 = \partial D^4$ with $\Arf(J)=1$, such as the figure eight knot, and cap off the boundary, which is a homology 3-sphere, with a contractible $4$-manifold $C$, which can be found by~\cite[Theorem~1.4$'$]{F}, \cite[9.3C]{FQ}.

The next lemma shows how to use the $\tau$ invariant to distinguish between the manifolds $\CP^2\#\overline{\CP^2}$ and $*\CP^2\#\overline{\CP^2}$.

\begin{lemma}\label{lem:tau_cp2-or-star}
Let $x$, $*x$, and $\overline{x}$ denote generators of, respectively, $\pi_2(\CP^2)\cong\pi_2(*\CP^2)\cong \pi_2(\overline{\CP^2})\cong \Z$. Then $x+\overline{x}$ and $*x+\overline{x}$ are $s$-characteristic classes in $\pi_2(\CP^2\#\overline{\CP^2})$ and $\pi_2(*\CP^2\#\overline{\CP^2})$ respectively. Moreover, $\tau(x+\overline{x})=0$ while $\tau(*x+\overline{x})=1$.
\end{lemma}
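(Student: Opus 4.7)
The plan is to verify the $s$-characteristic property directly from the $H_2$ intersection forms, then compute $\tau$ on concrete geometric representatives in each case. In both ambient $4$-manifolds $M$, simple connectedness gives $\pi_2(M)\cong H_2(M;\Z)\cong \Z\oplus\Z$, and the intersection form is $\left(\begin{smallmatrix}1 & 0 \\ 0 & -1\end{smallmatrix}\right)$ in the relevant basis $\{y_1,y_2\}=\{x,\overline{x}\}$ or $\{*x,\overline{x}\}$. For $R=ay_1+by_2$ one then has $R\cdot R=a^2-b^2$ and $(y_1+y_2)\cdot R=a-b$; since $a^2-b^2\equiv a-b\pmod 2$, both $x+\overline{x}$ and $*x+\overline{x}$ are $s$-characteristic, so $\tau$ is defined on each class.

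For $\tau(x+\overline{x})=0$, I take the standard embedded $\CP^1\subset\CP^2$ and $\overline{\CP^1}\subset\overline{\CP^2}$, arranged to miss the small balls removed to form $\CP^2\#\overline{\CP^2}$, and join them by an embedded tube. This exhibits $x+\overline{x}$ as the homotopy class of a locally flat embedded sphere, which forces $\tau$ to vanish since $\tau$ is an obstruction to locally flat embedding.

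For $\tau(*x+\overline{x})=1$, I use the construction of $*\CP^2 = D^4\cup_{J}h^2\cup C$, where $J\subset S^3=\partial D^4$ is a knot with $\Arf(J)=1$ to which the $2$-handle $h^2$ is attached with framing $+1$. Let $\Delta\subset D^4$ be a generically immersed disc with $\partial\Delta=J$; after adding local cusps inside $D^4$ I arrange $\mu(\Delta)=0$. Capping $\Delta$ off with the core of $h^2$ gives a generically immersed sphere $S_{*x}\subset D^4\cup h^2$ representing $*x$, whose self-intersections all lie in $D^4$ and coincide with those of $\Delta$, and which satisfies $\mu(S_{*x})=0$. Choose framed, generically immersed Whitney discs $\{W_i\}$ for these self-intersections inside $D^4$. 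In the other summand pick an embedded $\overline{\CP^1}\subset\overline{\CP^2}$ disjoint from the connect-sum region, and tube it to $S_{*x}$ by an embedded arc in general position, obtaining a generically immersed sphere $S$ representing $*x+\overline{x}$ with $\mu(S)=0$, paired by the same Whitney discs $\{W_i\}$. Since both the tube and $\overline{\CP^1}$ can be arranged disjoint from each $W_i$, we get
\[
\tau(*x+\overline{x})=\tau(S)=\sum_i S\cdot \mathring{W_i}=\sum_i \Delta\cdot \mathring{W_i}\equiv \Arf(J)=1\pmod 2,
\]
where the last equality is Proposition \ref{prop:arf-definition}.

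The only technical point requiring care is bookkeeping in general position: the tube, the $\overline{\CP^1}$ factor, and the local cusps must all be chosen so as not to create new intersections with the Whitney discs, so that the count of $S\cdot \mathring{W_i}$ really agrees modulo $2$ with the count of $\Delta\cdot \mathring{W_i}$ used by Proposition \ref{prop:arf-definition}. This is straightforward in the simply connected ambient manifold, and is the one step that would have to be spelled out rather than the $\tau$-computation itself, which follows immediately once the representative is in hand.
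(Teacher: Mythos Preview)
Your proof is correct and follows essentially the same approach as the paper's. The only cosmetic difference is that the paper packages the $*\CP^2$ computation by invoking Proposition~\ref{prop:arf-obstruction-2} (which already records that a generator of $\pi_2(X_1(J))$ has $\tau$ equal to $\Arf(J)$), whereas you inline that argument and appeal directly to Proposition~\ref{prop:arf-definition}; the underlying geometry---immersed disc for $J$ in $D^4$ capped by the handle core, Whitney discs chosen in $D^4$, then tubed to an embedded $\overline{\CP^1}$---is identical.
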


\begin{proof}
Let $S$ denote the sphere $\CP^1\subset\CP^2\subset \CP^2\#\overline{\CP^2}$ and let $\overline{S}$ denote the sphere $\overline{\CP^1}\subset\overline{\CP^2}\subset \CP^2\#\overline{\CP^2}$. Then without loss of generality, $x=[S]$ and $\overline{x}=[\overline{S}]$.

First we consider $\CP^2\#\overline{\CP^2}$. The pair $\{x,\overline{x}\}$ generates $\pi_2(\CP^2\#\overline{\CP^2})$. Then
\[
(x+\overline{x})\cdot (ax+b\overline{x}) \equiv a+b \equiv a^2 +b^2 \equiv (ax+b\overline{x})\cdot (ax+b\overline{x})\mod{2}.
\]
Thus, $x+\overline{x}$ is $s$-characteristic in $\pi_2(\CP^2\#\overline{\CP^2})$. Then $\tau(x+\overline{x})=0$ since the class $x+\overline{x}$ can be represented by an embedded sphere in $\CP^2\#\overline{\CP^2}$ produced by tubing $S$ and $\overline{S}$ together.

As described above, $*\CP^2$ is built by capping off the surgery trace $X_1(J)$ of some knot $J$ with $\Arf(J)=1$ by a contractible $4$-manifold.
Inclusion induces an isomorphism~$\pi_2(X_1(J))\cong\pi_2(*\CP^2)$ and as before, the generator is $s$-characteristic. By Proposition~\ref{prop:arf-obstruction-2}, the image of $*x$ in $X_1(J)$ can be represented by a sphere $*S\subset X_1(J)$ with trivial self-intersection number and self-intersections paired by a family of Whitney discs $\{W_i\}$ so that $\sum *S\cdot \mathring{W}_i \mod 2 =\Arf(J)=1$. Since $X_1(J)\subset *\CP^2$, both $*S$ and the same Whitney discs are contained within $*\CP^2$.

We know that the pair $\{*x, \overline{x}\}$ generates $\pi_2(*\CP^2\#\overline{\CP^2})$ and the corresponding intersection form is $[+1]\oplus [-1]$. The same calculation as above shows that $*x+\overline{x}$ is $s$-characteristic in $\pi_2(*\CP^2\#\overline{\CP^2})$.

Now to compute $\tau(*x+\overline{x})$, we tube together $*S$ and $\overline{S}$ in $*\CP^2\#\overline{\CP^2}$. Call the result $*S+ \overline{S}$. Since $\overline{S}$ is embedded and $S$ and $*S$ are disjoint, the discs $\{W_i\}$ from above form a complete set of Whitney discs for the self-intersections of $*S+\overline{S}$. Moreover, the discs~$\{W_i\}$ do not intersect~$\overline{S}$. It follows that $\tau(*x+\overline{x})=\Arf(J) = 1$.
\end{proof}

\subsection{Equating the Arf and Kirby-Siebenmann invariants}

Now we bring in the ingredients from the previous two sections.  Our aim is to apply the $\tau$ invariant to prove that $\Arf(K)$ computes the Kirby-Siebenmann invariant of a certain $4$-manifold.

Recall that we built a homology cobordism $V$ between $S^3_n(K)$ and the lens space $L(n,1)$ in Proposition~\ref{prop:homology-cobordism}. The following technical lemma will be used below to show that the union of $V$ and $D_n$, the $D^2$-bundle over $S^2$ with euler number $n$ is homeomorphic to $\mathbb{CP}^2 \# \ol{\mathbb{CP}^2}$ when $n$ is odd and $\Arf(K)=0$.

 \begin{lemma}\label{lem:U-homeo}
Let $n$ be an odd integer. Let $D_n$ denote the $D^2$-bundle over $S^2$ with euler number $n$. Suppose that $S_n^3(K)$ is homology cobordant to $L(n,1)$ via a cobordism $V$ which is homotopy equivalent to $L(n,1)\times I$ via a homotopy equivalence $h$ restricting to the identity on $L(n,1)$ and the map $f$ on $S_n^3(K)$ from Lemma~\ref{lem:deg-one-map}. Let $Z$ denote the space $-X_n(K)\cup_{S_n^3(K)} V\cup_{L(n,1)} D_n$.

If $\Arf(K)=0$ then $Z$ is homeomorphic to $\mathbb{CP}^2\#\ol{\mathbb{CP}^2}$. If $\Arf(K)=1$ then $Z$ is homeomorphic to $*\mathbb{CP}^2\#\ol{\mathbb{CP}^2}$.
 \end{lemma}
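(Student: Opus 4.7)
The plan is to verify that $Z$ is a closed, simply-connected topological $4$-manifold homotopy equivalent to $\CP^2\#\ol{\CP^2}$, invoke Freedman's classification to reduce the problem to computing $\ks(Z)$, and then pin down $\ks(Z)=\Arf(K)$ using the $\tau$-invariant of a specific $s$-characteristic sphere built from the pieces of $Z$.

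First, $Z$ is closed because the boundaries of $-X_n(K)$, $V$, and $D_n$ are identified pairwise. Two applications of van Kampen's theorem, using $\pi_1(X_n(K))=\pi_1(D_n)=1$ and the fact that the inclusions $S^3_n(K)\hookrightarrow V$ and $L(n,1)\hookrightarrow V$ induce isomorphisms $\pi_1\cong\Z/n$, give $\pi_1(Z)=1$. For the homotopy type, the hypothesised homotopy equivalence $h\colon V\to L(n,1)\times I$ combines with $\ol{f}\colon X_n(K)\to D_n$ from Lemma~\ref{lem:deg-one-map} and the identity on $D_n$ to yield a homotopy equivalence $Z\to -D_n\cup_{L(n,1)} L(n,1)\times I\cup_{L(n,1)} D_n$; the target deformation retracts to the double of $D_n$, which for $n$ odd is the nontrivial $S^2$-bundle over $S^2$, namely $\CP^2\#\ol{\CP^2}$. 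Thus $Z\simeq\CP^2\#\ol{\CP^2}$, and by Freedman's classification $Z$ is homeomorphic to either $\CP^2\#\ol{\CP^2}$ or $*\CP^2\#\ol{\CP^2}$, distinguished by $\ks(Z)\in\Z/2$.

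To compute $\ks(Z)$, I would construct a specific $s$-characteristic sphere $\Sigma$ in $Z$ and evaluate $\tau(\Sigma)$. Let $S_K\subset -X_n(K)\subset Z$ be the sphere obtained, as in the proof of Proposition~\ref{prop:arf-obstruction-2}, by capping the core of the $2$-handle with a generically immersed slice disc $\Delta\subset D^4$ for $K$ and adding local cusps until $\mu(S_K)=0$ and $e(\nu S_K)=-n$. Let $T\subset D_n\subset Z$ be the embedded zero section. Tubing $S_K$ and $T$ along a simple arc through $V$ yields a generically immersed sphere $\Sigma\subset Z$ with $\mu(\Sigma)=0$, whose only self-intersections are those of $\Delta$. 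A Mayer--Vietoris computation, using that for $n$ odd the sublattice generated by $[S_K]$ and $[T]$ surjects onto $H_2(Z;\Z/2)$, confirms that $[\Sigma]$ is $s$-characteristic. Choosing Whitney discs $\{W_i\}$ for the self-intersections of $\Delta$ entirely inside $D^4$, hence disjoint from $T$ and the tube, Proposition~\ref{prop:arf-definition} gives $\tau(\Sigma)\equiv\sum_i \Sigma\cdot\mathring{W}_i\equiv\Arf(K)\pmod{2}$.

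Finally, Lemma~\ref{lem:tau_cp2-or-star} computes $\tau(x+\ol{x})=0$ in $\CP^2\#\ol{\CP^2}$ and $\tau(*x+\ol{x})=1$ in $*\CP^2\#\ol{\CP^2}$. Under the homotopy equivalence $Z\simeq\CP^2\#\ol{\CP^2}$, a lattice calculation identifies $[\Sigma]$ with $n(x+\ol{x})$, which is congruent to $x+\ol{x}$ modulo $2$ since $n$ is odd. Granted that the $\tau$-invariant of an $s$-characteristic sphere with vanishing self-intersection number in a closed, simply-connected $4$-manifold depends only on the mod-$2$ reduction of its homology class, we conclude $\tau(\Sigma)=\ks(Z)$, and hence $\ks(Z)=\Arf(K)$; Freedman's classification then yields the lemma. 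I expect the main technical obstacle to be justifying precisely this last step: Lemma~\ref{lem:tau_cp2-or-star} computes $\tau$ only on the primitive classes $x+\ol{x}$ and $*x+\ol{x}$, whereas $[\Sigma]=n(x+\ol{x})$, so some form of mod-$2$ invariance for $\tau$---provable, for instance, by tubing $\Sigma$ with a sphere representing a doubled class whose support is arranged disjoint from the Whitney discs used to compute $\tau(\Sigma)$---must be established to complete the argument.
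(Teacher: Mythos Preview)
Your overall approach is the same as the paper's: construct $\Sigma$ by tubing the immersed generator $S_K\subset -X_n(K)$ to the embedded zero section $T\subset D_n$, observe all self-intersections and Whitney discs live in $D^4\subset -X_n(K)$, and deduce $\tau(\Sigma)=\Arf(K)$. The paper also verifies $s$-characteristicity in the same way and then applies Lemma~\ref{lem:tau_cp2-or-star}.

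The one genuine error is your lattice calculation. The class $[\Sigma]=[S_K]+[T]$ is \emph{not} $n(x+\ol{x})$; it is exactly $x+\ol{x}$ (up to signs, which $\tau$ does not see). To see this, work in the basis $\{[T],[\Delta]\}$ of $H_2(Z)$, where $\Delta$ is the dual sphere with $[T]\cdot[\Delta]=1$ and $[\Delta]\cdot[\Delta]=0$; then $[S_K]=[T]-n[\Delta]$, and writing $n=2k+1$ the unique (up to sign) classes of square $\pm 1$ are $[T]-k[\Delta]$ and $[T]-(k+1)[\Delta]$. Their sum is $2[T]-n[\Delta]=[T]+[S_K]=[\Sigma]$. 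Equivalently, $[\Sigma]=2[T]-n[\Delta]$ is primitive since $\gcd(2,n)=1$, so it cannot equal $n(x+\ol{x})$ for $|n|>1$. Once you correct this, Lemma~\ref{lem:tau_cp2-or-star} applies directly and the mod-$2$ invariance issue you flagged disappears entirely---no extra argument is needed.
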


 \begin{proof}
Recall from Lemma~\ref{lem:deg-one-map} that the map $f\colon S_n^3(K)\to L(n,1)$ extends to a homotopy equivalence $\overline{f}\colon X_n(K)\to D_n$ (this is merely the collapse map).
Observe that $D_n\cup_{L(n,1)} L(n,1)\times I \cup_{L(n,1)} D_n$ is an alternate decomposition of  the double of $D_n$, which is homeomorphic to  $S^2\widetilde{\times} S^2$ since $n$ is odd.

Now we define a function $G\colon Z\to S^2\widetilde{\times} S^2$.
\begin{equation*}
\begin{tikzcd}[column sep=2pt]
Z\arrow[d, swap, "G"]	&:=	&-X_n(K)\arrow[d, swap, "\overline{f}"]	&\cup	&V\arrow[d, swap, "h"]	 &\cup	&D_n\arrow[d, swap, "\id"]\\
S^2\widetilde{\times}S^2	&:=	&-D_n	&\cup	&L(n,1)\times I	&\cup	&D_n
\end{tikzcd}
\end{equation*}
We now show that $G$ is a homotopy equivalence.
Note that $Z$ is constructed from $V\cup D_n$ by adding a $2$-handle and then a $4$-handle, namely, the handles constituting $X_n(K)$ turned upside down. It is then easy to compute using the Seifert-van Kampen theorem and the Mayer-Vietoris sequence that $\pi_1(Z)=1$, $H_2(Z)\cong \Z\oplus \Z$, $H_4(Z)=\Z$. All other reduced homology groups vanish. Thus, $Z$ has the same homology groups as $S^2\widetilde{\times} S^2$. Note further that $H_2(Z)$ is generated by $[S]$ and $[\Delta]$, where $S$ is the base sphere of $D_n$ and $\Delta$ is represented by the union of the cocore of the $2$-handle of $X_n(K)$ and some null-homology for the meridian $\mu_K$ of $K$ in the surgery diagram for $S_n^3(K)$ in $V\cup D_n$. We will choose a specific $\Delta$ presently.

We know that $H_2(S^2\widetilde{\times} S^2)\cong \Z\oplus \Z$. Consider again the decomposition \[S^2\widetilde{\times} S^2=-D_n\cup_{L(n,1)} L(n,1)\times I \cup_{L(n,1)} D_n.\] Let $C$ denote the cocore of the $2$-handle of $D_n$. The boundary is the meridian $\mu_{U}$ of the unknot $U$ in the surgery diagram for $L(n,1)$ as the boundary of $D_n$.
We see that $H_2(S^2\widetilde{\times} S^2)$ is generated by $S$, the base sphere of $D_n$, and $\Delta_0:= -C\cup \mu_{U}\times I\cup C$. Note that $\Delta_0\cdot \Delta_0=0$. Now define $\Delta:=G^{-1}(\Delta_0)$. By construction, $f^{-1}(\mu_{U})=\mu_K$. Moreover, $\overline{f}^{-1}(C)$ is a cocore of the $2$-handle of $X_n(K)$ by Lemma~\ref{lem:deg-one-map}. Thus $G^{-1}(\mu_{U}\times I\cup C)$ is a null-homology for $\mu_K\subset S_n^3(K)$ in $V\cup D_n$ and $\Delta$ has the form promised in the previous paragraph. Consequently, $H_2(Z)\cong \Z\oplus \Z$ is generated by $\{S,\Delta\}$ and $H_2(S^2\widetilde{\times} S^2)\cong \Z\oplus \Z$ is generated by $\{S,\Delta_0)$. By construction, $G(S)=S$ and $G(\Delta)=\Delta_0$.  The map $G$ also induces an isomorphism $H_4(Z) \to H_4(S^2\widetilde{\times} S^2)$, which follows from a calculation using naturality of the Mayer-Vietoris sequences corresponding to the decompositions of $Z$ and $S^2\widetilde{\times} S^2$ in the definition of $G$ above.   Thus we have shown that $G$ induces isomorphisms on the homology groups of $Z$ and $S^2\widetilde{\times} S^2$. Since $Z$ and $S^2\widetilde{\times} S^2$ are simply connected, this completes the proof that $G$ is a homotopy equivalence by Whitehead's theorem and the fact that any $4$-manifold is homotopy equivalent to a cell complex. Moreover, since $G$ is a homotopy equivalence, $\Delta\cdot \Delta=\Delta_0\cdot \Delta_0=0$.

From now on we work only within the space $Z$. Note that $Z$ is a closed, simply connected, topological $4$-manifold with intersection form $[+1]\oplus [-1]$. By the classification of such $4$-manifolds~\cite{F}, $Z$ is homeomorphic to either $\mathbb{CP}^2\#\overline{\mathbb{CP}^2}$ or $*\mathbb{CP}^2\#\overline{\mathbb{CP}^2}$. The remainder of the proof will use Lemma~\ref{lem:tau_cp2-or-star} to determine the homeomorphism type of $Z$. To do this we need to find elements of $\pi_2(Z)$ with self-intersection $\pm 1$.

Let $S'$ denote an immersed sphere within $-X_n(K)\subset Z$ representing a generator of $\pi_2(-X_n(K))$. We know that $S'\cdot S'=-n$ within $Z$. Here the sign has changed since $Z$ contains the oriented manifold $-X_n(K)$ rather than $X_n(K)$. The pair $\{[S],[\Delta]\}$ is a basis for $H_2(Z)$ and we calculate that  $[S']=[S]-n[\Delta]$.
 Here we have also used the facts that $S'\cdot \Delta=1$, $S\cdot \Delta=1$, and $S\cdot S=n$. Add local cusps to arrange that $\mu(S')=0$ and thus $S'\cdot S'=e(S')=-n$.

Next we seek the classes in $H_2(Z)\cong \pi_2(Z)$ with self-intersection $\pm 1$. Since $n$ is odd, it can be represented as $n=2k+1$ for some integer $k$. Straightforward algebra implies that $[S]-k[\Delta]=[S']+(k+1)[\Delta]$ is the unique class, up to sign, with self-intersection $+1$ and that $[S']+k[\Delta]=[S]-(k+1)[\Delta]$ is the unique class, up to sign, that has self-intersection $-1$. That is,
\[(a[S] + b[\Delta])^2 = \pm 1 \,\Rightarrow \, a^2n + 2ab =\pm 1 \,\Rightarrow\, b= \frac{\pm1}{2a} - \frac{an}{2}\]
and so $b \in \Z$ only if $a=\pm 1$.

Our goal is to compute the $\tau$ invariant of the sum of these classes, since this determines the homeomorphism type of $Z$ by Lemma~\ref{lem:tau_cp2-or-star}. We must first check that the sum is $s$-characteristic. This is virtually the same computation as in Lemma~\ref{lem:tau_cp2-or-star}. We compute the sum  $([S]-k[\Delta])+([S']+k[\Delta])=[S]+[S']$. Let $a[S]+b[\Delta]$ be any class in $\pi_2(Z)$. Then $([S]+[S'])\cdot (a[S]+b[\Delta])\equiv a \mod{2}$ and $(a[S]+b[\Delta])\cdot {(a[S]+b[\Delta])}\equiv a^2 \mod{2}$. Since $a\equiv a^2\mod{2}$, this shows that $[S]+[S']$ is $s$-characteristic.

Finally, we compute $\tau(([S]-k[\Delta])+([S']+k[\Delta]))=\tau([S]+[S'])$. Represent the class $[S]+[S']$ by a sphere $\Sigma$ obtained by tubing together $S$ and $S'$. Observe that $\mu(\Sigma)=0$ since $\mu(S')=\mu(S)=\lambda(S,S')=0$.

In order to compute $\tau(\Sigma)$, pair up the self-intersections of $\Sigma$ by framed, generically immersed Whitney discs with pairwise disjoint and embedded boundaries. All the self-intersections of $\Sigma$ arise from self-intersections of $S'$ since $S$ is embedded and $S$ and $S'$ are disjoint. We have Whitney discs $\{W_i\}$ for the self-intersections of $S'$  within $D^4\subset -X_n(K)$ and by Proposition~\ref{prop:arf-obstruction-2}, we know that $\Arf(K)=\sum_i S'\cdot \mathring{W_i}\mod{2}$. Then $\tau(\Sigma)=\Arf(K)$ and by Lemma~\ref{lem:tau_cp2-or-star}, $Z$ is homeomorphic to $\mathbb{CP}^2\#\overline{\mathbb{CP}^2}$ if $\Arf(K)=0$ and to $*\mathbb{CP}^2\#\overline{\mathbb{CP}^2}$ if $\Arf(K)=1$. This completes the proof.
\end{proof}

\begin{proposition}\label{proposition:identifying}
Let $n$ be odd. Let $D_n$ denote the $D^2$-bundle over $S^2$ with euler number $n$. Suppose that $S_n^3(K)$ is homology cobordant to $L(n,1)$ via a cobordism $V$ which is homotopy equivalent to $L(n,1)\times I$ via a homotopy equivalence $h$ that restricts to the identity on $L(n,1)$ and to the degree one normal map $j$ on $S_n^3(K)$ from Lemma~\ref{lem:deg-one-map}. Let $X$ be the union of $V$ and $D_n$. Then $\Arf(K) = \ks(X)$.
\end{proposition}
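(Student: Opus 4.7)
The key observation is that the manifold $X$ appears as a piece of the closed 4-manifold $Z := -X_n(K) \cup_{S_n^3(K)} V \cup_{L(n,1)} D_n = -X_n(K) \cup_{S_n^3(K)} X$ analyzed in Lemma~\ref{lem:U-homeo}. So the plan is to compute $\ks(X)$ by comparing the Kirby-Siebenmann invariants of $Z$ and of $X_n(K)$.

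\textbf{Step 1: Use additivity of Kirby-Siebenmann.} The Kirby-Siebenmann invariant is additive under gluing along a 3-manifold (every 3-manifold is smoothable, so no obstruction arises on the glueing locus). Applying this to the decomposition of $Z$ gives
\[
\ks(Z) \;=\; \ks(-X_n(K)) + \ks(X) \;\in\; \Z/2.
\]
Since $X_n(K)$ is a smooth manifold, $\ks(-X_n(K)) = 0$, and therefore $\ks(X) = \ks(Z)$.

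\textbf{Step 2: Identify $\ks(Z)$ via Lemma~\ref{lem:U-homeo}.} Since $n$ is odd and $V$ is a homology cobordism homotopy equivalent to $L(n,1) \times I$ via a homotopy equivalence restricting to $\id$ and to the collapse map $f$, we are in precisely the setting of Lemma~\ref{lem:U-homeo}. That lemma gives
\[
Z \cong
\begin{cases}
\CP^2 \# \overline{\CP^2} & \text{if } \Arf(K) = 0, \\
*\CP^2 \# \overline{\CP^2} & \text{if } \Arf(K) = 1.
\end{cases}
\]
Since $\ks(\CP^2) = 0$ and $\ks(*\CP^2) = 1$, additivity of $\ks$ under connected sum yields $\ks(Z) = \Arf(K) \in \Z/2$.

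\textbf{Step 3: Conclude.} Combining Steps 1 and 2,
\[
\ks(X) \;=\; \ks(Z) \;=\; \Arf(K),
\]
as desired. The only potential subtlety is verifying that the hypotheses of Lemma~\ref{lem:U-homeo} are literally those given in the proposition (they are, once one recalls that $j$ in the statement is the map $f$ of Lemma~\ref{lem:deg-one-map}), so the proof is essentially a two-line bookkeeping argument once Lemma~\ref{lem:U-homeo} is in hand. There is no remaining obstacle; the heavy lifting (the $\tau$-invariant computation distinguishing $\CP^2\#\overline{\CP^2}$ from $*\CP^2\#\overline{\CP^2}$) has already been carried out in the preceding lemma.
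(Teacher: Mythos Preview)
Your proof is correct and follows essentially the same approach as the paper: form $Z = -X_n(K) \cup X$, use additivity of the Kirby--Siebenmann invariant together with smoothness of $X_n(K)$ to get $\ks(X) = \ks(Z)$, and then invoke Lemma~\ref{lem:U-homeo} to identify $Z$ and compute $\ks(Z) = \Arf(K)$. The only cosmetic difference is that the paper phrases the case $\Arf(K)=0$ by noting $\CP^2\#\overline{\CP^2}$ is smooth rather than citing additivity under connected sum, but the content is identical.
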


\begin{proof}
Consider the union $Z:= -X_n(K)\cup X$. Since $\ks(X_n(K))=0$, we have that $\ks(Z) = \ks(X)$ by additivity of the Kirby-Siebenmann invariant~\cite[Theorem~8.2]{TheGuide}. From Lemma~\ref{lem:U-homeo}, we know that if $\Arf(K)=0$, then $Z$ is homeomorphic to $\mathbb{CP}^2 \# \ol{\mathbb{CP}^2}$, which is smooth and thus $\ks(Z)=0$. If $\Arf(K)=1$, we saw that $Z$ is homeomorphic to $*\mathbb{CP}^2 \# \ol{\mathbb{CP}^2}$ and we have that $\ks(*\mathbb{CP}^2 \# \ol{\mathbb{CP}^2})=\ks(*\mathbb{CP}^2)+\ks(\ol{\mathbb{CP}^2})=1$ since the Kirby-Siebenmann invariant is additive under connected sum. This completes the proof.
\end{proof}

\section{Proof of Theorem~\ref{theorem:Zn-shake-slice-thm}}

In order to prove Theorem~\ref{theorem:Zn-shake-slice-thm}, we need the following result of Boyer.

\begin{thm}[{\cite[Theorems~0.1 and 0.7; Proposition~0.8(i)]{Boyer1}}]\label{thm:Boyer}
For $i=1,2$, let $V_i$ be a compact, simply connected, oriented, topological 4-manifold with boundary a rational homology 3-sphere.

 An orientation preserving homeomorphism  $f\colon \partial V_1\to \partial V_2$  extends to an orientation preserving homeomorphism $F \colon V_1 \to V_2$ if and only if the following two conditions hold.
\begin{enumerate}
\item There exists an isomorphism $\Lambda\colon H_2(V_1) \to H_2(V_2)$, inducing an isometry of intersection forms such that the following diagram commutes:
\[
\begin{tikzcd}
0 \arrow{r} &H_2(V_1) \arrow{r} \arrow{d}{\Lambda} & H_2(V_1, \partial V_1) \arrow{r} & H_1(\partial V_1) \arrow{r} \arrow{d}{f_*} \arrow{r} & 0\\
0 \arrow{r} &H_2(V_2) \arrow{r} &H_2(V_2, \partial V_2) \arrow{r} \arrow{u}{\Lambda^*}  & H_1(\partial V_2) \arrow{r} & 0,
\end{tikzcd}
\]
where $\Lambda^*$ indicates the $\Hom$-dual of $\Lambda$, together with the implicit use of the identifications $H_2(V_i,\partial V_i)\cong H^2(V_i)\cong\Hom(H_2(V_i),\Z)$, coming from Poincar\'{e}-Lefschetz duality, and the universal coefficient theorem respectively.

\item  Either the intersection form on $H_2(V_1)$ is even or    $\ks(V_1)= \ks(V_2)$.
\end{enumerate}
\end{thm}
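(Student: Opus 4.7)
The ``only if'' direction is immediate. Given a homeomorphism $F\colon V_1\to V_2$ extending $f$, set $\Lambda := F_*\colon H_2(V_1)\to H_2(V_2)$; this is an isometry of intersection forms, and the required diagram commutes by naturality of Poincar\'e-Lefschetz duality applied to the map of pairs $F$. The Kirby-Siebenmann invariant is a topological invariant, so (2) holds automatically.

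For the ``if'' direction, the plan is two stages. \emph{Stage 1: build a homotopy equivalence of pairs $\widetilde F\colon (V_1,\partial V_1)\to(V_2,\partial V_2)$ extending $f$ and inducing $\Lambda$.} Since each $V_i$ is simply connected with rational homology sphere boundary, standard obstruction theory shows that $V_i$ has the homotopy type, relative to $\partial V_i$, of $\partial V_i$ with a collection of $2$-cells attached realising a basis of $H_2(V_i)$, capped off by a single $4$-cell whose attaching map is determined up to homotopy by the intersection form and the connecting map $H_2(V_i,\partial V_i)\to H_1(\partial V_i)$. I would define $\widetilde F$ inductively: send $\partial V_1$ via $f$, extend across the $2$-cells using $\Lambda$ (the commutative diagram in (1) is precisely the compatibility needed to match attaching maps in $\partial V_2$), and extend across the top $4$-cell, where the obstruction lies in $\pi_3$ of the $2$-skeleton and vanishes because $\Lambda$ is an isometry of intersection forms. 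A Whitehead's theorem argument, combined with a Mayer-Vietoris computation, then promotes $\widetilde F$ to a homotopy equivalence of pairs.

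\emph{Stage 2: upgrade $\widetilde F$ to a homeomorphism via topological surgery.} The fundamental group being trivial is good, so the Freedman-Quinn $4$-dimensional surgery machinery applies. The datum $\widetilde F$ gives a class $[\widetilde F]\in \mathcal S^{\text{TOP}}(V_2,\partial V_2)$, and the goal is to show $[\widetilde F] = [\id]$. Using the Sullivan-Wall surgery exact sequence
\[
L_5(\Z)\longrightarrow \mathcal S^{\text{TOP}}(V_2,\partial V_2)\longrightarrow [V_2,\partial V_2;G/\text{TOP},*]\longrightarrow L_4(\Z),
\]
with $L_5(\Z)=0$ and $L_4(\Z)\cong\Z$ detected by (a multiple of) the signature, a standard Postnikov analysis of $G/\text{TOP}$ through dimension $4$ identifies the normal invariants with $H^2(V_2;\Z/2)\oplus H^4(V_2,\partial V_2;\Z)$. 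The $H^4$-summand maps isomorphically (up to scaling) to $L_4(\Z)$ and is killed by the agreement of signatures coming from (1), while the residual $H^2(\cdot;\Z/2)$-component detects precisely the difference $\ks(V_1) - \ks(V_2)\in\Z/2$.

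\emph{The main obstacle} is the dichotomy in (2). When the intersection form on $H_2(V_1)$ is odd, the Kirby-Siebenmann clause is imposed as a hypothesis and the argument concludes; the interesting point is the even case, where I must show $\ks(V_1) = \ks(V_2)$ is a consequence of (1). I would glue along the boundary to form the closed, simply connected topological $4$-manifold $M := V_1 \cup_f (-V_2)$. By Novikov additivity, $\sigma(M) = \sigma(V_1) - \sigma(V_2) = 0$; moreover, since $\lambda_{V_1}$ is even and $\Lambda$ is an isometry, $\lambda_{V_2}$ is even, so the intersection form $\lambda_{V_1}\oplus(-\lambda_{V_2})$ of $M$ is even and $M$ is topologically spin. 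The topological version of Rochlin's theorem for closed simply connected spin $4$-manifolds gives $\ks(M) \equiv \sigma(M)/8 \pmod 2 = 0$. Combined with additivity $\ks(M) = \ks(V_1) + \ks(V_2) \in \Z/2$ under union along the boundary, this forces $\ks(V_1) = \ks(V_2)$, as required.
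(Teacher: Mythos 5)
This statement is a theorem of Boyer, which the paper cites (\cite[Theorems~0.1 and 0.7; Proposition~0.8(i)]{Boyer1}) and does not prove, so there is no in-paper proof to compare against. Your proposal is an independent attempt to reprove Boyer's result, and the overall framework — build a homotopy equivalence of pairs using obstruction theory and condition~(1), then run the topological surgery exact sequence with $L_5(\Z)=0$ and analyse normal invariants in terms of $H^2(\cdot;\Z/2)\oplus H^4(\cdot;\Z)$ — is the right one and is, as far as I can tell, consonant with the way Boyer actually argues. The ``only if'' direction is fine.

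There is, however, a genuine gap in your treatment of the ``even'' branch of condition~(2). You write that the intersection form of $M=V_1\cup_f(-V_2)$ is $\lambda_{V_1}\oplus(-\lambda_{V_2})$, and then deduce from its evenness that $M$ is spin. Neither step is correct as stated. Since $\partial V_1$ is a rational homology sphere, Mayer--Vietoris gives a short exact sequence
\[0\to H_2(V_1)\oplus H_2(V_2)\to H_2(M)\to H_1(\partial V_1)\to 0,\]
so $\lambda_{V_1}\oplus(-\lambda_{V_2})$ is only a finite-index (index $|H_1(\partial V_1)|$) sublattice of the unimodular lattice $(H_2(M),\lambda_M)$, not all of it. Moreover, evenness of a finite-index sublattice does \emph{not} imply evenness of the ambient unimodular lattice: for instance the index-two sublattice $\{(a,b):a\equiv b\ (\mathrm{mod}\ 2)\}\subset(\Z^2,\operatorname{diag}(1,-1))$ has Gram matrix $\left(\begin{smallmatrix}0&2\\2&0\end{smallmatrix}\right)$, which is even, while the ambient form is odd. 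The correct route is to show directly that $M$ is spin. Evenness of $\lambda_{V_1}$ (with $V_1$ simply connected) makes $V_1$ spin, and by $\Lambda$-isometry so is $V_2$; but $w_2(M)$ need only vanish after restriction to each piece, so, when $H^1(\partial V_1;\Z/2)\ne 0$, one must further check that $f$ carries the spin structure on $\partial V_1$ induced by $V_1$ to the one on $\partial V_2$ induced by $V_2$. That compatibility is precisely what the commutative diagram in condition~(1) is meant to enforce — via the identification of linking forms and their quadratic enhancements — but your sketch does not invoke it at this point, so the step is unjustified. Once the spin-structure matching is established, the rest of your argument (Novikov additivity, topological Rochlin, additivity of $\ks$) goes through.
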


In our applications, the homeomorphism $f$ we propose to extend will be the identity map. We remark that when using this theorem, the specific homeomorphism $f$, or more precisely the induced map $f_*$, is highly significant.  We illustrate this with an example.

\begin{example}
A construction of Brakes gives examples of knots $K, J \subset S^3$ with homeomorphic $n$-surgeries but with non-homeomorphic $n$-traces, as follows. By \cite[Example~3]{Brakes80}, for any two distinct integers $a$ and $b$ with $|a|, |b| > 1$, the knots $K_{a,b}:= C_{a, ab^2+1}(T_{b,b+1})$ and $K_{b,a}:= C_{b, a^2b+1}(T_{a,a+1})$ have homeomorphic $(a^2b^2-1)$-surgeries. Brakes' homeomorphism induces a map on the first homology groups of the boundary, which are isomorphic to $\Z/ (a^2b^2-1)$ that is given by $[\mu(K_{a,b})] \mapsto ab [\mu(K_{b,a})]$.  Theorem~\ref{thm:Boyer} immediately implies that this homeomorphism of the boundaries does not extend to the traces. We now  argue that \emph{no} homeomorphism of the  boundaries can extend to the traces by computing certain Tristram-Levine signatures of the knots.

It is straightforward to verify that Theorem~\ref{thm:Boyer} implies that if $f \colon S^3_n(K) \to S^3_n(J)$ is a homeomorphism which extends to a homeomorphism of the $n$-traces, then $f_*([\mu_K])= \pm [\mu_J] \in H_1(S^3_n(J))$. An argument as in Proposition~\ref{prop:TlsigsandCGsigs}  then shows that if $K$ and $J$ have homeomorphic $n$-traces then $\sigma_{\xi}(K)= \sigma_{\xi}(J)$ for every $n$\ts{th} root of unity~$\xi$. However, a straightforward computation using Litherland's formula for the signatures of a satellite knot and the well-known formula for the signatures of a torus knot~\cite{Litherland:1979-1} shows that
\[\sigma_{\xi_{224}^{35}}(K_{3,5})= -64 \neq -60= \sigma_{\xi_{224}^{35}}(K_{5,3}).
\]
and hence $K_{3,5}= C_{3, 76}(T_{5,6})$ and $K_{5,3}= C_{5, 46}(T_{3,4})$ do not have homeomorphic $224$-traces despite having  homeomorphic $224$-surgeries.
We remark to those interested in shake concordance that this example shows that even homeomorphism of the $n$-surgery is not enough to imply $n$-shake concordance for general $n \in \mathbb{Z}$.
\end{example}

After this brief interlude, we  prove the main theorem.

\begin{proof}[Proof of Theorem~\ref{theorem:Zn-shake-slice-thm}]
Suppose that the generator of $\pi_2(X_n(K))$ can be represented by a locally flat embedded sphere $S$ such that $\pi_1(X_n(K)\sm S)\cong \Z/n$. Then Propositions~\ref{prop:trivialh1} and~\ref{prop:signaturesobstruct} establish conditions~\eqref{item:trivialh1} and~\eqref{item:signaturesobstruct} respectively, using Lemma~\ref{lem:shake-slice-to-hom-cob}. Condition~\eqref{item:trivialarf} is established in Proposition~\ref{prop:arf-obstruction-2} for all $n$, and another proof is outlined in Remark~\ref{prop:Brown-Kervaire} for even $n$.

Now consider the converse. Using Conditions~\eqref{item:trivialh1} and~\eqref{item:signaturesobstruct}, Proposition~\ref{prop:homology-cobordism} constructs a homology cobordism $V$ between $S^3_n(K)$ and $L(n,1)$ along with a homotopy equivalence to $L(n,1)\times I$ restricting to the identity map on $L(n,1)$ and the standard degree one collapse map on $S^3_n(K)$. Let $X$ denote the union of $V$ and the disc bundle $D_n$ over $S^2$ with Euler number $n$. Note that $\pi_1(X)=\{1\}$, $\pi_2(X)=\Z$, and the intersection form is $[n]$, which presents the linking form $[1/n]$ on $S_n^3(K)$.

Suppose $n$ is even. Then Boyer's classification (Theorem \ref{thm:Boyer}) implies that $X \cong X_n(K)$, extending the identity map on the boundary.
Suppose $n$ is odd. By Proposition~\ref{proposition:identifying}, $\Arf(K)=0$ implies that $\ks(X)=0$, which implies by Boyer's classification (Theorem \ref{thm:Boyer}) that $X \cong X_n(K)$, extending the identity map on the boundary.  In either case, the image of the zero section of $D_n$ in $X_n(K)$ gives rise to a locally flat embedded sphere in~$X_n(K)$ representing a generator of $\pi_2(X_n(K))$.
\end{proof}

\section{$\pm1$-shake sliceness}\label{sec:n=1}

Recall that when $n=\pm1$, the three conditions of Theorem~\ref{theorem:Zn-shake-slice-thm} reduce to $\Arf(K)=0$. As noted in the introduction, there is a quick proof that if $\Arf(K)=0$ then $K$ is $1$-shake slice.

\begin{example}[The `if' direction when $n=1$]\label{ex:1}
Let $K$ be a knot with $\Arf(K)=0$. Since $S^3_{ 1}(K)$ is a homology sphere, it bounds a contractible 4-manifold $C$, by~\cite[Theorem~1.4$'$]{F} (see also~\cite[9.3C]{FQ}). By removing a small ball from $C$ we obtain a simply connected homology cobordism~$V$ from $S^3_{1}(K)$ to $S^3=S^3_{1}(U)$.
Define $X:= V \cup_{S^3} X_{1}(U) \cong C \# \CP^2$, and observe that $X$ is a simply connected $4$-manifold with boundary $S^3_{\pm1}(K)$ and intersection form $[ 1 ]$. By Boyer's classification (Theorem \ref{thm:Boyer}), the manifold $X$ is homeomorphic to $X_{1}(K)$ if and only if the Kirby-Siebenmann invariant~$\ks(X)=\ks(X_{1}(K)=0$.

Since the Kirby-Siebenmann invariant is additive under connected sum~\cite[Theorem~8.2]{TheGuide} and $\CP^2$ is smooth, we have $\ks(X)=\ks(C)+\ks(\CP^2) = \ks(C)$. Moreover $C$ is contractible and hence is a topological spin manifold. By~\cite[p.~165]{FQ} and~\cite{Gonzalez-Acuna}, $\ks(C)=\mu(S^3_1(K))=\Arf(K)=0$, where $\mu(S^3_1(K))$ is the Rochlin invariant of the homology sphere~$S^3_1(K)$. Thus we have a homeomorphism $X \to X_1(K)$ and the image of $\CP^1 \subset \CP^2$ is an embedded sphere with simply connected complement representing the generator of $\pi_2(X_1(K))$.
\end{example}


\subsection{A Seifert surface approach}
Now we describe yet another proof that, if $\Arf(K)=0$, then $K$ is $1$-shake slice. This proof is Seifert surface based and has the advantage that we control the number of intersections of the resulting 2-sphere with the cocore of the 2-handle, yielding an upper bound on the $1$-shaking number of knots. A similar proof for Theorem~\ref{theorem:Zn-shake-slice-thm} when $|n|\geq 2$, including similar control on the $n$-shaking number of $\Z/n$-shake slice knots, seems possible in principle, but we have not managed to find it yet.

\begin{definition}
	A $(2k+1)$-component $n$-\emph{shaking} of a knot~$K$ is a link~$S_{2k+1, n}(K)$ obtained by taking $2k+1$ push-offs with respect to the $n$-framing, oriented so that $k+1$ push-offs are oriented the same as $K$, and $k$ push-offs are oriented in the opposite direction.
\end{definition}
The link $S_{2k+1, n}(K)$ is the (untwisted) satellite link with companion~$K$ and pattern~$S_{2k+1, n}$ as in Figure~\ref{fig:SPattern}.  It is determined up to isotopy by $K$, $n$, and $k$.
\begin{figure}[h]
	\includegraphics[width=6cm]{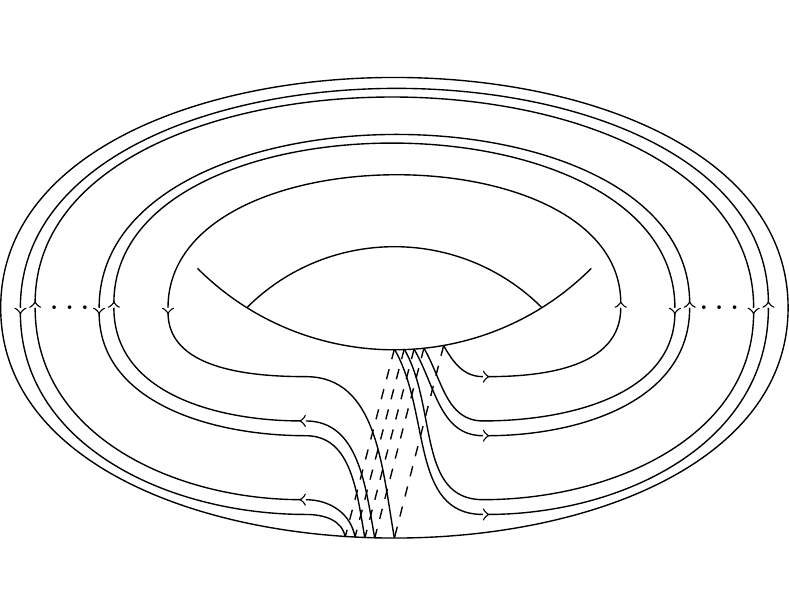}
	\caption{The pattern~$S_{3, 1}$}
	\label{fig:SPattern}
\end{figure}

Now we explain the well-known relationship between the existence of an embedded sphere in the $n$-trace~$X_n(K)$ and shakings~$S_{2k+1,n}(K)$. Recall that a link~$L$ is \emph{weakly slice} if $L$ bounds a locally flat planar surface in~$D^4$.

\begin{lemma}\label{lem:CocoreShaking} Let $K$ be an oriented knot. There exists a surface of genus~$g$ that represents the generator of~$H_2(X_n(K); \Z)$ if and only if, for some~$k \geq 0$, the shaking~$S_{2k+1,n}(K) \subset S^3$ bounds a surface of genus~$g$ in $D^4$. 
        Consequently, $K$ is $n$-shake slice if and only if $K$ admits a weakly slice $n$-shaking $S_{2k+1,n}(K)$ for some $k \in \Z$.
\end{lemma}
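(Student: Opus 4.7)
The plan is to decompose $X_n(K) = D^4 \cup h$, where $h = D^2 \times D^2$ is the $n$-framed $2$-handle, and then exchange closed surfaces in $X_n(K)$ for bounded surfaces in $D^4$ by slicing along $h$. The cocore $C = \{0\} \times D^2 \subset h$ is properly embedded with $\partial C$ a meridian of $K$ in $S_n^3(K)$, and is Poincar\'e--Lefschetz dual to the generator of $H_2(X_n(K);\Z)\cong\Z$. Consequently, a closed oriented surface $\Sigma \subset X_n(K)$ represents the generator if and only if its algebraic intersection with $C$ equals $+1$. Throughout I would work in the locally flat category, invoking topological transversality and the existence of linear normal bundles for locally flat submanifolds from~\cite{FQ}.

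For the forward direction, let $\Sigma \subset X_n(K)$ be a closed oriented connected surface of genus $g$ representing the generator; connectedness may be arranged by tubing. Put $\Sigma$ in general position transverse to $C$. Since the algebraic intersection is $+1$, the geometric count is $|\Sigma\cap C|=2k+1$ for some $k\geq 0$, with $k+1$ positive and $k$ negative intersection points. Using the linear normal bundle of $C$ in $h$, isotope $\Sigma$ so that inside $h$ it consists of exactly $2k+1$ parallel push-offs of the core $D^2\times\{0\}$, with $k+1$ oriented as the core and $k$ oppositely. Then $F := \Sigma \cap D^4$ is a compact oriented surface in $D^4$ whose boundary components are $n$-framed push-offs of $K$ carrying precisely the orientation pattern defining $S_{2k+1,n}(K)$. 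Since $\chi(F)=\chi(\Sigma)-(2k+1)=(2-2g)-(2k+1)$, the surface $F$ has genus $g$ with $2k+1$ boundary components.

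For the reverse direction, suppose $F\subset D^4$ is an oriented surface of genus $g$ with $\partial F=S_{2k+1,n}(K)$. Each component of the shaking is an $n$-framed push-off of $K$, hence bounds a parallel copy of the core disc $D^2\times\{*\}\subset h$. Capping off each boundary circle of $F$ with the corresponding core push-off yields a closed oriented surface $\Sigma\subset X_n(K)$ of the same genus $g$; by construction $\Sigma$ meets $C$ in $k+1$ positive and $k$ negative points, so its algebraic intersection with $C$ is $+1$ and $\Sigma$ represents the generator.

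The consequence for $n$-shake sliceness is immediate by specialising to $g=0$: the generator is represented by a locally flat embedded $2$-sphere in $X_n(K)$ if and only if some $S_{2k+1,n}(K)$ bounds a genus zero (that is, planar) locally flat surface in $D^4$, which is the definition of weak sliceness. The only real technical obstacle is executing the transversality, normal bundle, and cap-off operations in the locally flat rather than smooth category, but these are standard applications of the Freedman--Quinn machinery in~\cite{FQ}.
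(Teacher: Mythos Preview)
Your proof is correct and follows essentially the same approach as the paper: cap off a filling surface with parallel cores for one direction, and make the closed surface transverse to the cocore and cut to get back to $D^4$ for the other. You supply a bit more detail than the paper does---the Euler characteristic check that the genus is preserved, and the explicit isotopy arranging $\Sigma \cap h$ to be parallel core discs---but the underlying idea is identical.
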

\begin{proof}
The if direction follows from the observation that any filling surface in $D^4$ for the shaking~$S_{2k+1,n}(K)$ can be capped of with $2k+1$ parallel copies of the two-handle to produce a surface that represents the generator of~$H_2(X_n(K); \Z)$.

For the only if direction, let $F$ be a surface of genus~$g$ that represents the generator of~$H_2(X_n(K); \Z)$.
Isotope~$F$ so that it is transverse to the cocore of the $2$--handle. Cut~$X_n(K)$ along the cocore~$C$ to obtain back~$D^4$. This punctures~$F$ to a surface~$F'$. The surface~$F'$ has the same genus as~$F$ and is bounded by the shaking~$S_{2k+1,n}(K)$, where~$2k+1$ is the geometric intersection number of $F$ with the cocore~$C$.
\end{proof}

For $1$-shakings, we provide the following explicit variant of our main result Theorem~\ref{theorem:Zn-shake-slice-thm}.
Recall that the \emph{$($topological$)$ $\Z$--slice genus} $\gZ(L)$ of a link $L$ is the smallest genus among $\Z$-slice surfaces for $L$. Here, a \emph{$\Z$--slice surface} for~$L$ is a properly, locally flatly embedded, compact, connected, and orientable surface in~$D^4$ with boundary~$L$ and infinite cyclic fundamental group of the complement.
For knots, the topological $\Z$-slice genus equals the algebraic genus~$\galg$~\cite{FellerLewark_19}, which is a quantity depending only on the $S$-equivalence class of the Seifert form of knots and that in particular satisfies $\gZ(K)=\galg(K)\leq \deg(\Delta_K)/2\leq g_3(K)$. Here $g_3$ denotes the 3-genus.

For our purposes, it will suffice to define $\galg(K)$ to be the minimal $g$ such that there exists a Seifert surface $S$ for $K$ and a basis for $H_1(S)$ with respect to which the Seifert form is given by $\left[ \begin{array}{cc} A & B \\ B^T &C \end{array} \right]$, where $C$ is a $2g \times 2g$ matrix and $A$ is \emph{Alexander trivial}, i.e.\ $\det(tA-A^T)$ is some power of $t$.
From this definition, one can see that $\galg(K)$ gives a lower bound on the topological $\Z$-slice genus of $K$: $A$ corresponds to a subsurface $S_A$ of $S$ with boundary $\partial S_A= J_A$, where $J_A$ has trivial Alexander polynomial, and so surgering $S$ along $J_A$ gives a $\Z$-slice surface for $K$ of genus $g$. See~\cite{FellerLewark_16} for details on  $\galg$.

\begin{proposition}\label{prop:Shake1Genus}
        Let $K$ be a knot with algebraic genus~$\galg(K)$. Suppose $\galg(K)\geq h > 0$.
        Then there exists a $\Z$-slice surface with genus~$h$ for the $1$-shaking of~$K$ with $2(g_{alg}(K) - h) + 1$-components.
        If $K$ has trivial Arf invariant, then the statement also holds for $h=0$, i.e.~there exists a $\Z$-slice genus~$0$ surface for a $2g_{alg}(K) + 1$-component $1$-shaking of~$K$.
        \end{proposition}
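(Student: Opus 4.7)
The plan is to combine the standard upper bound argument $g_4^{\Z}(K)\leq g_{\operatorname{alg}}(K)$ with a handle-trading move which exchanges one genus of the resulting $\Z$-slice surface for two additional boundary components parallel to $K$, and to identify the mod-$2$ obstruction to carrying out the last such trade with the Arf invariant.

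First, I would realize $g_{\operatorname{alg}}(K)$ geometrically. Pick a Seifert surface $S\subset S^3$ of some genus $g$ together with a symplectic basis of $H_1(S)$ for which the Seifert matrix decomposes as $\left(\begin{smallmatrix} A & B \\ B^{\transpose} & C\end{smallmatrix}\right)$ with $A$ Alexander trivial of size $2(g-g_{\operatorname{alg}}(K))$ and $C$ of size $2g_{\operatorname{alg}}(K)$. Geometrically this gives a decomposition $S=S_A\cup_{J_A} S_C$, where $S_A$ is a subsurface of genus $g-g_{\operatorname{alg}}(K)$ whose boundary $J_A$ is a knot in $S^3$ with $\Delta_{J_A}(t)=1$, and $S_C$ is the complementary subsurface of genus $g_{\operatorname{alg}}(K)$, cobounded by $K$ and $-J_A$. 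By~\cite[Theorem~7]{Freedman:1984-1} the knot $J_A$ bounds a locally flat embedded disc $D_A\subset D^4$ whose complement has infinite cyclic fundamental group. Pushing $S_C$ into $D^4$ and capping its $J_A$-boundary with $D_A$ yields a $\Z$-slice surface $\Sigma\subset D^4$ for $K$ of genus exactly $g_{\operatorname{alg}}(K)$; combined with the core of the $2$-handle of $X_1(K)$ this gives the case $h=g_{\operatorname{alg}}(K)$, in which the $1$-shaking is just $K$ itself.

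Next, for $h<g_{\operatorname{alg}}(K)$, I would iterate the following move $k:=g_{\operatorname{alg}}(K)-h$ times. Choose a non-separating simple closed curve $\gamma$ in the interior of $\Sigma$ together with a band in $\Sigma$ joining $\gamma$ to $\partial\Sigma=K$. Cutting $\Sigma$ along a thickening of this $\gamma$-with-band (equivalently, removing an annular neighbourhood of $\gamma$ after an isotopy that pushes $\gamma$ arbitrarily close to $K$ in $S^3$ along the band) reduces the genus of $\Sigma$ by $1$ and introduces two new boundary components, each isotopic in $S^3$ to a push-off of $K$. By choosing $k$ such curves with disjoint accompanying bands, simultaneously, one obtains a locally flat embedded surface $\Sigma'\subset D^4$ of genus $h$ bounded by $K$ together with $2k=2(g_{\operatorname{alg}}(K)-h)$ further parallel copies of $K$ in $S^3$. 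The complement of $\Sigma'$ still has infinite cyclic fundamental group because the original $\Sigma$ did and each cut is a local modification.

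The main obstacle is framing bookkeeping. Each pair of new boundary components comes with a framing inherited from the normal direction to the cut band in $S^3$; for the collection to constitute the $1$-framed shaking $S_{2k+1,1}(K)$ the accumulated framing must equal $+1$ on every parallel copy. When $h>0$ any discrepancy can be absorbed by boundary-twisting against a remaining handle of $\Sigma'$, so the construction succeeds. When $h=0$ no such handle remains, and the total mod-$2$ framing defect over the $k=g_{\operatorname{alg}}(K)$ cuts turns out to coincide exactly with the sum $\sum_i \lk(a_i,a_i^+)\lk(b_i,b_i^+)\bmod 2$ appearing in the Seifert-form formulation of Proposition~\ref{prop:arf-definition}; that is, with $\Arf(K)$. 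Hence $\Arf(K)=0$ is precisely what is needed to complete the genus-zero case. This framing computation is the hard part of the argument and is where the $\Arf$-invariant obstruction of Proposition~\ref{prop:arf-obstruction-2} naturally re-enters the story from the Seifert-surface side.
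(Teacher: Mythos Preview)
Your approach is genuinely different from the paper's, which is purely algebraic. The paper takes a Seifert matrix $V$ for $K$ realising $\galg(K)$, uses Lemma~\ref{lem:SeifertShaking} to write down the Seifert form of the shaking $L=S_{2(\galg(K)-h)+1,1}(K)$ as $V\oplus\bigoplus_{j=1}^{\galg(K)-h}\left(\begin{smallmatrix}0&1\\1&0\end{smallmatrix}\right)$, and then performs explicit basis changes on this matrix to exhibit an Alexander-trivial subblock of size $2(m-h)\times 2(m-h)$. Appealing to~\cite[Theorem~1]{FellerLewark_16} then gives the $\Z$-slice surface of genus~$h$ for~$L$. The Arf invariant enters as the parity obstruction to making the relevant diagonal entries of a block of $V$ even; when $h>0$ one needs only $\galg(K)-h$ of them even, which can always be arranged, while for $h=0$ one needs all of them even, which is possible precisely when $\Arf(K)=0$.

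Your geometric construction has a gap at its central step. When you remove an annular neighbourhood of a non-separating curve $\gamma\subset\Sigma$, the two new boundary components are push-offs \emph{of $\gamma$}, not of~$K$. A non-separating curve on a surface with connected boundary cannot be isotoped within the surface to be boundary-parallel, since boundary-parallel curves separate. Your proposed fix---sliding $\gamma$ along a band toward $K$---does not change this: after any isotopy keeping $\gamma$ on $\Sigma$, it remains non-separating on~$\Sigma$ and hence not parallel to $K$ on~$\Sigma$. Concretely, if $\Sigma$ is a pushed-in genus-one Seifert surface for the trefoil and $\gamma$ a core curve, then $\gamma$ is an unknot in $S^3$, so $\gamma_\pm$ cannot be push-offs of the trefoil. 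You might hope that an ambient isotopy of $D^4$ crumples $\Sigma$ so that $\gamma$ lands in $S^3$ as a push-off of $K$ while remaining non-separating on $\Sigma$, but establishing that this can be done with the correct framing, while preserving $\pi_1\cong\Z$, is exactly the hard content of the proposition; your proposal asserts rather than proves it. The subsequent claims about absorbing framing defects into remaining handles and identifying the last obstruction with $\Arf(K)$ then rest on this unestablished picture. Repairing the approach would require controlling the Seifert-form data of the curves $\gamma$ and their framings relative to $K$, at which point one is essentially carrying out the paper's algebraic argument through a geometric lens.
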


Let $K$ be a knot satisfying the assumptions of Proposition~\ref{prop:Shake1Genus}. Thus, we obtain a $\Z$-slice surface~$S$ with boundary a $1$--shaking of~$K$ and genus~$h$. Now cap off~$S$ with parallel copies of the core of the $2$-handle of~$X_1(K)$ to obtain a closed $\Z$-slice surface of genus $h$ in $X_1(K)$, which represents a generator of~$H_2(X_1(K);\Z)$. We have shown the following corollary.

\begin{corollary}\label{cor:Shake1Genus}
        For a knot~$K$ with $\galg(K)\geq h>0$,
         the generator of~$H_2(X_1(K);\Z)$ can be represented by a locally flat genus $h$ surface whose complement is simply connected and that has geometric intersection number $2(\galg(K) -h)+1$ with the cocore of the 2-handle.
If $K$ has trivial Arf invariant, then the statement also holds for $h=0$.
\end{corollary}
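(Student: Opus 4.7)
My plan is to derive Corollary~\ref{cor:Shake1Genus} directly from Proposition~\ref{prop:Shake1Genus} via the capping-off construction already hinted at in the preceding discussion, and then verify simple connectivity of the complement by a Seifert--van Kampen argument. First, I would apply Proposition~\ref{prop:Shake1Genus} (with $0 < h \leq \galg(K)$, or with $h = 0$ under the extra assumption $\Arf(K) = 0$) to obtain a locally flat $\Z$-slice surface $S \subset D^4$ of genus $h$ whose boundary is the $N$-component 1-shaking $S_{N,1}(K)$, where $N = 2(\galg(K) - h) + 1$. Let $\bar{S} \subset X_1(K)$ denote the closed surface obtained by capping off each of the $N$ boundary components of $S$ with a parallel copy of the core of the 2-handle of $X_1(K)$.

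Second, I would dispatch the easy claims. The surface $\bar{S}$ has genus $h$, since capping only attaches disks. It meets the cocore $\{0\} \times D^2$ of the 2-handle transversely in $N$ points, one per parallel core, so the geometric intersection number equals $N = 2(\galg(K) - h) + 1$. Using the shaking orientation convention ($k+1$ components parallel to $K$ and $k$ antiparallel, with $N = 2k+1$), the algebraic intersection with the cocore is $(k+1) - k = 1$. Since the cocore generates $H_2(X_1(K), \partial X_1(K); \Z) \cong \Z$ and the intersection pairing with $H_2(X_1(K); \Z)$ is perfect, $[\bar{S}]$ generates $H_2(X_1(K); \Z)$.

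The main obstacle, and the only technical step, is showing that $W := X_1(K) \setminus \nu(\bar{S})$ is simply connected. For this I would apply Seifert--van Kampen to the decomposition $W = W_1 \cup W_2$, with $W_1 := D^4 \setminus \nu(S)$ and $W_2$ the complement of the $N$ parallel cores in the 2-handle. The $\Z$-slice hypothesis on $S$ gives $\pi_1(W_1) = \Z\langle \mu \rangle$ with $\mu$ a meridian of $S$. The piece $W_2$ deformation retracts to an $N$-punctured disk, giving $\pi_1(W_2) \cong F_N\langle m_1, \ldots, m_N \rangle$ on meridians of the cores. The common boundary $W_1 \cap W_2 \cong \partial D^2 \times (D^2 \setminus \bigsqcup_i \nu(z_i))$ has $\pi_1 \cong \Z \times F_N$, generated by a $1$-framed longitude $\ell$ of $K$ and meridians $m'_i$ of the shaking components. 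Under inclusion into $W_2$, the longitude $\ell$ bounds $D^2 \times \{\pt\}$ for $\pt \notin \{z_i\}$ and so dies, while $m'_i \mapsto m_i$. Under inclusion into $W_1$ we have $m'_i \mapsto \mu^{\epsilon_i}$ and $\ell \mapsto \lk(\ell, S) \cdot \mu = \bigl(\sum_i \epsilon_i\bigr) \mu = \mu$, using that $\ell$ and every $K_i$ are both $1$-framed pushoffs of $K$ (hence pairwise link once in $S^3$) together with $\sum_i \epsilon_i = (k+1) - k = 1$. The amalgamation thus forces $\mu = 1$ and then $m_i = \mu^{\epsilon_i} = 1$, yielding $\pi_1(W) = 1$.
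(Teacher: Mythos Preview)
Your proposal is correct and follows exactly the paper's approach: apply Proposition~\ref{prop:Shake1Genus} to get a $\Z$-slice surface for the shaking, then cap off with parallel cores of the 2-handle. The paper's own argument is the short paragraph immediately preceding the corollary and omits the simple-connectivity verification, so your Seifert--van Kampen computation (using that the $1$-framed longitude bounds in the handle while mapping to $\mu$ in the $\Z$-slice complement) fills in a detail the paper leaves to the reader.
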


Proposition~\ref{prop:Shake1Genus} and Corollary~\ref{cor:Shake1Genus} give an explicit bound on the genus and the number of points of intersection with the cocore of the 2-handle in terms of $\galg(K)$, a quantity accessible in terms of Seifert matrices, and which is bounded above by the 3-genus. In particular, this together with the next remark yield Propositions~\ref{prop:torus} and~\ref{prop:shaking-num} from the introduction.

\begin{remark}\label{rem:g4 bounds 2k+2g}
For a locally flat genus $g$ surface $F$ that represents a generator of $H_2(X_n(K);\Z)$ and that intersects the cocore of the 2-handle transversely $2k+1$ times, one has $\gs(K)\leq g+k$. Here $\gs(K)$ denotes the (topological) slice genus of a knot $K$.

Indeed, as in the proof of Lemma~\ref{lem:CocoreShaking}, we find a locally flat connected surface in $D^4$ with genus $g$ and boundary the shaking~$L=S_{2k+1,n}(K)$. Since $2k$ saddle/band moves turn $L$ into $K$, we find a genus $g+k$ locally flat surface in $D^4$ with boundary $K$. Hence $\gs(K)\leq g+k$.
\end{remark}

\begin{proposition}\label{prop:torus}
For every knot $K$ there exists a locally flat embedded torus in $X_1(K)$ that generates $H_2(X_1(K))$ and has simply connected complement. In particular, \[g^1_{\operatorname{sh}}(K) = \Arf(K) \in \{0,1\}.\]
\end{proposition}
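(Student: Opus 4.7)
The plan is to produce the desired torus in two cases according to the value of the algebraic genus $\galg(K)$, and then deduce the shake-genus equality from Corollary~\ref{cor:1iffArf}. If $\galg(K) \geq 1$, I would directly apply Corollary~\ref{cor:Shake1Genus} with $h=1$ to obtain a locally flat torus in $X_1(K)$ representing the generator of $H_2(X_1(K))$ with simply connected complement.

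If $\galg(K) = 0$, then $\Delta_K(t) = 1$, so by Freedman's theorem (as used in Example~\ref{ex:0}) the knot $K$ bounds a locally flat $\Z$-slice disk $D \subset D^4$. I would stabilize $D$ inside $D^4$ by tubing along an embedded arc $\gamma \subset D^4 \setminus D$ with endpoints on $D$: remove two small sub-disks of $D$ near the endpoints of $\gamma$ and glue in a standardly framed annulus along $\gamma$, producing a genus-$1$ surface $F_1 \subset D^4$ with $\partial F_1 = K$. The key claim is that $F_1$ remains $\Z$-slice, that is, $\pi_1(D^4 \setminus F_1) = \Z$; this follows from a van Kampen computation whose content is that the two new meridians coming from the tube get identified with the original meridian of $D$, causing the amalgamated product to collapse back to $\Z$. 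Capping off $F_1$ with the core of the $2$-handle of $X_1(K)$ then yields a torus $T$ representing the generator of $H_2(X_1(K))$; the $2$-handle attachment kills the remaining meridian (by the same argument as in the proof of Corollary~\ref{cor:Shake1Genus}), giving $\pi_1(X_1(K)\setminus T) = 1$.

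Combining both cases yields $g^1_{\operatorname{sh}}(K) \leq 1$ for every knot $K$. Since Corollary~\ref{cor:1iffArf} says $g^1_{\operatorname{sh}}(K) = 0$ if and only if $\Arf(K) = 0$, we conclude that $g^1_{\operatorname{sh}}(K) = \Arf(K) \in \{0,1\}$. I expect the main obstacle to be the van Kampen bookkeeping in the $\galg(K) = 0$ case: verifying that stabilizing the $\Z$-slice disk along a suitably framed arc preserves the $\pi_1 = \Z$ property of the complement. This computation closely parallels the $\pi_1$ analysis already underlying the proof of Corollary~\ref{cor:Shake1Genus}, so no fundamentally new techniques should be required.
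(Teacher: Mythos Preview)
Your proof is correct and follows essentially the same strategy as the paper: both arguments rest on Corollary~\ref{cor:Shake1Genus}, and both require a brief stabilisation step in the remaining case. The only real difference is the case split. The paper splits on $\Arf(K)$ rather than $\galg(K)$: when $\Arf(K)=1$ one has $\galg(K)\geq 1$ automatically (since $\galg(K)=0$ forces $\Delta_K=1$ and hence $\Arf(K)=0$), so Corollary~\ref{cor:Shake1Genus} with $h=1$ applies; when $\Arf(K)=0$, Corollary~\ref{cor:Shake1Genus} with $h=0$ already yields an embedded \emph{sphere} with simply connected complement, and one trivially stabilises this closed sphere to a torus inside $X_1(K)$. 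This split is marginally more economical than yours, because your $\galg(K)=0$ case is already subsumed by the $h=0$ clause of Corollary~\ref{cor:Shake1Genus}, so the direct appeal to Freedman's theorem plus an explicit disk-level stabilisation in $D^4$ is extra work you need not do. That said, your van Kampen claim is fine: a trivial local 1-handle addition to a $\Z$-slice disk is modelled by a pushed-in genus-one Seifert surface for the unknot, whose complement in $D^4$ has $\pi_1\cong\Z$, and the amalgamation over the meridian gives $\Z$ back. Your final deduction of $g^1_{\operatorname{sh}}(K)=\Arf(K)$ via Corollary~\ref{cor:1iffArf} matches the paper's, which likewise needs the Arf obstruction to rule out a sphere when $\Arf(K)=1$.
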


\begin{proof}
For knots with Arf invariant $1$, setting $h=1$ in Corollary~\ref{cor:Shake1Genus} provides a locally flat torus with simply connected complement representing a generator of $H_2(X_1(K);\Z)$. For knots with Arf invariant $0$, we even find a locally flat sphere whose complement is simply connected by setting $h=0$ in Corollary~\ref{cor:Shake1Genus}.
\end{proof}

\begin{proposition}\label{prop:shaking-num}
For a knot $K$ with  $\Arf(K)=0$ we have
\[2\gs(K)+1\leq \text{$1$-shaking number of $K$}\leq2\gZ(K)+1 =2\galg(K)+1\leq 2g_3(K)+1.\]
In particular, for each integer $k \geq 0$ there exists a $1$-shake slice knot $K_k$ such that the $1$-shaking number of $K_k$ is exactly $2k+1$.
\end{proposition}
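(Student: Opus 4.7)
The plan is to assemble the sandwich from Corollary~\ref{cor:Shake1Genus} and Remark~\ref{rem:g4 bounds 2k+2g}, and then to exhibit explicit realizing knots by taking iterated connected sums of a single base knot.

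For the upper bound, I would apply Corollary~\ref{cor:Shake1Genus} with $h=0$, which is permitted precisely because $\Arf(K)=0$: the corollary supplies a locally flat sphere representing a generator of $\pi_2(X_1(K))$ that meets the cocore of the $2$-handle transversely in $2\galg(K)+1$ points. Thus $\text{$1$-shaking number of $K$}\leq 2\galg(K)+1$. The equality $\gZ(K)=\galg(K)$ is the main theorem of~\cite{FellerLewark_19}, and $\galg(K)\leq g_3(K)$ is immediate from the definition (take the full Seifert form with no Alexander-trivial block). For the lower bound, let $S$ be an embedded sphere representing a generator of $H_2(X_1(K);\Z)$ meeting the cocore in $2k+1$ points; Remark~\ref{rem:g4 bounds 2k+2g} with $g=0$ then yields $\gs(K)\leq k$, so $2\gs(K)+1\leq 2k+1$.

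For the realization statement, observe that if a knot $J$ satisfies $\Arf(J)=0$ and $\gs(J)=\gZ(J)=k$, then the sandwich forces the $1$-shaking number of $J$ to equal exactly $2k+1$. For $k=0$ the unknot works; for $k\geq 1$, set $K_k$ to be the connected sum of $k$ copies of the alternating knot $7_4$. Then $\Arf(7_4)=0$ since $\Delta_{7_4}(-1)=-15\equiv 1\pmod 8$, while $\sigma(7_4)=-2$ and $\deg\Delta_{7_4}=2$. Using additivity of the Arf invariant, the signature, and the degree of the Alexander polynomial under connected sum, together with the standard chain of inequalities $|\sigma|/2\leq \gs\leq \gZ\leq \deg(\Delta)/2$ (the last from~\cite{MR3523068}), I obtain
\[
k=\tfrac{|\sigma(K_k)|}{2}\leq \gs(K_k)\leq \gZ(K_k)\leq \tfrac{\deg(\Delta_{K_k})}{2}=k,
\]
so all of $\gs(K_k)$ and $\gZ(K_k)$ equal $k$ and $\Arf(K_k)=0$, whence the $1$-shaking number of $K_k$ is exactly $2k+1$.

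There is no substantive obstacle in this proof: the deep content is packaged in Corollary~\ref{cor:Shake1Genus}, which in turn rests on Proposition~\ref{prop:Shake1Genus}. The only point requiring any thought is picking a base knot (here $7_4$) with $\Arf=0$ and $\gs=\gZ=1$, so that iterated connected sums cover every $k\geq 1$; the signature bound ensures $\gs$ does not collapse under connected sum, which is the delicate detail given that additivity of the topological slice genus is not known in general.
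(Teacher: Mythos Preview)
Your proof is correct and follows essentially the same approach as the paper: both derive the upper bound from Corollary~\ref{cor:Shake1Genus} with $h=0$, the lower bound from Remark~\ref{rem:g4 bounds 2k+2g} with $g=0$, and realize each value $2k+1$ via a $k$-fold connected sum of a base knot with $\Arf=0$ and signature $\pm 2$. The only cosmetic difference is that the paper uses $5_2$ and the chain $|\sigma|/2\leq \gs\leq g_3$, whereas you use $7_4$ and the chain $|\sigma|/2\leq \gs\leq \gZ\leq \deg(\Delta)/2$; both choices work for the same reason.
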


\begin{proof}
Corollary~\ref{cor:Shake1Genus} gives a sphere that has geometric intersection number $2(\galg(K))+1$ with the cocore of the 2-handle. Hence the $1$-shake slice number of knots with Arf invariant $0$ is less than or equal to $2\galg(K)+1$. We already know that $\gZ(K) = \galg(K) \leq g_3(K)$ from~\cite{FellerLewark_19}. 
To obtain the first inequality, note that for any 2-sphere that realises the $1$-shake slice number $2k+1$, we have  $\gs(K)\leq 0+ k = k$  by Remark~\ref{rem:g4 bounds 2k+2g}.

For the second sentence, take $K_k$ to be an Arf invariant 0 knot with $\gs(K)=g_3(K)=k$. For example, let $K_k$ be the $k$-fold connected sum of $5_2$, the twist knot with $5$ crossings. This satisfies $|\sigma(K)|=2g_3(K)=2$, and thus $\gs(K)=g_3(K)=1$ by the Murasugi-Tristram inequality relating the signature and the slice genus.
\end{proof}

Next we describe a Seifert surface (and the corresponding Seifert matrix) for~$S_{2k+1,n}(K)$. For this, one could iterate a construction of Tristram~\cite[Definition~3.1]{tristram} and extract the Seifert form from his proof of signature invariance~\cite[Theorem~3.2]{tristram}, but, for the convenience of the reader, we give an argument in Lemma~\ref{lem:SeifertShaking} below.
Although, we will only need the case $n=1$ here, we give the general statement for future reference.

For any $k$ and $n$, $S_{2k+1,n}$ is a winding number~$1$ pattern, and so we
can construct a Seifert surface~$F_{2k+1,n}$
for $S_{2k+1,n}(K)$ as the union of two pieces. The first is a surface in the solid torus identified with $\nu(K)$, and illustrated in Figure~\ref{fig:SeifertOfS}. The second is a Seifert surface for $K$ outside a tubular neighbourhood~$\nu K$.
\begin{figure}
\includegraphics{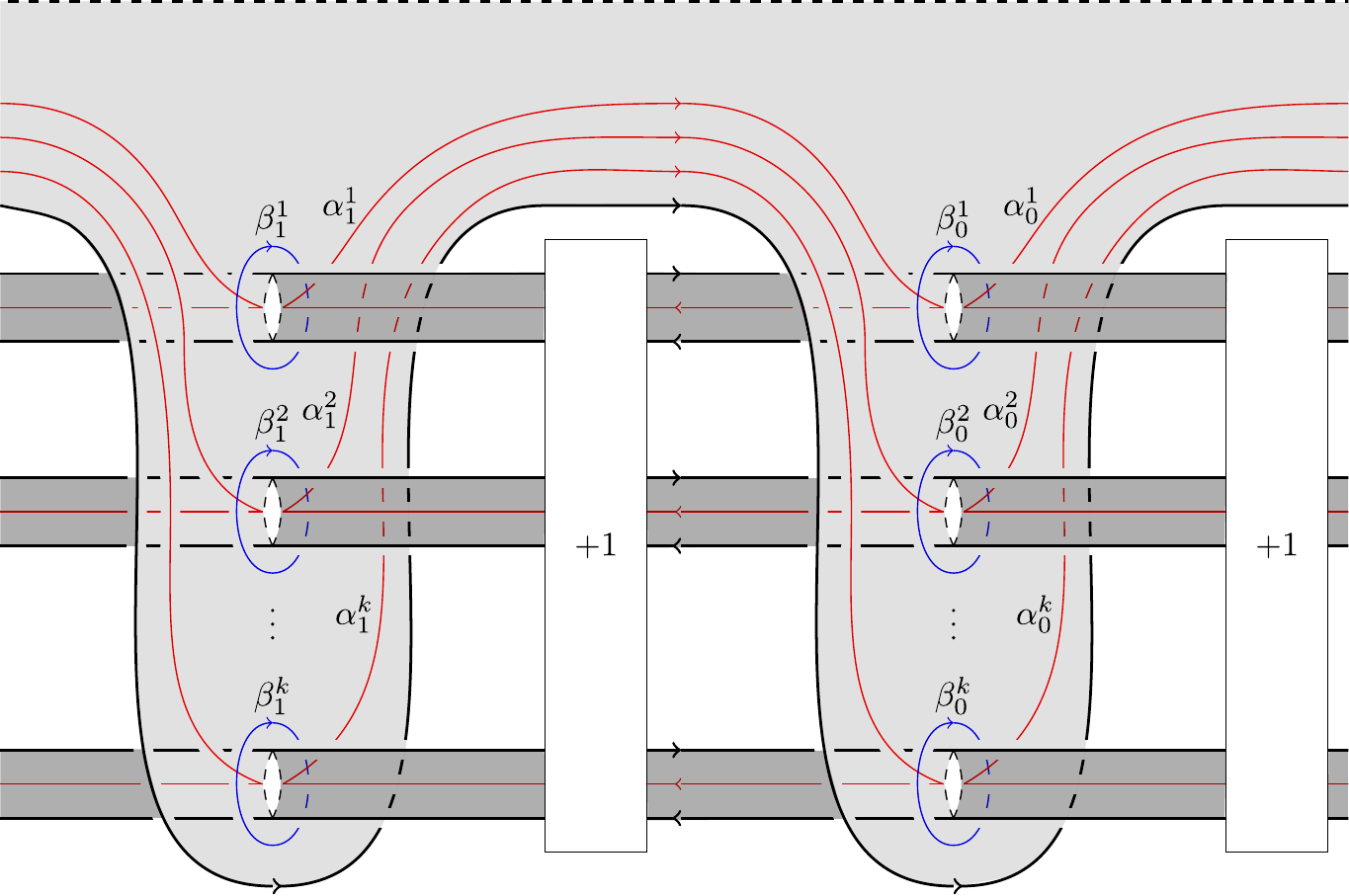}
    \caption[A portion of the Seifert surface~$F_{2k+1,n}$ for $n = 2$]{The portion of the Seifert surface~$F_{2k+1,n}$ contained in $\nu(K)$, drawn for $n = 2$ and $k=3$. Each $+1$-box denotes a positive full twist on $2k$ strands. The left and right edges of the figure are identified.  The figure also shows, in red and blue, curves on the Seifert surface forming part of a generating set for $H_1(F_{2k+1,n};\Z)$.  The dashed line on the top is glued to a Seifert surface for $K$.}
	\label{fig:SeifertOfS}
\end{figure}

Let~$Z_n \in \GL(n,\Z)$ be the permutation matrix corresponding to the cyclic permutation~$\sigma = (1 \ldots n)$, with  entries are given by $(Z_n)_{i,j}=\delta_{i, \sigma(j)}$. That is,
\[
Z_n = 	\begin{pmatrix}
		0 & 0 & \ldots & 0 & 1\\
		1 & 0 & \ldots & 0 & 0\\
		0 & 1 & \ldots & 0 & 0\\
		\vdots && \ddots & & \vdots\\
		0 & 0 & \ldots  & 1 & 0
	\end{pmatrix}.
\]
\begin{lemma}\label{lem:SeifertShaking}
	Let $V$ be a Seifert matrix for the knot~$K$.
	The Seifert surface~$F_{2k+1,n}$ for the $(2k+1)$-component $n$-shaking of $K$ depicted in Figure~\ref{fig:SeifertOfS} has Seifert form given by the matrix
	\begin{equation}\label{eq:SFofShake}
	V\oplus \bigoplus_{j=1}^k \begin{pmatrix} 0 &\id_n \\ Z_n & 0 \end{pmatrix}.
\end{equation}
\end{lemma}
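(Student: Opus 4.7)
The proof is a direct computation of the Seifert form in an explicit basis for $H_1(F_{2k+1,n};\Z)$. Here is my plan.

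First I would decompose $F_{2k+1,n}$ as a boundary-connected sum $F_K \natural F'$, where $F_K$ is a Seifert surface for $K$ realising $V$, placed outside $\nu K$, and $F'$ is the portion of $F_{2k+1,n}$ contained in $\nu K$ that is depicted in Figure~\ref{fig:SeifertOfS}. Since the intersection $F_K\cap F'$ is an arc, a Mayer--Vietoris argument yields $H_1(F_{2k+1,n};\Z)\cong H_1(F_K;\Z)\oplus H_1(F';\Z)$. A basis for the first summand is the one realising $V$. For the second summand, I would take the basis $\{a_j^{(i)}, b_j^{(i)}: 1\le i\le n,\, 1\le j\le k\}$ indicated by the red and blue curves of Figure~\ref{fig:SeifertOfS}: the $a_j^{(i)}$ are short loops encircling the $i$-th twist box in the $j$-th paired band, and the $b_j^{(i)}$ are longitudinal arcs traversing between consecutive twist boxes along the $j$-th band.

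For the block-diagonal structure, the key observation is that every $a_j^{(i)}$ and every $b_j^{(i)}$, together with their positive push-offs with respect to the Seifert framing, can be arranged to lie in $\nu K$, whereas the basis curves for $H_1(F_K;\Z)$ and their positive push-offs lie in the complement of $\nu K$. Linking numbers between curves in these two disjoint regions therefore vanish, and the Seifert form restricted to $H_1(F_K;\Z)$ tautologically recovers $V$.

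It then remains to compute the four $n\times n$ blocks of linking numbers among the $a$- and $b$-curves within a fixed $j$, and to check that curves from different paired bands $j\neq j'$ have vanishing linking (which follows since the $j$-th and $j'$-th pieces of $F'$ are supported in disjoint sub-cylinders of $\nu K$). Inspection of Figure~\ref{fig:SeifertOfS} gives $\operatorname{lk}(a_j^{(i)},(a_j^{(i')})^+)=0$ and $\operatorname{lk}(b_j^{(i)},(b_j^{(i')})^+)=0$, since each of these two families is supported in a union of disjoint disks on $F'$ on which the Seifert push-off is already disjoint from the surface. The pairing $\operatorname{lk}(a_j^{(i)},(b_j^{(i')})^+)$ yields $\delta_{i,i'}$, because the positive push-off of $b_j^{(i')}$ links the meridional loop $a_j^{(i)}$ exactly once when $i=i'$ and is disjoint from it otherwise. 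The interesting block is $\operatorname{lk}(b_j^{(i)},(a_j^{(i')})^+)$: here the positive Seifert push-off of $a_j^{(i')}$ is displaced across a single $+1$ full twist on the $2k$ strands, which cyclically shifts the strand index by one modulo $n$; comparing this to the linking pattern of $b_j^{(i)}$ produces precisely the entries $(Z_n)_{i,i'}=\delta_{i,\sigma(i')}$.

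The main obstacle will be sign and orientation bookkeeping: verifying that the twist boxes and the choice of positive normal to $F'$ combine to give the permutation $Z_n$ rather than $Z_n^{-1}$, and that no extra signs appear from the alternating orientations of the $2k+1$ push-off components. Once the conventions are fixed, the computation is entirely local to a neighbourhood of a single twist box and is visible directly from the figure.
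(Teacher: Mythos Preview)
Your approach is essentially the paper's: choose an explicit basis, observe that the $F_K$ part and the $F'$ part contribute orthogonal summands, and compute linking numbers directly in each block. The splitting into $V$ and the $n\times n$ blocks, and the identification of the $\id_n$ block, match the paper's argument.

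There is one genuine gap. Your justification that curves from distinct indices $j\neq j'$ have vanishing linking, namely that ``the $j$-th and $j'$-th pieces of $F'$ are supported in disjoint sub-cylinders of $\nu K$'', is false as stated: in the surface of Figure~\ref{fig:SeifertOfS} all $2k$ bands pass through every one of the $n$ twist boxes together, so the pieces for different $j$ are emphatically not separated. The paper therefore does not argue via disjoint supports but instead checks the cross-terms case by case. In particular, for $\alpha_i^j$ and $\alpha_i^{j'}$ with the same $i$ but $j\neq j'$, the paper uses that the positive push-off $(\alpha_i^j)^+$ is isotopic to $(\alpha_i^{j'})^+$ in $S^3\setminus\alpha_i^j$ together with the already-established vanishing of the self-linking of $\alpha_i^j$. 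Similar direct inspections handle the $\beta$--$\beta$ and mixed $\alpha$--$\beta$ cross-terms. You should replace the disjoint-sub-cylinders claim with these explicit checks; once you do, the rest of your outline goes through, up to the orientation bookkeeping you already flagged (and indeed the paper records $\lk(\beta_i,\alpha_{i'}^+)=1$ iff $i'=\sigma(i)$, which you should reconcile with your formula $(Z_n)_{i,i'}=\delta_{i,\sigma(i')}$).
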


\begin{proof}
	The Seifert surface~$F_{2k+1,n}$ agrees with a Seifert surface~$S$ for $K$ outside a tubular neighbourhood~$\nu K$.
	We compute the Seifert form~$(x,y) \mapsto \lk(x, y^+)$ of $F_{2k+1,n}$. The generators of~$H_1(F_{2k+1,n}; \Z)$ are given by the generators~$v_i$ of~$S$, and the additional generators~$\alpha_i^j$ and $\beta_i^j$, depicted in Figure~\ref{fig:SeifertOfS}, where $i = 0, \ldots, n-1$ and $j=1, \ldots, k$.

Denote the span $\langle v_1, \ldots, v_{2g} \rangle$ by~$C$, where $g$ is the genus of the Seifert surface of~$K$ used to construct~$F_{2k+1,n}$. Also, for each $j=1, \dots, k$ denote the span~$\langle \alpha_i^j, \beta_i^j\,|\, i = 0, \ldots, n-1\rangle$ by $D_j$. We say that two, possibly non-distinct curves $\gamma_1$ and $\gamma_2$ are \emph{orthogonal} if $\lk(\gamma_1, \gamma_2^+)= 0= \lk(\gamma_1^+, \gamma_2)$.

Examination of $F_{2k+1,n}$ gives us the following vanishing linking numbers.
\begin{enumerate}
\item For any $i,j,k$ the curve $v_k$ is orthogonal to $\alpha^j_i$  and $\beta^j_i$ since there exists a 3-ball containing each  $\alpha^j_i$ or $\beta^j_i$ which is disjoint from the surface $S$.
\item For any $i, i', j, j'$ we have that $\beta^j_i$ and $\beta^{j'}_{i'}$ are orthogonal.
\item For $i \neq i'$ and arbitrary $j, j'$ we have that $\alpha^j_i$ and $\alpha^{j'}_{i'}$ are orthogonal.
\item For arbitrary $i$, $j$ we have that $\lk( \alpha^j_i, (\alpha^j_i)^+)=0$ i.e.\ $\alpha^j_i$ is self-orthogonal.
\item For arbitrary $i, j, j'$ we have that $\alpha^j_i$ and ${\alpha^{j'}_{i}}$ are orthogonal, since ${\alpha^j_i}^+$ is isotopic to any ${\alpha^{j'}_{i}}^+$ in $S^3 \sm \alpha^j_i$ and the self-linking of $\alpha^j_i$ vanishes.
\item For $j \neq j'$ and arbitrary $i, i'$ we have
$\lk\big(\alpha^j_i, {\beta^{j'}_{i'}}^+\big) = \lk\big(\beta^j_i, {\alpha^{j'}_{i'}}^+\big) =0.$
\end{enumerate}

	In particular, we have that the first homology splits as an orthogonal sum
	\[ H_1(F_{2k+1,n};\Z) = C \oplus \bigoplus_{j=1}^k D_j, \]
	where the Seifert form on~$C$ is given by the Seifert matrix~$V$ of $K$ with respect to the basis~$v_1, \ldots, v_{2g}$.

	What remains is to compute the Seifert form on $D_j$. Fix a~$j$, and from now on suppress the index~$j$ from the notation.  Recalling that $\sigma$ is the cyclic permutation~$(1 \ldots n)$, we compute
	\[ \lk(\alpha_i, \beta_{i'}^+) = \begin{cases}
			1 & i = i',\\
			0 & \text{otherwise}
		\end{cases}
\;\;\text{  and  }\;\;
	 \lk(\beta_{i},\alpha_{i'}^+) = \begin{cases}
		1 & i' = i+1 = \sigma(i),\\
		0 & \text{otherwise.}
		\end{cases}\]
		This shows that the Seifert form for~$F_{2k+1,n}$ is indeed represented by the matrix in the statement of the lemma.
\end{proof}

\begin{proof}[Proof of Proposition~\ref{prop:Shake1Genus}]
The case~$g_{alg}(K)=0$ follows from the result of Freedman and Quinn that $\Delta_K(t)=1$ if and only if $K$ is the boundary of a $\Z$-slice disc~\cite[Theorem~7]{Freedman:1984-1},\cite[11.7B]{FQ}, since, by definition, a knot $K$ has algebraic genus~$0$ if and only if $\Delta_K(t)=1$.
So, we consider the case~$g:=g_{alg}(K)>0$.

Let $V$ be a $2m\times 2m$ Seifert matrix of $K$ that realises the algebraic genus of $K$ in the following way: the top left square block $P$ of $V$ of size $2(m-g)\times2(m-g)$ is Alexander trivial, that is $\det(tP-P^T)=t^{m-g}$. 
Additionally, arrange that the anti-symmetrisation of $V$ is a direct sum of $P-P^T$ and the standard $2g\times 2g$ symplectic form. Denoting the lower right $2g\times 2g$ square block of $V$ by $B$, the latter amounts to requiring that
\[B = \begin{pmatrix} S & A+\id_g \\ A^T & * \end{pmatrix},\]
where $A$ is a $g\times g$ matrix and $S$ is a $g\times g$ symmetric matrix.

By applying a base change that preserves the intersection form~$B-B^T$, we can and do arrange for the first $g-h$ diagonal entries of $S$ to be even. To see this, note that one easily arranges for all but at most one diagonal entries of $S$ to be even, and one may further arrange for the last entry to be even if and only if the Arf invariant is trivial. This can be checked using the formula for the Arf invariant in terms of a symplectic basis~\cite[Section~10, p.~105]{Lickorish:1993-1} and is only needed in the case that $h=0$.

Let $L$ be the $2(g-h)+1$-component $1$-shaking of $K$.
With everything set up as above, we now look at the $2(m+g-h)$-Seifert matrix $M$ of $L$ given by
$V\oplus \bigoplus_{j=1}^{g-h} \begin{pmatrix} 0 &1 \\ 1 & 0 \end{pmatrix}$ by~\eqref{eq:SFofShake}.
To establish that $L$ admits a $\Z$-slice surface of genus $h$, it suffices to find a $2(m-h)\times 2(m-h)$ Alexander trivial subblock of $M$ by~\cite[Theorem~1]{FellerLewark_16}.

For this, perform the base change corresponding to adding the basis element~$e_{2m+2l-1}$ to the basis element $e_{2(m-g)+l}$ for each~$l=1,\ldots, g-h$.
This corresponds to changing both entries $M_{2(m-g)+l,2(m+l)}$ and $M_{2(m+l),2(m-g)+l}$ from $0$ to $1$. The result is the following Seifert matrix for $L$, which we again denote by $M$.
\begin{center}
{\fontsize{8pt}{8pt}\selectfont
\renewcommand\arraystretch{1.125}
\begin{align*}
\left[\begin{array}{c|ccccc|ccccc|ccccc}
\hspace{-0.2cm}\smash{\overbrace{\hspace{0.2cm}P\hspace{0.2cm}}^{2(m-g)}}\hspace{-0.4cm}\phantom{A}&&&\hspace{-1.2cm}\smash{\overbrace{\hspace{1.2cm}C\hspace{1.2cm}}^{g}}\hspace{-1.4cm}\phantom{A}&&&&&\hspace{-1.0cm}\smash{\overbrace{\hspace{0.8cm}\phantom{A}*\phantom{A}\hspace{0.9cm}}^{g}}\hspace{-1.4cm}\phantom{A}&&&&&\hspace{-1.15cm}\smash{\overbrace{\hspace{1.2cm}0\hspace{1.15cm}}^{2(g-h)}}\hspace{-1.65cm}\phantom{A}&&\\
\hline
&           &&&&     &        &&&&&                     0&1&&&\\
&           &&&&&        &&&&&                     &&0&1&\\
C^T&        &&S&&&       &A&+&\mathrm{Id}_g&&       &&&&\ddots\\
&           &&&&&        &&&&&                     &&&&\\
&           \phantom{0}&\phantom{0}&\phantom{0}&\phantom{1}&\phantom{0}&        &&&&&                     &&&&\phantom{\ddots}\\
\hline
&           &&&&&        &&&&&                     &&&&\\
&           &&&&&        &&&&&                     &&&&\\
*&          &&A^T&&&     &&\hspace{0.1cm}*\hspace{-0.3cm}\phantom{A}&&&       			    &&\hspace{0.2cm}0\hspace{-0.4cm}\phantom{A}&&\\
&           &&&&&        &&&&&                     &&&&\\
&           &&&&&        &&&&&                     &&&&\\
\hline
&           0&&&&&       &&&&&                     0&1&&&\\
&           1&&&&&       &&&&&                     1&0&&&\\
0& &0&&&&       &&\hspace{0.1cm}0\hspace{-0.3cm}\phantom{A}&&&       			  &&0&1&\\
&           &1&&&&       &&&&&                     &&1&0&\\
&           &&\ddots&&        &&&&&                     &&&&&\ddots\\
\end{array}\right]
\end{align*}
}\end{center}
By adding multiples of the $(2m+2l)^{th}$ basis element appropriately to the first $2m-h$ basis elements, for $l=1,\dots, g-h$,
we can arrange that the first $g-h$ rows and columns of $C$, $A$, and $S$ become $0$, while no other entries in the first $2m-h$ rows and columns of $M$ are modified. This is possible since $S$ is symmetric and its first $g-h$ diagonal entries are even. We keep referring to the resulting matrix by~$M$ and conclude the proof by noting that the sub-block $M_\text{sub}$ of $M$ corresponding to the sub-basis
\[(\underbrace{e_1,\ldots,e_{2(m-g)}}_{2m-2g},\underbrace{e_{2(m-g)+1},\ldots,e_{2(m-g)+g-h}}_{g-h},\underbrace{e_{2(m-g) + g +1},\ldots,e_{2m-h}}_{g-h})\]
is Alexander trivial.

In other words, the block~$M_\text{sub}$ is the $2(m-h)\times 2(m-h)$--matrix that is obtained from $M$ by deleting the rows and columns~$2(m-g)+g-h+1$ through $2m-g$ and deleting all rows and columns with index~$2m-h+1$ or larger. This submatrix is of the form
\[M_\text{sub}=\begin{pmatrix} P & 0 &* \\ 0 & 0 & \id_{g-h}\\ *& 0&* \end{pmatrix}, \]
and so satisfies~$\det(tM_\text{sub}-M_\text{sub}^T)=\det(tP-P^T)t^{g-h}=t^{m-h}$.
\end{proof}

\bibliographystyle{alpha_labelfix}
\bibliography{research}
\end{document}